\documentclass[a4paper,10pt]{article}
\usepackage[T2A]{fontenc}
\usepackage[english]{babel}

\usepackage{graphicx}
\usepackage{amsthm, amsmath, amssymb, amsbsy, amsfonts, dsfont}
\usepackage{srcltx, color}

\def\tht{\theta}
\def\Om{\Omega}
\def\om{\omega}
\def\e{\varepsilon}
\def\g{\gamma}

\def\l{\lambda}
\def\p{\partial}

\def\vk{\kappa}

\def\a{\alpha}
\def\b{\beta}

\def\d{\delta}
\def\L{\Lambda}
\def\z{\zeta}
\def\vp{\varphi}

\def\vt{\vartheta}

\def\la{\langle}
\def\ra{\rangle}

\def\iu{\mathrm{i}}

\def\cM{\mathcal{M}}
\def\cL{\mathcal{L}}

\def\cP{\mathcal{P}}

\def\Ups{\Upsilon}

\def\cH{\mathcal{H}}

\def\cL{\mathcal{L}}

\DeclareMathOperator*{\esssupp}{ess\,supp} 

\DeclareMathOperator{\supp}{supp}
\DeclareMathOperator{\RE}{Re}
\DeclareMathOperator{\IM}{Im}
\DeclareMathOperator{\dist}{dist}

\DeclareMathOperator{\mes}{mes}
\DeclareMathOperator{\diam}{diam}

\numberwithin{equation}{section}

\newtheorem{theorem}{Theorem}[section]
\newtheorem{lemma}{Lemma}[section]

\begin{document}

\allowdisplaybreaks
\title{Nonlocal convolution type functionals and related Orlicz spaces}

\date{\empty}

\author{D.I. Borisov$^{1,2}$, A.L. Piatnitski$^{3,4}$}

\maketitle
{\small
 \begin{quote}
1) Institute of Mathematics, Ufa Federal Research Center, Russian Academy of Sciences, Chernyshevsky str. 112, Ufa, Russia, 450008
\\
2) Peoples Friendship University of Russia (RUDN University), 6 Miklukho--Maklaya Street, Moscow, 117198, Russian Federation
\\
3) The Arctic University of Norway, campus Narvik, PO Box 385, N-8505 Narvik, Norway
\\
4) Higher School of Modern Mathematics MIPT, 1 Klementovski per., Moscow, Russia
\\[2mm]
Emails: borisovdi@yandex.ru, apiatnitski@gmail.com
\end{quote}}

{\small \begin{quote}
 \noindent{\bf Abstract.}   In the paper we introduce Orlicz type functional spaces defined in terms of nonlocal convolution
 type integral functionals and study the main properties of these spaces.  We show in particular that, under natural convexity
 and growth conditions on the integrand, the corresponding spaces are Banach and separable. We also characterize the dual spaces and provide a number of examples.

 \medskip

 \noindent{\bf Keywords:}  Orlicz space;  convolution type functional; functional with variable growth condition

 \medskip

 \noindent{\bf Mathematics Subject Classification:} 46E30
 \end{quote}}

\section{Introduction}\label{s_intro}

The main goal of this paper is to introduce and study the 
functional spaces defined in terms of nonlocal
``convolution type'' integral functionals 
\begin{equation}\label{intr_functional}
F(u)=\int\limits_{\Omega\times\Omega}a(x-y) \vp(|u(x)-u(y)|,x,y)\,dxdy,
\end{equation}
which include the important particular case
\begin{equation}\label{intr_part_case}
F_{p(\cdot)}(u)=\int\limits_{\Omega\times\Omega}\frac{a(x-y) b(x,y)}{p(x,y)} |u(x)-u(y)|^{p(x,y)}\,dxdy.
\end{equation}
Here $\Omega$ is 
a regular domain in $\mathds{R}^d$ that can be bounded or unbounded including the case, when $\Omega$ coincides with $\mathds{R}^d,$
and $a(z)$ is a non-negative integrable function.  
The function $\phi(z,x,y)$ is assumed to be non-negative and
strictly convex in the first variable and to  satisfy appropriate growth conditions that can vary from point to point;
the detailed list of conditions is formulated in the next section.
For the  particular case \eqref{intr_part_case}, these conditions are satisfied if $b$ is a bounded  positive measurable function and $p(x,y)$ is a measurable function obeying the inequality $1<p_-\leqslant p(\cdot)\leqslant p_+.$


Variational nonlocal convolution type functionals with variable growth conditions is a very interesting mathematical topic.
It is closely related to the theory of  convolution type functionals with variable growth conditions
and the corresponding stationary and evolution equations.
Studying both qualitative and asymptotic properties of these operators and functionals is an important and challenging mathematical task.


The interest to the nonlocal convolution type functionals and operators is also motivated  by numerous applications in material
sciences and biology.   They are widely used for describing various processes in population dynamics, mechanics of porous media, chemistry of polymers, and other fields, where the non-locality of interactions plays an essential role.
For instance, in the so-called contact
model in population dynamics the evolution of the density of population is characterized by the equation
$$
\partial_t v(x,t)=\int\limits_{\mathds{R}^d} a(x-y)b(x,y) \big(v(y,t)-v(x,t)\big)dy.
$$
The quadratic form corresponding to the operator on the right-hand side here is given by
\eqref{intr_part_case} with $\Omega=\mathds{R}^d$ and $p(\cdot)=2$. The description of the contact model and
its basic properties can be found in \cite{KKP}.
 In order to provide a more accurate description one should consider
the models that also account for the nonlinear effects. In this case the corresponding variational functional is
not quadratic any more and might be of the form \eqref{intr_part_case} or even \eqref{intr_functional}.

In the modern theory of porous media, the modeling of many phenomena is performed by taking into account the non-locality and non-linearity of interactions in the medium.
Moreover,  the rheological properties of the media can vary from point to point.
Mathematical description of these phenomena relies on nonlinear convolution type equations; the example of such equation is
$$
\partial_t v(x,t)=\int\limits_{\mathds{R}^d} a(x-y)b(x,y) \big(v(y,t)-v(x,t)\big)\big|v(y)-v(x)\big|^{p(x,y)-2}dy,
$$
and the variational functional corresponding to the operator on the right-hand side coincides with that in
\eqref{intr_part_case}.

The local functionals and differential operators with variable growth conditions have been intensively studied
during last thirty years, see, for instance,  \cite{DHHR11}, \cite{HHLN10}, \cite{Ru00} and references therein.
We recall that in order to investigate elliptic PDEs with variable exponents of the form
\begin{equation}\label{perem_diff}
-\operatorname{div}
A(x)|\nabla v(x)|^{p(x)-2}\nabla v(x)
+|v(x)|^{p(x)-2}v(x)=g \quad\text{in}\quad\mathds{R}^d,
\end{equation}
where $p(x)$ is a measurable function satisfying the estimate $1<p_-\leqslant p(\cdot)\leqslant p_+$ and $A(x)$ is a positive definite matrix  function,
it is natural to introduce the Sobolev spaces with variable exponent $W^{1,p(\cdot)}(\mathds{R}^d)$ defined as the set of measurable functions $v(x)$ such that
$$
\int\limits_{\mathds{R}^d}|v(x)|^{p(x)}dx<+\infty, \qquad \int\limits_{\mathds{R}^d}|\nabla v(x)|^{p(x)}dx<+\infty.
$$
   It was shown in \cite{KoRa,FaZh,Di04} that $W^{1,p(\cdot)}(\mathds{R}^d)$ equipped with the Luxemburg norm is a
  separable Banach space. Moreover, $C_0^\infty(\mathds{R}^d)$ is dense in $W^{1,p(\cdot)}(\mathds{R}^d)$
  if $p(x)$ is $\log$-continuous. The latter result was obtained in \cite{EdRa92,Sa00}  and then improved in \cite{Zhi06}. The Lavrentiev phenomenon was discussed in \cite{Zh1}.

Having this space at hand one can prove that equation  \eqref{perem_diff} is well posed in $W^{1,p(\cdot)}(\mathds{R}^d)$, that is for any $f\in L^{p^*(\cdot)}(\mathds{R}^d)$ equation  \eqref{perem_diff} has
a solution $v\in W^{1,p(\cdot)}$, the solution is unique and satisfies the estimate
$$
\|v\|_{W^{1,p(\cdot)}(\mathds{R}^d)}\leqslant C\|f\|_{L^{p^*(\cdot)}},\qquad
p^*(x):=\frac{p(x)}{p(x)-1}.
$$

Our aim is to realize similar ideas in the above introduced nonlocal setting and to construct
functional spaces corresponding to the functional 
\eqref{intr_functional}
and to the related nonlocal equations. To this end
we consider the functional
$$
F(u)+ \|u\|^{p_-}_{L_{p_-}}= \int\limits_{\Omega\times\Omega}a(x-y) \vp\big(\big|u(y)-u(x)\big|,x,y\big)dxdy+\int\limits_\Omega
|u(x)|^{p_-}dx
$$
and define the  set
\begin{equation}\label{def_L}
\cL(\Omega):=\big\{u\in L_{1,loc}(\Omega)\,:\, f(u)<+\infty\big\},\qquad
f(u):=F(u)+\|u\|^{p_-}_{L_{p_-}}(\Omega).
\end{equation}
We  show that, under natural growth and convexity assumptions on $\vp(\,\cdot\,,x,y)$, this set is linear and, equipped with the Luxemburg norm, it forms a separable Banach space.
Then we explore the properties of this space. In particular, we address the questions of density of
compactly supported infinitely differentiable
functions in this space and characterize the structure of the dual space.

Replacing $F(u)$ with $F_{p(\cdot)}(u)$ in formula \eqref{def_L}, we arrive at the definition of the space $\cL_{p(\cdot)}(\Omega)$ which is a particular case of  the space  $\cL(\Omega)$.
 It is natural to compare  the spaces  $\cL_{p(\cdot)}(\Omega)$,  $p=p(x,y)$,  and the Lebesgue spaces with variable exponent.
Since the functional $F_{p(\cdot)}(u)$ is nonlocal, the variable exponent $p(x,y)$ depends on two arguments $x$ and $y$.
Thus we do not expect that for the generic $p(x,y)$ the space $\cL_{p(\cdot)}(\Omega)$ corresponding to $F_{p(\cdot)}(u)$ coincides
with any Lebesgue space $L_{q(\cdot)}(\Omega)$ with a variable exponent $q(x)$.

Recall that the Sobolev space $W^1_{p(\cdot)}(\Omega)$ with $1<p_-\leqslant p(x)\leqslant p_+$ is embedded in $L_{p(\cdot)}(\Omega)$ and the embedding
is locally compact. In the case of nonlocal functional $F_{p(\cdot)}$ with  $1<p_-\leqslant p(x,y)\leqslant p_+$  the picture is rather different. Although $\cL(\Omega)$ is embedded into $L_{p_-}(\Omega)$,
the embedding is not locally compact any more; here we have $L_{p_-}(\Omega)\cap L_{p_+}(\Omega)\subset \cL(\Omega)\subset L_{p_-}(\Omega)$, see Theorem \ref{th1} below.

As a natural  application  of our results,  one can
%
 develop a qualitative theory of equations
$$
-\int\limits_{\Omega}a(x-y)\vp'(|u(y)-u(x)|,x,y)\frac{u(y)-u(x)}{|u(y)-u(x)|}dy+ |u(x)|^{p_--2}u(x)=g \quad\text{in}\quad\Omega,
$$
where $g\in  \cL^*$,
and $\vp'$ stands for the partial derivative of $\vp$ with respect to the first
variable
   \begin{equation*}
   \vp'(z,x,y)=\frac{\p\vp}{\p z}(z,x,y).
\end{equation*}
This equation is the Euler--Lagrange equation of the functional $F(u)+\|u\|^{p_-}_{L_{p_-}}-\langle g,u\rangle$.
Our results on the structure of the dual space $\cL^*$ imply that the above equation is solvable for any  $g\in  \cL^*$ and the operator on its left hand side maps continuously  $\cL$ on  $\cL^*.$

It should be emphasized that, in connection with the growing number of applications where non-local convolution-type functionals and  operators are used, the study of the corresponding ''nonlocal'' Orlicz-type spaces is becoming a very important task.

The paper is organized as follows. In Section \ref{Sec_2} we provide the detailed list of conditions
on the functional $F(u)$ and formulate our main results.
%
%
%
In Section \ref{sec2} we provide a number of auxiliary technical results.
%
The remaining four sections
are devoted to the proof of 
 main results.
%


\section{Problem setup and main results}\label{Sec_2}

Let $\Om\subseteq\mathds{R}^d$ be an arbitrary   $d$--dimensional domain, where $d\geqslant 1.$ The domain $\Om$ can be both bounded and unbounded; the case $\Om=\mathds{R}^d$ is also admitted.  By $a=a(z),$ $z\in\mathds{R}^d,$ we denote a non--negative function on $\mathds{R}^d,$ we suppose that $a\in L_1(\mathds{R}^d)$ and there exists a fixed non--empty ball $B_0$ centered at zero such that
\begin{equation}\label{2.17}
a(z)\geqslant c_0>0\quad\text{almost everywhere in}\quad B_0.
\end{equation}
If the domain $\Om$ is multi--connected and $\Om_i,$ $i=1,2,\ldots$ are disjoint connected components of $\Om,$ we suppose that the distance between these components is less than the diameter of $B_0:$
\begin{equation}\label{2.19}
\dist(\Om_i,\Om_{i+1})<\diam B_0,\qquad i=1,2,\ldots
\end{equation}

We recall \cite[Ch. 2, Sect. 2.1]{HH} that a real function $r=r(t)$ defined on $\mathds{R}_+$ is called \textit{almost increasing}, respectively, \textit{almost decreasing} if there exists  a constant $\b\geqslant 1$ such that
\begin{equation}\label{2.25}
r(s)\leqslant \b r(t),\qquad\text{respectively,}\qquad \b r(s)\geqslant  r(t),
\end{equation}
for all $0\leqslant s\leqslant t.$  By $\vp=\vp(z,x,y)$ we denote a non--negative real--valued function of the variables  $(z,x,y)\in\mathds{R}_+\times\Om\times\Om,$ which is supposed to obey the following conditions:

\begin{enumerate}\def\theenumi{(C\arabic{enumi})}

\item\label{Meas} For each $u\in L_{1,loc}(\Om)$ the function $\vp(|u(x)-u(y)|,x,y)$ is measurable on $\Om\times\Om.$

\item\label{Conv} The function $\vp(t,x,y)$ is uniformly convex in $t$ for almost all $(x,y),$ namely, for every $\e>0$ there exists $\d\in(0,1)$ such that
    \begin{equation}\label{uniconv}
    \vp\left(\frac{s+t}{2},x,y\right)\leqslant (1-\d) \frac{\vp(s,x,y)+\vp(t,x,y)}{2}
    \end{equation}
 for almost all $(x,y)\in\Om\times\Om$ and all $s>0,$ $t>0$ such that $|s-t|\geqslant \e \max\{s,\,t\}.$

\item\label{Pconv} There exist two constants $p_-$ and $p_+$ such that
$1<p_-\leqslant p_+$ and a constant $\b\geqslant 1$ such that the function $t\mapsto \frac{\vp(t,x,y)}{t^{p_-}}$ is almost increasing with the  constant $\b$ in (\ref{2.25})  for almost all $(x,y)\in\Om\times\Om,$ while the function $t\mapsto \frac{\vp(t,x,y)}{t^{p_+}}$ is almost decreasing with the constant $\b$
for almost all $(x,y)\in\Om\times\Om.$

\item\label{Bound} The function $\vp$
satisfies the relations
\begin{align}\label{2.28}
&c_1^{-1}\leqslant \vp(1,x,y)\leqslant c_1,
\\
&\vp(0,x,y)=0,\qquad \vp(t,x,y)>0,\label{2.29}
\end{align}
for almost all $(x,y)\in\Om\times\Om,$ where $c_1$ is a fixed positive constant independent of $x$ and $y.$

\item\label{Diff} The function $\vp(t,x,y)$ is differentiable in $t>0$ for almost all $(x,y)\in\Om\times\Om$ and satisfies the estimate
    \begin{equation*}
    0< t\vp'(t,x,y)\leqslant c_2\vp(t,x,y),\qquad t>0
    \end{equation*}
    for almost all $(x,y)\in\Om\times\Om$ with a constant $c_2>1$ independent of $t,$ $x,$ and $y.$
\end{enumerate}


We shall show in Lemma~\ref{lm:MonCon} that  Conditions~\ref{Pconv},~\ref{Bound} ensure the identities
\begin{equation}\label{2.30}
\lim\limits_{t\to+0} \vp(t,x,y)=0,\qquad \lim\limits_{t\to+\infty} \vp(t,x,y)=+\infty
\end{equation}
for almost all $(x,y)\in\Om\times\Om.$

On functions $u\in L_{1,loc}(\Om)$ we introduce the functional
\begin{equation}\label{2.2}
F(u):=\int\limits_{\Om\times\Om}
\vp\big(|u(x)-u(y)|,x,y\big)a(x-y)\,dxdy.
\end{equation}
For each $u\in L_{1,loc}(\Om)$ this  functional is either finite or equal to $+\infty$ since the case of a non--integrable integrand is excluded by Condition~\ref{Meas} and the non--negativity of integrand. In what follows we allow $F(u)$ to take the value $+\infty$ and under this assumption this functional is well--defined.

 In terms of this functional we define one more functional
\begin{equation}\label{2.3}
f(u):=|u|_{p,\Om}+\|u\|_{L_{p_-}(\Om)},\qquad |u|_{p,\Om}:=\inf\left\{\l>0:\, F\left(\frac{u}{\l}\right) \leqslant 1\right\}.
\end{equation}
We denote
\begin{equation}\label{2.18}
  \cL(\Om):=\big\{u\in L_{p_-,loc}(\Om):\, f(u)<+\infty\big\}.
\end{equation}

In addition to the space $\cL(\Om),$ we also introduce the space generated  by the functional $F.$ Namely, on the space $L_{1,loc}(\Om)$ we define  the equivalence relation:
$u\sim v$ if  $u-v=const$ almost everywhere on $\Om.$ By $\L(\Om)$ we denote the linear space  consisting  of cosets $\mathbf{U}$  generated by the equivalence relation $\sim$ such that
$|u|_{p,\Om}<\infty$ for some $u\in \mathbf{U},$ and hence for each $u\in \mathbf{U}.$ We extend the functional $|\cdot|_{p,\Om}$ to the space $\L(\Om)$ by the rule
\begin{equation}\label{2.16}
|\mathbf{U}|_{p,\Om}:=|u|_{p,\Om}\quad \text{for}\quad u\in\mathbf{U}.
\end{equation}
It is clear that the above definition is independent of the particular choice of an element $u$ in the coset $\mathbf{U}.$

The main aim of our work is to study the spaces $\cL(\Om)$ and $\L(\Om)$ as well as  their dual spaces.  %

\subsection{Main results}\label{sec_main_res}

Now we are in position to formulate our main results.
The first result is about very basic properties of the space $\cL(\Om).$

\begin{theorem}\label{th1} Assume that Conditions~\ref{Meas},~\ref{Conv},~\ref{Pconv},~\ref{Bound} are satisfied.
Then the functional $\|u\|_{\cL(\Om)}:=f(u)$ is a norm, and the space $\cL(\Om)$ equipped with this norm is Banach.
 The following embeddings are valid
\begin{equation}\label{2.4}
L_{p_+}(\Om)\cap L_{p_-}(\Om)\subseteq \cL(\Om)\subseteq L_{p_-}(\Om).
\end{equation}
The embeddings are continuous.
\end{theorem}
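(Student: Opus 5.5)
The plan has four steps: show $|\cdot|_{p,\Om}$ is a seminorm, obtain the embeddings \eqref{2.4}, prove completeness, and finally check that $f$ is a norm.

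\emph{Step 1: $|\cdot|_{p,\Om}$ is a Luxemburg seminorm.} We first record elementary consequences of \ref{Pconv} and \ref{Bound}, applied with $s=1$ or $t=1$ in \eqref{2.25}:
\[
\b^{-1}c_1^{-1}\min\{t^{p_-},t^{p_+}\}\leqslant \vp(t,x,y)\leqslant \b c_1\max\{t^{p_-},t^{p_+}\},
\]
together with the scaling bound $\vp(\l t,x,y)\leqslant \b\l^{p_-}\vp(t,x,y)$ for $\l\leqslant1$. Convexity of $\vp$ in $t$ follows from \ref{Conv}, which gives midpoint convexity; combined with local boundedness this yields convexity, and $\vp$ is nondecreasing since $\vp(0)=0\leqslant\vp(t)$. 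Hence the modular $F$ is convex, even, and satisfies $F(0)=0$. From this the standard Luxemburg argument gives the triangle inequality:
\[
F\!\left(\frac{u+v}{\l+\mu}\right)\leqslant \frac{\l}{\l+\mu}F\!\left(\frac u\l\right)+\frac{\mu}{\l+\mu}F\!\left(\frac v\mu\right),
\]
where we use $|u(x)+v(x)-u(y)-v(y)|\leqslant|u(x)-u(y)|+|v(x)-v(y)|$ and monotonicity. Homogeneity is immediate from the definition. Since $\|\cdot\|_{L_{p_-}}$ is a norm, $f$ is a norm.

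\emph{Step 2: the embeddings.} For $\cL\subseteq L_{p_-}$ there is nothing to prove, because $f\geqslant\|\cdot\|_{L_{p_-}}$. For the left inclusion take $u\in L_{p_+}\cap L_{p_-}$ with $\|u\|_{L_{p_+}}+\|u\|_{L_{p_-}}\leqslant\e$. We use $\vp(t)\leqslant \b c_1(t^{p_-}+t^{p_+})$ together with $|u(x)-u(y)|^q\leqslant 2^{q-1}(|u(x)|^q+|u(y)|^q)$ and integrate. Since $\int a(x-y)\,dy\leqslant\|a\|_{L_1}$, this gives
\[
F(u)\leqslant C\|a\|_{L_1}\big(\|u\|_{L_{p_-}}^{p_-}+\|u\|_{L_{p_+}}^{p_+}\big),
\]
which is at most $1$ for $\e$ small. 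Hence $|u|_{p,\Om}\leqslant C(\|u\|_{L_{p_-}}+\|u\|_{L_{p_+}})$ by homogeneity, and this gives continuity of the embedding.

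\emph{Step 3: completeness, the main step.} Let $(u_n)$ be Cauchy in $\cL$. Then $u_n\to u$ in $L_{p_-}$, and along a subsequence a.e.\ in $\Om$, hence $u_n(x)-u_n(y)\to u(x)-u(y)$ a.e.\ in $\Om\times\Om$. Fix $\e$ and $N$ with $|u_n-u_m|_{p,\Om}\leqslant\e$ for $n,m\geqslant N$, so that $F((u_n-u_m)/\e)\leqslant 1$. Fatou's lemma, applied with $m\to\infty$ along the subsequence, gives $F((u_n-u)/\e)\leqslant1$. Continuity of $\vp$ in $t$, which follows from convexity, is needed to pass to the limit in the integrand. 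Thus $|u_n-u|_{p,\Om}\leqslant\e$; in particular $u\in\cL$ and $u_n\to u$ in $\cL$. Measurability of the integrand is guaranteed by \ref{Meas}. The delicate points are the deduction of genuine convexity and continuity of $\vp(\cdot,x,y)$ from \ref{Conv} and \ref{Pconv} (the latter gives local boundedness), and, if needed, the limits \eqref{2.30}, which I would take from Lemma~\ref{lm:MonCon}.
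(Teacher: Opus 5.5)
Your proposal is correct and follows essentially the same route as the paper: the Luxemburg convexity argument for the triangle inequality (the paper cites Lemma~3.2.2 of Harjulehto--H\"ast\"o), pointwise power bounds for $\vp$ integrated against $\|a\|_{L_1(\mathds{R}^d)}$ for the left embedding, and completeness via the $L_{p_-}$-limit, an a.e.\ convergent subsequence and Fatou's lemma. The differences are minor refinements. You apply Fatou directly to $F((u_n-u_m)/\e)\leqslant 1$, which gives $|u_n-u|_{p,\Om}\leqslant\e$ without the modular-to-norm conversion through \eqref{3.17a}. You also derive genuine convexity and continuity of $\vp$ from the midpoint inequality in \ref{Conv} plus local boundedness, whereas the paper takes convexity as postulated.
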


On the space $\cL(\Om)$ we can introduce  equivalent norms. Namely, on $L_{1,loc}(\Om)$ we introduce the functionals
\begin{equation}\label{2.7}
G(u):=F(u)+\int\limits_{\Om}|u(x)|^{p_-}\,dx, \qquad g(u):=\inf\left\{\l>0:\, G\left(\frac{u}{\l}\right) \leqslant 1\right\}.
\end{equation}

\begin{theorem}\label{th4} Assume that Conditions~\ref{Meas},~\ref{Conv},~\ref{Pconv},~\ref{Bound} are satisfied.
Then the functional $g$ is a norm on $\cL(\Om).$  Moreover, this norm is equivalent to $f(u),$ namely,
\begin{equation}\label{2.6}
\frac{1}{2}f(u)\leqslant   g(u) \leqslant \b^{\frac{1}{p_-}}  f(u).
\end{equation}
\end{theorem}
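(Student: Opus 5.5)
The plan is to treat $g$ as the Luxemburg gauge of the modular $G$, verify the norm axioms by the standard convexity argument, and then obtain the two inequalities in \eqref{2.6} by comparing the sublevel sets of $G$ with those of $F$ and of $\|\cdot\|_{L_{p_-}}^{p_-}$. The scaling behaviour of $F$ will come from the almost-increasing property in Condition~\ref{Pconv}.

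\emph{Norm properties.}
\begin{enumerate}
\item The modular $G$ is even and satisfies $G(0)=0$, by \eqref{2.29}.
\item $G$ is convex. By Conditions~\ref{Conv} and~\ref{Bound}, $\vp(\cdot,x,y)$ is convex with $\vp(0,x,y)=0$ and $\vp\geqslant 0$, hence nondecreasing on $\mathds{R}_+$. Therefore $u\mapsto\vp(|u(x)-u(y)|,x,y)$ is convex, since it is a nondecreasing convex function composed with a convex one. Integrating against $a(x-y)\geqslant0$ shows that $F$ is convex, and adding $\int|u|^{p_-}$ preserves convexity.
\item Homogeneity $g(tu)=|t|g(u)$ is immediate from the definition.
\item For the triangle inequality, take $\l>g(u)$ and $\mu>g(v)$. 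Since $\l\mapsto G(u/\l)$ is nonincreasing (by convexity and $G(0)=0$), we have $G(u/\l)\leqslant1$ and $G(v/\mu)\leqslant1$. Convexity then gives
\[
G\Bigl(\frac{u+v}{\l+\mu}\Bigr)\leqslant\frac{\l}{\l+\mu}G(u/\l)+\frac{\mu}{\l+\mu}G(v/\mu)\leqslant1,
\]
so $g(u+v)\leqslant\l+\mu$. Letting $\l\downarrow g(u)$ and $\mu\downarrow g(v)$ yields the triangle inequality.
\item For definiteness, suppose $g(u)=0$. Then $\int|u|^{p_-}\leqslant\l^{p_-}$ for all $\l>0$, so $u=0$ a.e.
\end{enumerate}
The inequalities \eqref{2.6} then show that $g$ is finite exactly on $\cL(\Om)$.

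\emph{Lower bound.} If $G(u/\l)\leqslant1$, then both $F(u/\l)\leqslant1$ and $\|u/\l\|^{p_-}_{L_{p_-}}\leqslant1$. Hence $|u|_{p,\Om}\leqslant\l$ and $\|u\|_{L_{p_-}}\leqslant\l$, so $f(u)\leqslant2\l$. Taking the infimum over admissible $\l$ gives $\tfrac12 f(u)\leqslant g(u)$.

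\emph{Upper bound.} This is the main step. Put $\l_1>|u|_{p,\Om}$ (so that $F(u/\l_1)\leqslant1$ by the monotonicity above), $\l_2=\|u\|_{L_{p_-}}$, $\l=\l_1+\l_2$ and $c=\b^{1/p_-}$.
\begin{enumerate}
\item Condition~\ref{Pconv} gives, for $s\geqslant1$ and $t>0$, the bound $\vp(t/s,x,y)\leqslant\b s^{-p_-}\vp(t,x,y)$. This follows from $\vp(t/s)/(t/s)^{p_-}\leqslant\b\,\vp(t)/t^{p_-}$.
\item Apply this pointwise with $t=|u(x)-u(y)|/\l_1$ and $s=c\l/\l_1\geqslant1$. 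This yields
\[
F\Bigl(\frac{u}{c\l}\Bigr)\leqslant\b\Bigl(\frac{\l_1}{c\l}\Bigr)^{p_-}F\Bigl(\frac{u}{\l_1}\Bigr)\leqslant\Bigl(\frac{\l_1}{\l}\Bigr)^{p_-}.
\]
\item For the Lebesgue part, $\int|u/(c\l)|^{p_-}=\b^{-1}(\l_2/\l)^{p_-}\leqslant(\l_2/\l)^{p_-}$.
\item Since $p_->1$ and $\l_i/\l\in[0,1]$, we get $G(u/(c\l))\leqslant\l_1/\l+\l_2/\l=1$. Hence $g(u)\leqslant c(\l_1+\l_2)$, and letting $\l_1\downarrow|u|_{p,\Om}$ gives $g(u)\leqslant\b^{1/p_-}f(u)$.
\end{enumerate}

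The expected obstacle is the bookkeeping in the degenerate cases. If $\l_1$ could be $0$, the scaling argument breaks down; this is avoided because $\l_1>|u|_{p,\Om}\geqslant0$ is chosen strictly positive. If $u=0$, everything is trivial. One must also check the monotonicity of $\l\mapsto F(u/\l)$ so that $F(u/\l_1)\leqslant1$ for every $\l_1>|u|_{p,\Om}$; this again follows from convexity of $F$ and $F(0)=0$.
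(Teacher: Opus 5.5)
Your proof is correct and follows essentially the same route as the paper. The lower bound comes from comparing the sublevel sets of $G$ with those of $F$ and of $\|\cdot\|_{L_{p_-}(\Omega)}^{p_-}$, and the upper bound from the scaling estimate $\varphi(t/s,x,y)\leqslant\beta s^{-p_-}\varphi(t,x,y)$ for $s\geqslant 1$, i.e.\ the left inequality in \eqref{3.22a}.

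The one difference is in how the upper bound is packaged. You exhibit the admissible level $\lambda=\beta^{1/p_-}\bigl(\lambda_1+\|u\|_{L_{p_-}(\Omega)}\bigr)$ directly. The paper instead evaluates at $\lambda_G=g(u)$ through the identities $G(u/\lambda_G)=1$ and $F(u/\lambda_F)=1$. Your version does not need continuity or strict monotonicity of $\lambda\mapsto G(u/\lambda)$. It also covers the case $|u|_{p,\Omega}=0$ (nonzero constants when $\Omega$ has finite measure), where the paper's equation for $\lambda_F$ has no solution. You additionally check the norm axioms, which the paper leaves implicit.
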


The separability of the space $\cL(\Om)$ and the density of compactly supported infinitely differentiable functions are addressed in the next theorem.

\begin{theorem}\label{th2} Assume that Conditions~\ref{Meas},~\ref{Conv},~\ref{Pconv},~\ref{Bound} are satisfied.
The space of infinitely differentiable compactly supported functions $C_0^\infty(\Om)$ is a dense subset of $\cL(\Om).$ The space $\cL(\Om)$ is separable.
\end{theorem}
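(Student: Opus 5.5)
Since $\cL(\Om)\subseteq L_{p_-}(\Om)$ continuously (Theorem~\ref{th1}), separability will follow from density of $C_0^\infty(\Om)$: $C_0^\infty(\Om)$ contains a countable subset dense in the $C^1$-type topology on compacts (e.g.\ rational-coefficient combinations of a fixed countable family), and we will check that convergence $u_n\to u$ with supports in a fixed compact and $u_n\to u$ uniformly implies convergence in $\cL(\Om)$. Indeed, $f(u_n-u)\to0$ then reduces, by Theorem~\ref{th4}, to $G((u_n-u)/\l)\to0$ for each fixed $\l>0$. This in turn follows from $\vp(t,x,y)\leqslant \b c_1\max\{t^{p_-},t^{p_+}\}$ (a consequence of \ref{Pconv}, \ref{Bound}), together with $a\in L_1$ and the finite measure of the support. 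So the real task is density, which I would prove in three steps: truncation in the domain, truncation of values, and mollification.

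\emph{Step 1 (cut-off at infinity).} Let $\chi_R(x)=\chi(x/R)$ with $\chi\in C_0^\infty$, $0\leqslant\chi\leqslant1$, $\chi=1$ on the unit ball. We have $u-\chi_Ru=(1-\chi_R)u$ and
\[
(1-\chi_R(x))u(x)-(1-\chi_R(y))u(y)=(1-\chi_R(x))(u(x)-u(y))-(\chi_R(x)-\chi_R(y))u(y).
\]
By the almost-monotonicity and quasi-convexity of $\vp$ (the $\Delta_2$ property $\vp(2t)\leqslant \b 2^{p_+}\vp(t)$ from \ref{Pconv}), $F(\l^{-1}(1-\chi_R)u)$ is bounded by a constant times two terms. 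The first is $\int a\,\vp(\l^{-1}|u(x)-u(y)|)(1-\chi_R(x))^{p_-}$, which tends to zero by dominated convergence since $F(u/\l)<\infty$ for every $\l$; this uses $\Delta_2$ to pass from one $\l$ to all. The second is $\int a(x-y)\max\{(|\chi_R(x)-\chi_R(y)||u(y)|/\l)^{p_\pm}\}$. Using $|\chi_R(x)-\chi_R(y)|\leqslant\min\{1,|x-y|\,\|\nabla\chi\|_\infty/R\}$ and dominated convergence in $z=x-y$ against $a\in L_1$, this term vanishes provided $u\in L_{p_+}$ as well as $L_{p_-}$. This is why truncation of values (Step 2) comes first.

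\emph{Step 2 (bounded functions).} Set $T_k u=\max\{-k,\min\{u,k\}\}$. Since $|T_ku(x)-T_ku(y)|\leqslant|u(x)-u(y)|$ and $T_ku\to u$ pointwise, dominated convergence gives $F((u-T_ku)/\l)\to0$; here $|(u-T_ku)(x)-(u-T_ku)(y)|\leqslant|u(x)-u(y)|$ as well, because $u-T_ku$ is a 1-Lipschitz monotone function of $u$. Also $\|u-T_ku\|_{L_{p_-}}\to0$. Bounded functions in $L_{p_-}$ lie in $L_{p_+}$, so Step 1 applies, yielding a bounded compactly supported approximant.

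\emph{Step 3 (mollification and interior support).} For $v$ bounded with compact support in $\overline\Om$, I first need the support strictly inside $\Om$: multiply by a cut-off $\eta_\d$ equal to $1$ at distance $>\d$ from $\p\Om$. The error term $(1-\eta_\d)v$ is handled as in Step 1, now with the factor $|\eta_\d(x)-\eta_\d(y)|\leqslant1$ supported where $x$ or $y$ lies in a boundary layer of vanishing measure. This works because $v$ is bounded and $a\in L_1$, so the error is at most $C\|v\|_\infty^{p_\pm}\|a\|_{L_1}\,\mathrm{mes}(\text{layer})\to0$. Then take $v_\e=v*\omega_\e$. Here $v_\e\to v$ in $L_{p_-}$ and a.e. along a subsequence, and $|v_\e|\leqslant\|v\|_\infty$ with supports in a fixed compact, so $F((v_\e-v)/\l)\to0$ by dominated convergence with majorant $C a(x-y)\mathbf 1_K(x)\mathbf 1_K(y)$ away from the trivially bounded region. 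I expect the main obstacle to be Step 1, specifically controlling the commutator term $(\chi_R(x)-\chi_R(y))u(y)$ for unbounded $\Om$ with only $a\in L_1$ and no moment assumption. The plan there is to split $|x-y|<M$ and $|x-y|\geqslant M$: the first piece is $O(M/R)$, and the second is bounded by $\int_{|z|\geqslant M}a$, which is small uniformly.
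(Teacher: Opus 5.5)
Your proof is correct. Its skeleton (truncate values, localize in space, mollify, then approximate uniformly on a fixed compact by a countable family) is the paper's; the real difference is how you localize in space.

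\emph{The paper's localization.} It never uses a smooth cut-off. After truncating values it also sets $u$ to zero where $0<|u|<\e$, so that $\mes\esssupp u<\infty$. It then multiplies by the indicator of a bounded set $S_\e\subset\esssupp u$. The error is controlled by $\|u\|_{L_\infty}$, $\|a\|_{L_1(\mathds{R}^d)}$ and $\mes(\esssupp u\setminus S_\e)$, plus a tail of $F(u)$. This is exactly the mechanism of your boundary-layer step.

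\emph{Your localization.} You use the Lipschitz cut-off $\chi(x/R)$ and control the commutator through $\int a(z)\min\{1,L|z|/R\}^{p}\,dz\to0$. This needs $u\in L_{p_-}\cap L_{p_+}$, which is why you truncate values first, but it needs no finite-measure reduction.

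Your separate boundary cut-off is a genuine improvement. The paper only takes $S_\e$ bounded, yet its mollification step uses $\dist(\esssupp u,\p\Om)>0$. For bounded $\Om$ the localization step is skipped altogether. So the paper tacitly needs $S_\e$ with compact closure in $\Om$.

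Note, however, that with $a\in L_1$ and no gradient in $F$, smoothness of the cut-off buys nothing. Take bounded $u\in L_{p_-}(\Om)$ and a compact exhaustion $K_n$ of $\Om$. The cross terms of $F\big(\l^{-1}u\mathbf{1}_{\Om\setminus K_n}\big)$ are at most $2\b c_1\|a\|_{L_1(\mathds{R}^d)}\int_{\Om\setminus K_n}\max\{|u/\l|^{p_-},|u/\l|^{p_+}\}\,dx$, and the remaining term is a tail of $F(u/\l)$. So a single indicator cut-off handles infinity and $\p\Om$ at once. That makes both your commutator estimate and the paper's small-value removal unnecessary.

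Two small repairs are needed. First, $T_k$ is defined only for real $u$. Either reduce to real and imaginary parts via \eqref{3.30} as the paper does, or use the radial retraction $z\mapsto z\min\{1,k/|z|\}$, for which $z\mapsto z-T_kz$ is still $1$-Lipschitz. Second, in the mollification step the majorant must be $Ca(x-y)\big(\mathbf{1}_K(x)+\mathbf{1}_K(y)\big)$, because the integrand need not vanish when only one of $x,y$ lies in $K$.
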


The following two theorems establish the main properties of space $\L(\Om)$.

\begin{theorem}\label{th6} Assume that Conditions~\ref{Meas},~\ref{Conv},~\ref{Pconv},~\ref{Bound} are satisfied.
The functional $|\cdot|_{p,\Om}$ is a  norm on the space $\L(\Om).$ The space $\L(\Om)$ with the norm $|\cdot|_{p,\Om}$ is Banach. The norm $|\cdot|_{p,\Om}$ is uniformly convex on $\L(\Om).$
\end{theorem}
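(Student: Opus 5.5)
We prove Theorem~\ref{th6} in three steps: the norm axioms, completeness, and uniform convexity. Throughout we use the elementary modular facts behind Theorem~\ref{th1}. From Condition~\ref{Pconv}, for $\l\geqslant1$ we have $\b^{-1}\l^{p_-}\vp(t,x,y)\leqslant\vp(\l t,x,y)\leqslant \b\l^{p_+}\vp(t,x,y)$, and the reverse inequalities hold for $\l\leqslant1$. Consequently $F(u/\l)$ is continuous and monotone in $\l$, and $F(u/|u|_{p,\Om})\leqslant 1$.

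\emph{Norm axioms.} The functional $|\cdot|_{p,\Om}$ is well defined on cosets because $F(u+c)=F(u)$. Homogeneity follows directly from the definition of the infimum. For the triangle inequality we use convexity of $\vp$ in $t$ (Condition~\ref{Conv}) together with the monotonicity of $\vp$ in $t$, which follows from \ref{Conv} and \eqref{2.29}. Taking $\l=|u|_{p,\Om}$ and $\mu=|v|_{p,\Om}$, we get
\[
F\Big(\frac{u+v}{\l+\mu}\Big)\leqslant \frac{\l}{\l+\mu}F(u/\l)+\frac{\mu}{\l+\mu}F(v/\mu)\leqslant1.
\]
Definiteness requires more care. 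If $|u|_{p,\Om}=0$, then $F(u/\l)\leqslant1$ for all small $\l$, and the lower growth bound gives $F(u)\leqslant \b\l^{p_-}$, so $F(u)=0$. Since $\vp(t,x,y)>0$ for $t>0$, this means $u(x)=u(y)$ for a.e.\ $(x,y)$ with $a(x-y)>0$, and in particular for a.e.\ $x,y$ with $x-y\in B_0$ by \eqref{2.17}. Hence $u$ is a.e.\ constant on every ball of radius $\tfrac12\operatorname{rad}B_0$, and therefore constant on each connected component by a chaining argument. Assumption \eqref{2.19} then links neighbouring components: there are points within distance $<\diam B_0$, so suitable positive-measure sets $x\in\Om_i$, $y\in\Om_{i+1}$ with $x-y\in B_0$ force the constants to coincide. (If \eqref{2.19} is read with the radius, we would adjust accordingly.) Thus $\mathbf U=0$.

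\emph{Completeness.} This is the main obstacle, because the equivalence classes modulo constants give no $L_p$ control. Take a Cauchy sequence $\mathbf U_n$ and fix a ball $B\subset\Om$ of radius small relative to $B_0$. Normalize representatives $u_n$ by $\int_B u_n=0$. The key step is a local Poincaré-type estimate
\[
\|w\|_{L_{p_-}(K)}\leqslant C_K\,|w|_{p,\Om}\quad\text{for compact }K\subset\Om,\ \int_B w=0.
\]
To prove it, first note that $\min(\vp(t),\cdot)$ is comparable to $\min(t^{p_-},t^{p_+})$ by \ref{Pconv} and \ref{Bound}. This gives $\iint_{|x-y|\in B_0,\,x,y\in K'}\min(|w(x)-w(y)|^{p_-},|w(x)-w(y)|^{p_+})\leqslant C$ whenever $|w|_{p,\Om}\leqslant1$. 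We then pass from the ball $B$ to $K$ by chaining overlapping small balls, using \eqref{2.19} across components, and apply Jensen's inequality to replace $w(x)$ by $w(x)-\text{mean}$. From this estimate it follows that $u_n$ is Cauchy in $L_{p_-,loc}$, hence converges in $L_{p_-,loc}$ and (along a subsequence) a.e.\ to some $u$. Fatou's lemma gives $F((u_n-u)/\e)\leqslant\liminf_m F((u_n-u_m)/\e)\leqslant1$ for large $n$, so $|\mathbf U_n-\mathbf U|_{p,\Om}\leqslant\e$.

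\emph{Uniform convexity.} Let $|u|_{p,\Om}=|v|_{p,\Om}=1$ and $|u-v|_{p,\Om}\geqslant\e$; we need $|(u+v)/2|_{p,\Om}\leqslant1-\d'$. Set $s=|u(x)-u(y)|$, $t=|v(x)-v(y)|$, and $r=|(u+v)(x)-(u+v)(y)|/2\leqslant(s+t)/2$. We follow the standard Orlicz argument (as in Hudzik or \cite{HH}) and split $\Om\times\Om$ into the set $E$ where $|s-t|\geqslant\eta\max(s,t)$ and its complement. On $E$, Condition~\ref{Conv} yields a gain of a factor $(1-\d)$. Off $E$ the differences of $u$ and $v$ are nearly parallel or nearly opposite. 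In the nearly opposite case $r$ is small, and convexity gives a gain of order $\vp$ of the difference, bounded below using \ref{Pconv}. Since $F(u-v)\geqslant c(\e)$ (from $|u-v|_{p,\Om}\geqslant\e$ and the modular bounds), these contributions together give $F((u+v)/2)\leqslant1-\d''$. Finally the $p_+$ growth bound converts this modular estimate into the norm estimate $|(u+v)/2|_{p,\Om}\leqslant1-\d'$. The delicate bookkeeping, choosing $\eta$ relative to $\e$ so that $F$ restricted to the nearly parallel region where $s\approx t$ cannot carry all of $F(u-v)$, is the second technical point.
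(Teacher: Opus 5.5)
Your norm-axiom step is the paper's Lemma~\ref{lm5.2}, chaining included. Your caveat about \eqref{2.19} is warranted. Since $x-y\in B_0$ means $|x-y|<\frac12\diam B_0$, take $a=\chi_{B_0}$ and two components at distance $\frac34\diam B_0$; then $\chi_{\Om_1}$ has $F=0$. The paper's own argument therefore also needs the ``radius'' reading.

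For completeness you take a genuinely different route. The paper normalizes on $B$ as you do. It then bounds $\la u_n\ra_{\Om_m}$ through Lemma~\ref{lm5.4} (an $L_1$ chaining weighted by $a$, plus \eqref{3.20a}). Next it applies Theorem~\ref{th5} on an exhaustion by bounded smooth $\Om_m$, so it ultimately rests on the external nonlocal Poincar\'e inequality \eqref{6.7}. Finally it lets $m\to\infty$.

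You instead prove a local $L_{p_-}$ Poincar\'e estimate by hand:
\begin{itemize}
\item on a bounded $K'$, the bound $t^{p_-}\leqslant\min\{t^{p_-},t^{p_+}\}+1$ turns $F(w)\leqslant1$ into a bound for $\iint_{K'\times K',\,x-y\in B_0}|w(x)-w(y)|^{p_-}\,dxdy$;
\item Jensen on small balls and chaining of their means give $\|w\|_{L_{p_-}(K)}\leqslant C_K$ when $|w|_{p,\Om}=1$ and $\la w\ra_B=0$;
\item homogeneity makes the estimate linear, and Fatou on all of $\Om\times\Om$ finishes.
\end{itemize}
This works and is self-contained, and it crosses components directly via \eqref{2.19}. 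By contrast, Theorem~\ref{th5} as stated needs the $\Om_m$ to be connected, which is impossible for an exhaustion of a disconnected $\Om$. In exchange, the paper's route reuses the bounded-domain theory it has already built.

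The uniform convexity part of your sketch has a gap. The paper itself only quotes the standard Orlicz lemma, applied to $U(x,y)=u(x)-u(y)\in\cH(\Om\times\Om)$. The functions here are complex-valued: constants lie in $\mathds{C}$, and conjugates appear in \eqref{5.7} and \eqref{2.21}. Off $E$ you only know that the moduli $s=|U|$ and $t=|V|$ are close, while the angle between $U$ and $V$ is arbitrary. So ``nearly parallel or nearly opposite'' fails, and at intermediate angles neither of your sub-cases produces a gain.

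What is needed is the vector form of uniform convexity. Split by the condition $|U-V|\geqslant\eta\max\{s,t\}$. On that set:
\begin{itemize}
\item if $|s-t|\geqslant\theta\max\{s,t\}$, use \ref{Conv} as you do;
\item if $|s-t|<\theta\max\{s,t\}$, the parallelogram law gives $|U+V|/2\leqslant(1-\kappa)\max\{s,t\}$ with $\kappa=\kappa(\eta)>0$. Hence $\vp(|U+V|/2)\leqslant(1-\kappa)\vp(\max\{s,t\})\leqslant(1-\kappa)(1+C\theta)\frac{\vp(s)+\vp(t)}{2}$, the last step by convexity plus doubling. This is a gain once $\theta\ll\kappa$.
\end{itemize}
On the complement $\vp(|U-V|)\leqslant\b\eta^{p_-}(\vp(s)+\vp(t))$. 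So that region carries at most $2\b\eta^{p_-}$ of $F(u-v)\geqslant\b^{-1}\min\{\e^{p_-},\e^{p_+}\}$, which is exactly your bookkeeping.

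Finally, the modular-to-norm step cannot use the almost-decreasing bound alone. It only gives $F(w/\l)\leqslant\b\l^{-p_+}F(w)$, which is useless when $\b(1-\d'')\geqslant1$. Use convexity with doubling instead: $\vp((1+h)t)\leqslant\big(1+h(2^{p_+}\b-1)\big)\vp(t)$ gives $F\big((1+h)\frac{u+v}{2}\big)\leqslant1$ for small $h$.
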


\begin{theorem}\label{th5} Let Conditions~\ref{Meas},~\ref{Conv},~\ref{Pconv},~\ref{Bound} be fulfilled,
and assume that  the domain $\Om$ is bounded, connected and has a Lipschitz boundary.
Then $\L(\Om)$ is a Banach space with the norm $|\cdot|_{p,\Om}.$
 Each coset $\mathbf{U}\in \L(\Om)$ contains a unique representative $u,$ which belongs to $L_{p_-}(\Om)$ and has a zero mean value:
 \begin{equation}\label{2.11}
 \la u\ra_\Om=0,\qquad  \la u \ra_\Om:=\frac{1}{\mes\Om}
\int\limits_{\Om} u(x)\,dx.
 \end{equation}
In this sense we say that
\begin{equation}\label{2.8}
\cL(\Om)=\L(\Om)\oplus \mathds{C},
\end{equation}
that is, each function $u\in\cL(\Om)$ is represented as
\begin{equation}\label{2.12}
u=u_{\bot,\Om}+\la u\ra_\Om,\qquad \la u_\bot\ra_\Om=0,
\end{equation}
where $u_{\bot,\Om}$ is an element of the coset in $\L(\Om)$ generated by $u$. For all $u\in \cL(\Om)$  the inequalities
\begin{equation}\label{2.13}
c_3\big(|u_{\bot,\Om}|_{p,\Om}+|\la u\ra_\Om|\big)  \leqslant \|u\|_{\cL(\Om)}\leqslant
c_4\big(|u_{\bot,\Om}|_{p,\Om}+|\la u\ra_\Om|\big)
\end{equation}
hold with positive  constants $c_3$ and $c_4$ independent of $u,$ but depending on $\Om.$
\end{theorem}

We introduce the conjugate function for $\vp$
\begin{equation}\label{2.26}
\vp_*(t):=\sup\limits_{s\geqslant 0} \big(st-\vp(s)\big).
\end{equation}
According to \cite[Ch. 2, Sect. 2.4, Lm. 2.4.2]{HH}, the function $\vp_*(t,x,y)$ is non--negative, convex and   continuous  in $t\geqslant 0$ for almost all $(x,y)\in\Om\times\Om,$ and satisfies Condition~\ref{Meas} and the identities (\ref{2.30}). By \cite[Ch. 2, Sect. 2.4, Prop. 2.4.9]{HH}, the function $\vp_*$ also satisfies Condition~\ref{Pconv} with $p_-$ and $p_+$ replaced by $q_+:=\frac{p_-}{p_--1}$ and $q_-:=\frac{p_+}{p_+-1},$ respectively, that is, the function $\frac{\vp_*(t,x,y)}{t^{q_+}}$ is almost increasing in $t$ and $\frac{\vp_*(t,x,y)}{t^{q_-}}$ is almost decreasing in $t$ for almost all $(x,y)\in\Om\times\Om.$ The function $\vp_*$ also satisfies the Young inequality implied by the definition (\ref{2.26})
\begin{equation}\label{2.27}
st\leqslant \vp_*(t)+\vp(s),\qquad s\geqslant 0,\quad t\geqslant 0.
\end{equation}

On the space $L_{1,loc}(\Om\times\Om)$ we introduce the functionals
\begin{equation}\label{3.8}
\begin{aligned}
&H(U):= \int\limits_{\Om\times\Om} \vp(|U(x,y)|,x,y) a(x-y) \,dxdy,
&& h(U):=\inf\left\{\l>0:\, H\left(\frac{U}{\l}\right)\leqslant 1\right\},
\\
&H_*(U):= \int\limits_{\Om\times\Om} \vp_*(|U(x,y)|,x,y) a(x-y) \,dxdy,
\quad
&&  h_*(U):=\inf\left\{\l>0:\, H_*\left(\frac{U}{\l}\right)\leqslant 1\right\},
\end{aligned}
\end{equation}
which are allowed to take the  infinite value, and the linear sets
\begin{equation}\label{3.9}
\begin{aligned}
&\cH(\Om\times\Om):=\big\{U\in L_{p_-,loc}(\Om\times\Om):\, h(U)<+\infty\big\},
\\
&\cH_*(\Om\times\Om):=\big\{U\in L_{q_-,loc}(\Om\times\Om):\, h_*(U)<+\infty\big\}.
\end{aligned}
\end{equation}

In the space $\cH_*(\Om\times\Om)$ we consider a linear subspace consisting of the functions $W=W(x,y)$ obeying the condition
\begin{equation}\label{2.14}
\int\limits_{\Om} \big(  W(x,y) a(x-y)  -  W(y,x) a(y-x) \big)\,dy=0\quad \text{for a.e.}\quad x\in\Om.
\end{equation}
This subspace is denoted by $\cM.$

Our next results describe the dual spaces for $\L(\Om)$ and $\cL(\Om).$

\begin{theorem}\label{th7} Assume that Conditions~\ref{Meas},~\ref{Conv},~\ref{Pconv},~\ref{Bound},~\ref{Diff} are satisfied.
There exists a one--to--one correspondence between the dual space $(\L(\Om))^*$ and $\L(\Om).$ Namely, for each bounded linear functional $\phi\in (\L(\Om))^*$ there exists a coset $\mathbf{W}\in \L(\Om)$   such that
\begin{align}
&\phi(\mathbf{U})=|\mathbf{W}|_{p,\Om}\frac{\Phi\left(\mathbf{U}, \frac{\mathbf{W}}{|\mathbf{W}|_{p,\Om}}\right)} {\Phi\left(\frac{\mathbf{W}}{|\mathbf{W}|_{p,\Om}},
\frac{\mathbf{W}}{|\mathbf{W}|_{p,\Om}}\right)},\label{2.20}
\\
&\Phi(\mathbf{U},\mathbf{W}):=\int\limits_{\Om\times\Om}\big(u(x)-u(y)\big) \vp'\big(|w(x)-w(y)|,x,y\big)\frac{\overline{w(x)}-\overline{w(y)}}{|w(x)-w(y)|}
 a(x-y)  \,dxdy,\label{2.21}
\end{align}
where $u\in\mathbf{U},$ $w\in\mathbf{W}$ and the above definition of $\Phi$ is independent of the choice of $u$ and $w.$ And vice versa, each coset $\mathbf{W}\in \L(\Om)$  generates a bounded linear functional by the above formula.

For each functional $\phi\in (\L(\Om))^*$   there exists a function $W=W(x,y),$ $W\in \cH_*(\Om\times\Om)$ such that
\begin{equation}\label{2.22}
\phi(u)=\int\limits_{\Om\times\Om}
\big(u(x)-u(y)\big) W(x,y) a(x-y)  \,dxdy.
\end{equation}
Two functions $W_1,\, W_2\in \cH_*(\Om\times\Om)$ generate the same functional by the formula (\ref{2.22}) if and only if $W_1-W_2\in \cM.$
\end{theorem}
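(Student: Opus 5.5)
Theorem~\ref{th7} has two representation statements, and I would prove them by two independent routes. The first, formula \eqref{2.20}, is a Riesz-type representation in a uniformly convex space. The second, formula \eqref{2.22}, is obtained by a Hahn--Banach extension into the Orlicz space $\cH(\Om\times\Om)$ together with the duality $\cH^*=\cH_*$.

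\emph{Formula \eqref{2.20}.} By Theorem~\ref{th6}, $\L(\Om)$ is a uniformly convex Banach space, hence reflexive (Milman--Pettis). Let $\phi\ne0$. By reflexivity, $\phi$ attains its norm at some $\mathbf{W}_0$ with $|\mathbf{W}_0|_{p,\Om}=1$, and uniform convexity makes this maximiser unique.

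\begin{enumerate}
\item I would compute the Gâteaux derivative of $t\mapsto F(w_0+tu)$ at $t=0$. By Condition~\ref{Diff} it equals $\Phi(\mathbf{U},\mathbf{W}_0)$. To justify differentiating under the integral sign, I would bound the difference quotients using Condition~\ref{Diff} and Condition~\ref{Pconv}; these give $\vp(2t)\le \b 2^{p_+}\vp(t)$, so dominated convergence applies.
\item Since $F(w_0)=1$ (this uses \eqref{2.30} and continuity of $\l\mapsto F(w_0/\l)$), the unit sphere near $\mathbf{W}_0$ is the level set $F=1$. The implicit function theorem for the Luxemburg norm then gives $\frac{d}{dt}|w_0+tu|_{p,\Om}\big|_{t=0}=\Phi(\mathbf{U},\mathbf{W}_0)/\Phi(\mathbf{W}_0,\mathbf{W}_0)$. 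The denominator is positive by Condition~\ref{Diff}.
\item Maximality of $\phi$ at $\mathbf{W}_0$ forces $\phi(\mathbf{U})=\|\phi\|\,\Phi(\mathbf{U},\mathbf{W}_0)/\Phi(\mathbf{W}_0,\mathbf{W}_0)$. Setting $\mathbf{W}=\|\phi\|\mathbf{W}_0$ gives exactly \eqref{2.20}.
\end{enumerate}

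For the converse, I would bound $|\Phi(\mathbf{U},\mathbf{W})|$ via the Young inequality \eqref{2.27}. This needs $\vp_*(\vp'(t))\le C\vp(t)$, which follows from $t\vp'(t)\le c_2\vp(t)$ and the equality case of Young's inequality, $\vp_*(\vp'(t))=t\vp'(t)-\vp(t)$.

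Injectivity comes from uniqueness of the norm-attaining point. The map is also surjective: every normalised $\mathbf{W}_0$ is the norming point of the functional it generates, because convexity of $F$ gives $\Phi(\mathbf{U},\mathbf{W}_0)\le\Phi(\mathbf{W}_0,\mathbf{W}_0)$ for all $\mathbf{U}$ with $F(u)\le1$.

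\emph{Formula \eqref{2.22}.} The map $J\colon u\mapsto u(x)-u(y)$ is an isometry of $\L(\Om)$ onto a closed subspace of $\cH(\Om\times\Om)$, since $|u|_{p,\Om}=h(Ju)$ by definition. I would extend $\phi\circ J^{-1}$ by Hahn--Banach to all of $\cH$. Then I would identify $\cH^*$ with $\cH_*$ through the pairing $\int UWa\,dxdy$, which is the standard Orlicz duality (HH, Ch.~2--3) applied with the measure $a(x-y)\,dxdy$; it is valid because Condition~\ref{Pconv} yields the $\Delta_2$ condition for both $\vp$ and $\vp_*$. This produces the required $W\in\cH_*$.

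For uniqueness modulo $\cM$, $W_1$ and $W_2$ give the same functional iff $\int(u(x)-u(y))(W_1-W_2)a\,dxdy=0$ for all $u$. Swapping $x$ and $y$ in the second term turns this into $\int u(x)\int\big((W_1-W_2)(x,y)a(x-y)-(W_1-W_2)(y,x)a(y-x)\big)\,dy\,dx=0$. Testing with $u\in C_0^\infty$, which is dense by Theorem~\ref{th2}, yields \eqref{2.14}.

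I expect the main obstacle to be step~2 together with the justification in step~1, namely the differentiability of the Luxemburg norm with explicit domination under only Conditions~\ref{Pconv} and~\ref{Diff}. A secondary difficulty is the Fubini step in the uniqueness argument, where the inner integral in \eqref{2.14} must be shown to be locally integrable. For that I would use $a\in L_1$ and Hölder's inequality in the Orlicz setting.
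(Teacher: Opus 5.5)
Your proof of \eqref{2.20} follows the paper closely. The ingredients are the same:
\begin{itemize}
\item a unique norming element $\mathbf W_0$ from uniform convexity (the paper proves existence directly, adapting Adams, where you invoke Milman--Pettis);
\item G\^ateaux differentiability of $F$ by dominated convergence (your bound $\vp'(|U|+t|V|,x,y)|V|\leqslant c_2\vp(|U|+|V|,x,y)$ for $0<t\leqslant 1$ is a valid substitute for the paper's Young-inequality domination);
\item the first-order expansion of $|w_0+tu|_{p,\Om}$;
\item identifying $\phi$ with $\|\phi\|$ times that derivative, which the paper phrases as a Lagrange multiplier rule.
\end{itemize}
Two small corrections. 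For real $t$ the derivative is $\RE\Phi(\mathbf U,\mathbf W_0)/\Phi(\mathbf W_0,\mathbf W_0)$, and $\IM\phi$ is recovered by replacing $\mathbf U$ with $-\iu\mathbf U$. The joint differentiability behind your implicit-function step holds because $(\alpha,\beta)\mapsto F(\alpha w_0+\beta u)$ is a finite convex function on $\mathds{R}^2$ with partial derivatives at $(1,0)$, hence differentiable there.

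Your step~3 already shows that $\phi$ is determined by its norming element, so the second half of Lemma~\ref{lm5.1} is not needed. Your convexity check that $\mathbf W$ is the norming element of the functional it generates is something the paper omits, but the bijection requires it.

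For \eqref{2.22} your route differs. The paper reads the kernel off \eqref{2.20}: it is a multiple of $\vp'(|w_0(x)-w_0(y)|,x,y)\frac{\overline{w_0(x)}-\overline{w_0(y)}}{|w_0(x)-w_0(y)|}$, placed in $\cH_*(\Om\times\Om)$ via \eqref{6.26}, \eqref{6.27}. Your Hahn--Banach extension plus $(L^{\vp})^*=L^{\vp_*}$ for the measure $a(x-y)\,dxdy$ is independent of \eqref{2.20} and of Condition~\ref{Diff}. The cost is a non-explicit $W$, and the duality must be applied modulo functions vanishing where $a(x-y)=0$, since $h$ is only a seminorm on $\cH(\Om\times\Om)$.

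The genuine gap is the ``if'' half of the last assertion.
\begin{itemize}
\item Your Fubini computation with bounded compactly supported $u$ correctly gives ``same functional $\Rightarrow$ $W_1-W_2\in\cM$'', and no density is needed for that.
\item The converse needs a functional of the form \eqref{2.22} that vanishes on $C_0^\infty(\Om)$ to vanish on all of $\L(\Om)$. That is, $C_0^\infty(\Om)$ plus constants must be dense in $\L(\Om)$.
\item Theorem~\ref{th2} does not give this, because it concerns $\cL(\Om)$. For unbounded $\Om$, the space $\L(\Om)$ can contain cosets with no representative in $L_{p_-}(\Om)$. For bounded connected Lipschitz $\Om$ the density does follow from Theorems~\ref{th5} and~\ref{th2}.
\end{itemize}

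In general the ``if'' half is false. Take $d=3$, $\vp(t,x,y)=t^2$, $a$ the indicator of the unit ball, and $\Om=\{x_1>1\}\cup\{x_1<-1\}\cup\{|x'|<1\}$ with $x=(x_1,x')$.
\begin{itemize}
\item Nonlocal Poincar\'e inequalities on dyadic half-annuli of size $R$ in each half-space have constant of order $R^2$. So the averages of any $u\in\L(\Om)$ over them change by $O(R^{-1/2})|u|_{p,\Om}$ between consecutive scales, and they converge along each end.
\item The difference $\phi_\infty(u)$ of the two limits is a bounded functional on $\L(\Om)$ that vanishes on $C_0^\infty(\Om)$ and on constants.
\item On the other hand, $\phi_\infty(\theta(x_1))=1$ for a smooth step $\theta$. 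This function has finite energy: its differences are nonzero only on pairs meeting the bounded piece $\{|x_1|<1,\,|x'|<1\}$, because the two half-spaces are at distance $2$.
\item Writing $\phi_\infty$ as in \eqref{2.22}, your own Fubini argument puts its kernel in $\cM$, although $\phi_\infty\neq0$.
\end{itemize}
So this half of the statement needs an additional density hypothesis. The paper merely calls it ``straightforward'', so the defect lies in the statement itself, but your appeal to Theorem~\ref{th2} does not repair it.
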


It follows from the  definitions of the spaces $\cL(\Om)$  and  $\L(\Om)$ that
the space $\cL(\Om)$ consists of the functions $u\in L_{1,loc}(\Om)$ such that $u \in L_{p_-}(\Om)$ and $\mathbf{U}\in\L(\Om),$ where $\mathbf{U}$ is the coset  generated by the element $u$ and the above introduced relation $\sim.$ In this sense, we can say that
\begin{equation}\label{2.24}
\cL(\Om)=\L(\Om)\cap L_{p_-}(\Om).
\end{equation}

\begin{theorem}\label{th8} Assume that Conditions~\ref{Meas},~\ref{Conv},~\ref{Pconv},~\ref{Bound},~\ref{Diff} are satisfied.
Then each functional $\phi\in(\cL(\Om))^*$ can be represented in the form
\begin{equation}\label{2.23}
\phi(u)=\phi_0(\mathbf{U}) + \int\limits_{\Om} \psi(x) u(x)\,dx,\qquad u\in\cL(\Om),
\end{equation}
where $\phi_0$ is some functional from $(\L(\Om))^*,$ and $\psi$ is some function from $L_{q_-}(\Om),$ while $\mathbf{U}\in \L(\Om)$ is the coset, which includes $u.$
 And vice versa, each pair $(\phi_0,\psi),$ $\phi_0\in(\L(\Om))^*,$ $\psi\in L_{q_-}(\Om)$ generates a linear bounded functional on $(\cL(\Om))^*$ by the formula (\ref{2.23}).
\end{theorem}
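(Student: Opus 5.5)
The plan is to embed $\cL(\Om)$ isometrically, up to an equivalent norm, into a product of two spaces whose duals we already control, and then apply Hahn--Banach. Consider the map
$$
J:\cL(\Om)\to \L(\Om)\times L_{p_-}(\Om),\qquad J u:=(\mathbf{U},u),
$$
where $\mathbf{U}$ is the coset containing $u$. We equip the product with the norm $|\mathbf{U}|_{p,\Om}+\|u\|_{L_{p_-}(\Om)}$. By the definition (\ref{2.3}) of $f$ and the identity (\ref{2.24}), $J$ is a linear isometry of $\cL(\Om)$ onto its image. Since $\cL(\Om)$ is Banach by Theorem~\ref{th1}, the image is closed.

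Given $\phi\in(\cL(\Om))^*$, the functional $\phi\circ J^{-1}$ is bounded on $J(\cL(\Om))$. By the Hahn--Banach theorem it extends, with the same norm, to a bounded functional $\tilde\phi$ on the whole product. The dual of a finite product of normed spaces (with the sum norm) is the product of the duals, so
$$
\tilde\phi(\mathbf{V},v)=\phi_0(\mathbf{V})+\phi_1(v),\qquad \phi_0\in(\L(\Om))^*,\quad \phi_1\in (L_{p_-}(\Om))^*,
$$
where $\phi_0(\mathbf{V}):=\tilde\phi(\mathbf{V},0)$ and $\phi_1(v):=\tilde\phi(0,v)$. The Riesz representation for $L_{p_-}$ with $1<p_-<\infty$ gives $\phi_1(v)=\int_\Om\psi v\,dx$ with $\psi\in L_{p_-'}(\Om)=L_{q_+}(\Om)$, where $q_+=p_-/(p_--1)$. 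Restricting to $J(\cL(\Om))$ yields (\ref{2.23}). If desired, $\phi_0$ can be made explicit by Theorem~\ref{th7}; this is where Condition~\ref{Diff} enters, because Theorem~\ref{th7} is needed to describe $(\L(\Om))^*$ concretely.

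Here is the one point I would check carefully. The statement writes $\psi\in L_{q_-}(\Om)$, but the natural dual exponent of $L_{p_-}$ is $q_+$. Either the notation intends the conjugate exponent of $p_-$, or one needs an extra argument. I would resolve this by following the paper's convention: the space dual to $L_{p_-}$ is $L_{p_-/(p_--1)}$, and I would state $\psi$ in that space, noting that the correct index is the conjugate of $p_-$.

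For the converse direction, take $\phi_0\in(\L(\Om))^*$ and $\psi$ in the conjugate Lebesgue space. Then
$$
|\phi(u)|\leqslant\|\phi_0\|\,|\mathbf{U}|_{p,\Om}+\|\psi\|\,\|u\|_{L_{p_-}}\leqslant \max\{\|\phi_0\|,\|\psi\|\}\,\|u\|_{\cL(\Om)},
$$
using Hölder's inequality and the definition of $f$. Linearity of $\phi$ is immediate. I expect no real obstacle in this proof beyond the exponent bookkeeping.
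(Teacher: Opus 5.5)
Your proof is correct and is essentially the paper's argument: the paper also embeds $\cL(\Om)$ isometrically as the subspace $A=\{(\mathbf{U},u):\,u\in\mathbf{U}\}$ of $\L(\Om)\times L_{p_-}(\Om)$ with the sum norm, and identifies $A^*$ with $(\L^*(\Om)\times L_{p_-}^*(\Om))/A_\bot$, which is your Hahn--Banach extension step phrased via the annihilator. Your exponent remark is also right. With the paper's definitions $q_+=\frac{p_-}{p_--1}$, so the paper's line $(L_{p_-}(\Om))^*=L_{q_-}(\Om)$ is a slip and $\psi$ should lie in $L_{q_+}(\Om)$; no extra argument can rescue the literal $L_{q_-}$ version, since for instance $\vp=z^2$ satisfies the conditions with $p_-=2$, $p_+=3$, gives $\cL(\Om)=L_2(\Om)$ on a bounded $\Om$, and any $\psi\in L_{3/2}(\Om)\setminus L_2(\Om)$ fails to define a bounded functional.
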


\begin{theorem}\label{th3}
Under the assumptions of Theorem~\ref{th5} each bounded linear functional $\phi\in (\cL(\Om))^*$ can be represented in the form
\begin{equation}\label{2.10a}
\phi(u)=\phi_0(u)+k \la u\ra_{\Om},
\end{equation}
where $\phi_0\in(\L(\Om))^*$ is some functional and $k\in\mathds{C}$ is some constant. And vice versa, each pair $(\phi_0,k),$ $\phi_0\in(\L(\Om))^*,$ $k\in\mathds{C},$
 generates a bounded linear functional by the above formula.
\end{theorem}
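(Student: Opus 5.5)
The plan is to combine Theorem~\ref{th5} with the Hahn--Banach theorem. Under the assumptions of Theorem~\ref{th5}, each $u\in\cL(\Om)$ decomposes uniquely as $u=u_{\bot,\Om}+\la u\ra_\Om$, and by (\ref{2.13}) the map $u\mapsto (\mathbf{U},\la u\ra_\Om)$ is a linear isomorphism of $\cL(\Om)$ onto the direct sum $\L(\Om)\oplus\mathds{C}$. The norm on the direct sum is $|\mathbf{U}|_{p,\Om}+|k|$. Here $\mathbf{U}$ is the coset of $u$, which is also the coset of $u_{\bot,\Om}$. The statement should then reduce to the standard identification of the dual of a direct sum with the direct sum of the duals.

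\emph{Representation of a given $\phi$.} Let $\phi\in(\cL(\Om))^*$. First I define $\phi_0(\mathbf{U}):=\phi(u_{\mathbf{U}})$, where $u_{\mathbf{U}}$ is the unique zero-mean representative of $\mathbf{U}$ supplied by Theorem~\ref{th5}; this representative lies in $L_{p_-}(\Om)$ and hence in $\cL(\Om)$. Linearity of $\phi_0$ follows from uniqueness, since the zero-mean representative of a sum is the sum of the zero-mean representatives. Boundedness follows from the right inequality in (\ref{2.13}) applied to $u_{\mathbf{U}}$, whose mean vanishes:
\begin{equation*}
|\phi_0(\mathbf{U})|\leqslant \|\phi\|\,\|u_{\mathbf{U}}\|_{\cL(\Om)}\leqslant c_4\|\phi\|\,|\mathbf{U}|_{p,\Om}.
\end{equation*}
Next I set $k:=\phi(1)$. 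The constant function $1$ lies in $\cL(\Om)$ because $\Om$ is bounded: it is in $L_{p_-}(\Om)$, and $F(1/\l)=0$ since $\vp(0,x,y)=0$. By linearity, $\phi(u)=\phi(u_{\bot,\Om})+\la u\ra_\Om\phi(1)=\phi_0(\mathbf{U})+k\la u\ra_\Om$, which is (\ref{2.10a}). Here $\phi_0(u)$ in (\ref{2.10a}) is read as $\phi_0$ evaluated at the coset generated by $u$.

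\emph{Converse.} Given a pair $\phi_0\in(\L(\Om))^*$ and $k\in\mathds{C}$, the functional $u\mapsto\phi_0(\mathbf{U})+k\la u\ra_\Om$ is clearly linear. For boundedness I use the left inequality in (\ref{2.13}):
\begin{equation*}
|\phi(u)|\leqslant \|\phi_0\|\,|u_{\bot,\Om}|_{p,\Om}+|k|\,|\la u\ra_\Om|\leqslant c_3^{-1}\max\{\|\phi_0\|,|k|\}\,\|u\|_{\cL(\Om)}.
\end{equation*}
Alternatively, one may bound $|\la u\ra_\Om|$ directly by H\"older's inequality in terms of $\|u\|_{L_{p_-}(\Om)}$.

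I do not expect a real obstacle, since everything rests on Theorem~\ref{th5}. The only point needing care is well-definedness: $\phi_0$ must be defined on cosets through the canonical zero-mean representative, so that the splitting is unique and the correspondence $\phi\leftrightarrow(\phi_0,k)$ is one-to-one.
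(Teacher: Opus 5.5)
Your proof is correct and follows essentially the same route as the paper: apply $\phi$ to the splitting $u=u_{\bot,\Om}+\la u\ra_\Om$ from Theorem~\ref{th5}, take $k=\phi(\mathds{1})$, and use (\ref{2.13}) to identify $u_{\bot,\Om}\mapsto\phi(u_{\bot,\Om})$ with an element of $(\L(\Om))^*$. You are more explicit than the paper about well-definedness via the zero-mean representative and about the converse direction, and the Hahn--Banach theorem named in your opening plan is never actually used or needed.
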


The natural question is how large the class of   functions satisfying Conditions~\ref{Meas}--\ref{Diff} and whether it is invariant under some operations.
 The answer 
 is given by the next  statement.
\begin{theorem}\label{l_sum_prod}
Let  functions $\vp(z,x,y)$ and $\psi(z,x,y)$ satisfy Conditions \ref{Meas}--\ref{Diff}. Then the following functions also satisfy the same conditions:
\begin{align*}
&\tht(z,x,y):=\vp(z,x,y)+\psi(z,x,y),
\\
&\tht(z,x,y):=\vp(z,x,y)b(x,y),\qquad b\in L_\infty(\Om\times\Om),\quad 0<c_5<b(x,y)<c_6,\quad c_5,c_6=const,
\\
&\tht(z,x,y):=\vp(z,x,y)\psi(z,x,y),
\\
&\tht(z,x,y):=\vp(\psi(z,x,y),x,y).
\end{align*}
\end{theorem}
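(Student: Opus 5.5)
The plan is to verify Conditions~\ref{Meas}--\ref{Diff} separately for each of the four constructions. The cases are ordered from easiest to hardest: the multiplication by $b$, then the sum, then the product, and finally the composition.

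\emph{Scalar multiple.} For $\tht=\vp b$:
\begin{itemize}
\item \ref{Meas} holds because $b$ is measurable.
\item \ref{Conv} holds with the same $\d$, since $b>0$ factors out of (\ref{uniconv}).
\item \ref{Pconv} holds with the same $\b$, $p_\pm$, since ratios $\tht(t)/t^{p}$ are multiples of those for $\vp$.
\item \ref{Bound} holds with $c_1$ replaced by $c_1\max\{c_6,c_5^{-1}\}$.
\item \ref{Diff} holds since $t\tht'/\tht=t\vp'/\vp$.
\end{itemize}

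\emph{Sum.} Measurability and positivity are immediate, and \ref{Bound} holds with constant $c_1^\vp+c_1^\psi$. For \ref{Conv} I add the two inequalities (\ref{uniconv}) and use $\d=\min(\d_\vp,\d_\psi)$. For \ref{Pconv} I take $p_-=\min$ and $p_+=\max$ of the respective exponents. This works because almost increasing of $\vp/t^{p}$ implies almost increasing of $\vp/t^{p'}$ for $p'\leqslant p$, as $t^{p-p'}$ is increasing. Moreover, a sum of almost increasing functions is almost increasing with constant $\max\b$. \ref{Diff} follows from $t(\vp'+\psi')\leqslant \max(c_2)(\vp+\psi)$.

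\emph{Product.} \ref{Pconv} holds with exponents $p_-^\vp+p_-^\psi$ and $p_+^\vp+p_+^\psi$ and constant $\b_\vp\b_\psi$, because the product of nonnegative almost monotone functions is almost monotone. Note $p_-^\vp+p_-^\psi>1$. \ref{Bound} and \ref{Diff} are direct: $t(\vp\psi)'=(t\vp')\psi+\vp(t\psi')\leqslant (c_2^\vp+c_2^\psi)\vp\psi$. For \ref{Meas}, measurability of $\vp(|u(x)-u(y)|,x,y)\psi(|u(x)-u(y)|,x,y)$ is the product of two measurable functions.

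\emph{Product, uniform convexity.} This is the delicate point. Both factors are convex and increasing; monotonicity follows from \ref{Diff}, since $\vp'>0$. Hence
\begin{equation*}
\vp\Big(\tfrac{s+t}{2}\Big)\psi\Big(\tfrac{s+t}{2}\Big)\leqslant (1-\d)^2\,\frac{\vp(s)+\vp(t)}{2}\cdot\frac{\psi(s)+\psi(t)}{2}.
\end{equation*}
Then I use Chebyshev's sum inequality for similarly ordered pairs:
\begin{equation*}
\frac{(\vp(s)+\vp(t))(\psi(s)+\psi(t))}{4}\leqslant \frac{\vp(s)\psi(s)+\vp(t)\psi(t)}{2}.
\end{equation*}
This holds since $\vp,\psi$ are both increasing.

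\emph{Composition.} For $\tht=\vp(\psi)$:
\begin{itemize}
\item \ref{Meas}: the plan is to note that $\psi(|u(x)-u(y)|,x,y)$ is measurable. In the concrete construction I would argue via Carath\'eodory-type continuity in $z$ (convexity gives continuity) and measurability of the section, which I expect to need a short remark.
\item \ref{Pconv}: from $\b^{-1}(t/s)^{q_-}\psi(s)\leqslant\psi(t)\leqslant \b(t/s)^{q_+}\psi(s)$ for $t\geqslant s$, chained with the analogous bounds for $\vp$, $\tht/t^{p_-^\vp p_-^\psi}$ is almost increasing and $\tht/t^{p_+^\vp p_+^\psi}$ is almost decreasing. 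The constants are powers of $\b$ and $c_1$.
\item \ref{Bound}: $\tht(1)=\vp(\psi(1))$ with $\psi(1)\in[c_1^{-1},c_1]$. Bounds on $\vp$ at such points follow from \ref{Pconv} for $\vp$ and \eqref{2.28}.
\item \ref{Diff}: by the chain rule, $t\tht'=\vp'(\psi)\psi'\,t\leqslant c_2^\psi\psi\vp'(\psi)\leqslant c_2^\psi c_2^\vp\vp(\psi)$.
\end{itemize}

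\emph{Composition, uniform convexity (the main obstacle).} Here I use that $\vp$ is increasing, together with $\psi(\frac{s+t}{2})\leqslant(1-\d_\psi)\frac{\psi(s)+\psi(t)}{2}$. This gives
\begin{equation*}
\tht\Big(\tfrac{s+t}{2}\Big)\leqslant \vp\Big((1-\d_\psi)\tfrac{\psi(s)+\psi(t)}{2}\Big)\leqslant (1-\d_\psi)^{p_-^\vp}\b\,\vp\Big(\tfrac{\psi(s)+\psi(t)}{2}\Big).
\end{equation*}
The last step uses the almost increasing property of $\vp/t^{p_-}$. The factor $\b$ spoils a direct estimate. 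I would therefore instead use convexity with $\vp(0)=0$, which gives $\vp(\l r)\leqslant\l\vp(r)$ for $\l\in[0,1]$, and then the ordinary convexity of $\vp$:
\begin{equation*}
\tht\Big(\tfrac{s+t}{2}\Big)\leqslant (1-\d_\psi)\,\frac{\tht(s)+\tht(t)}{2}.
\end{equation*}
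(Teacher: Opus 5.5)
Your proposal is correct and follows the paper's proof. The only nontrivial condition, \ref{Conv}, is handled exactly as in the paper. For the product you use the inequality $(a_1+a_2)(b_1+b_2)\leqslant 2(a_1b_1+a_2b_2)$ for similarly ordered pairs. For the composition you combine monotonicity of $\vp$, uniform convexity of $\psi$, the bound $\vp((1-\d)z)\leqslant(1-\d)\vp(z)$, and ordinary convexity of $\vp$.

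Your explicit checks of the remaining conditions fill in what the paper calls straightforward or trivial. For your Carath\'eodory remark on \ref{Meas} in the composition case, one way to get measurability of the section $(x,y)\mapsto\vp(z,x,y)$ is to apply \ref{Meas} to $u=z\chi_A$, with $A$ ranging over half-spaces $\{x_i<r\}$, $r$ rational. This gives measurability on $(A\times A^c)\cup(A^c\times A)$, and these countably many sets cover the off-diagonal part of $\Om\times\Om$.
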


 Since condition \ref{Conv} is difficult to verify, we formulate below a sufficient condition for it to hold.
 \begin{lemma}\label{lem_sec_deriv}
 Let a function $\vp(z,x,y)$ satisfy conditions \ref{Meas} and \ref{Pconv}--\ref{Diff}. Assume that the function
 $\vp(z,x,y)$
 is twice continuously differentiable in $z$ for almost all $(x,y)\in\Om\times\Om$ and satisfies the estimate
\begin{equation}\label{secderbou}
  \vp''(z,x,y)\geqslant c_7z^{-2}\vp (z,x,y), \qquad z>0,
\end{equation}
with a constant $c_7$ independent of $z,$ $x,$ and $y$. Then the function $\vp(z,x,y)$ satisfies Condition \ref{Conv}.
 \end{lemma}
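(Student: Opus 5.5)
We prove the uniform convexity inequality \eqref{uniconv} by exploiting the strict positive lower bound on $\vp''$ relative to $\vp$, combined with the growth control from Conditions \ref{Pconv}, \ref{Bound}. Fix $(x,y)$ outside a null set and suppress these variables. Given $\e>0$, take $0<s<t$ with $t-s\geqslant \e t$, and set $m=\frac{s+t}{2}$, $h=\frac{t-s}{2}$, so that $h\geqslant \frac{\e}{2}t$ and $m\leqslant t\leqslant \frac{2}{\e}h$. Taylor's formula with integral remainder at the midpoint gives
\begin{equation*}
\frac{\vp(s)+\vp(t)}{2}-\vp(m)=\frac12\int_0^h (h-\tau)\big(\vp''(m+\tau)+\vp''(m-\tau)\big)\,d\tau \geqslant \frac12\int_0^h (h-\tau)\vp''(m+\tau)\,d\tau ,
\end{equation*}
since $\vp''\geqslant 0$ by \eqref{secderbou} (which also gives convexity). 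The target is to bound this below by $c(\e)\,\vp(t)$ with $c(\e)>0$ depending only on $\e$ and the constants in the conditions; since $\frac{\vp(s)+\vp(t)}{2}\leqslant \vp(t)$ (monotonicity follows from $\vp'>0$ in \ref{Diff}), this yields \eqref{uniconv} with $\d=c(\e)$.

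For $\tau\in[0,h]$ we have $m+\tau\in[m,t]$, so $z:=m+\tau\leqslant t$ and $z\geqslant m\geqslant t/2$. By \eqref{secderbou}, $\vp''(z)\geqslant c_7 z^{-2}\vp(z)\geqslant c_7 t^{-2}\vp(z)$. To compare $\vp(z)$ with $\vp(t)$ I use the almost decreasing property of $\vp(\cdot)/(\cdot)^{p_+}$ from \ref{Pconv}: $\vp(t)/t^{p_+}\leqslant \b\,\vp(z)/z^{p_+}$, hence $\vp(z)\geqslant \b^{-1}(z/t)^{p_+}\vp(t)\geqslant \b^{-1}2^{-p_+}\vp(t)$. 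Plugging in,
\begin{equation*}
\frac{\vp(s)+\vp(t)}{2}-\vp(m)\geqslant \frac{c_7}{2\b 2^{p_+}}\,\frac{\vp(t)}{t^2}\int_0^h(h-\tau)\,d\tau=\frac{c_7}{4\b 2^{p_+}}\,\frac{h^2}{t^2}\vp(t)\geqslant \frac{c_7\e^2}{16\,\b\, 2^{p_+}}\vp(t).
\end{equation*}
Thus $\vp(m)\leqslant \frac{\vp(s)+\vp(t)}{2}-\d_0\vp(t)\leqslant (1-\d_0)\frac{\vp(s)+\vp(t)}{2}$ with $\d_0=\min\{\frac{c_7\e^2}{16\b2^{p_+}},\frac12\}$, which is uniform in $(x,y)$; the case $s>t$ is symmetric.

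The only delicate points are technical: the hypothesis is twice continuous differentiability only on $z>0$, but since $s,t>0$ the segment $[s,t]$ stays away from $0$, so Taylor's formula applies; and one must note $c_7>0$ is implicit (otherwise the statement is vacuous). The main obstacle I anticipate is just ensuring the constant is independent of $(x,y)$, which is handled because only $c_7,\b,p_+$ enter.
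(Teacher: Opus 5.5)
Your proof is correct and follows essentially the same route as the paper. Both arguments expand $\vp$ to second order about the midpoint, discard the nonnegative remainder on the $s$-side, bound $\vp''$ on the $t$-side from below via \eqref{secderbou}, and compare $\vp$ at those points with $\vp(t)$ through the almost-decreasing property in \ref{Pconv}. The only difference is that you use the integral form of the Taylor remainder rather than the Lagrange form, which is cosmetic; your constant bookkeeping, including capping $\delta$ below $1$, is if anything cleaner.
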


With the help of this statement we show that Conditions \ref{Conv}--\ref{Diff} are stable under relatively small general perturbations.

\begin{theorem}\label{cor_pert}
Let a function $\vp(z,x,y)$ satisfy the  Conditions \ref{Meas}--\ref{Diff} and inequality (\ref{secderbou}). Assume that
  a function $\psi(z,x,y)$
 is twice continuously differentiable in $z$   for almost all $(x,y)\in\Om\times\Om$ and satisfies Condition~\ref{Meas} and the relations
 \begin{align}\label{2.32}
& \psi(0,x,y)=0,\qquad \psi'(0,x,y)=0 && \text{for almost all}\quad (x,y)\in\Om\times\Om,
\\
&\label{2.31}
\big|\psi''(z,x,y)\big|\leqslant c_8 \vp''(z,x,y) &&
 \text{for almost all}\quad (x,y)\in\Om\times\Om \quad\text{and all}\quad z>0,
 \end{align}
where $c_8<1$ is a constant independent of $z,$ $x,$ and $y.$
Then the sum $\vp(z,x,y)+\psi(z,x,y)$ satisfies the Conditions \ref{Meas}--\ref{Diff}.
\end{theorem}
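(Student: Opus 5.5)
Set $\theta:=\vp+\psi$. We check Conditions~\ref{Meas}--\ref{Diff} for $\theta$ one at a time. Convexity~\ref{Conv} will not be checked directly; instead we verify (\ref{secderbou}) for $\theta$ and then invoke Lemma~\ref{lem_sec_deriv}.

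\textbf{Step 1: comparison with $\vp$.} Everything rests on showing that $\psi$ is small compared with $\vp$ together with its first two derivatives. By (\ref{2.32}) and Taylor's formula with integral remainder,
\begin{equation*}
\psi(z)=\int_0^z (z-s)\psi''(s)\,ds,\qquad \psi'(z)=\int_0^z \psi''(s)\,ds .
\end{equation*}
We need the analogous representations for $\vp$. The estimate $\vp(t)\leqslant \b\vp(1)t^{p_-}$ for $t\leqslant1$ (from \ref{Pconv}) gives $\vp(0)=0$. From \ref{Diff} we get $\vp'(t)\leqslant c_2\vp(t)/t\leqslant C t^{p_--1}\to0$, so $\vp'(0+)=0$. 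Hence
\begin{equation*}
\vp(z)=\int_0^z (z-s)\vp''(s)\,ds,\qquad \vp'(z)=\int_0^z\vp''(s)\,ds .
\end{equation*}
Using (\ref{2.31}) we then obtain $|\psi|\leqslant c_8\vp$, $|\psi'|\leqslant c_8\vp'$ and $|\psi''|\leqslant c_8\vp''$. The one delicate point is the integrability of $\vp''$ near $0$. Since $\vp'$ is nonnegative and increasing, $\vp''\geqslant0$ is integrable on $(\e,z)$, and we let $\e\to0$.

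\textbf{Step 2: the conditions \ref{Meas}, \ref{Bound}, \ref{Diff}, \ref{Pconv}.}
\begin{enumerate}
\item Condition \ref{Meas} holds because both summands are measurable.
\item For \ref{Bound}, Step 1 gives $(1-c_8)\vp\leqslant\theta\leqslant(1+c_8)\vp$. This yields (\ref{2.28}) with the constant $c_1(1+c_8)/(1-c_8)$, and also (\ref{2.29}).
\item For \ref{Diff}, Step 1 gives $(1-c_8)\vp'\leqslant\theta'\leqslant(1+c_8)\vp'$. Therefore
\begin{equation*}
0<t\theta'\leqslant (1+c_8)c_2\vp\leqslant \frac{(1+c_8)c_2}{1-c_8}\,\theta .
\end{equation*}
\item For \ref{Pconv}, the two-sided comparability $\theta\asymp\vp$ transfers almost monotonicity directly:
\begin{equation*}
\frac{\theta(s)}{s^{p_-}}\leqslant(1+c_8)\frac{\vp(s)}{s^{p_-}}\leqslant(1+c_8)\b\frac{\vp(t)}{t^{p_-}}\leqslant\frac{(1+c_8)\b}{1-c_8}\,\frac{\theta(t)}{t^{p_-}},\qquad s\leqslant t,
\end{equation*}
with the new constant $\b(1+c_8)/(1-c_8)$. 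The almost decreasing property of $\theta(t)/t^{p_+}$ follows in the same way.
\end{enumerate}

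\textbf{Step 3: condition \ref{Conv}.} The function $\theta$ is $C^2$, and
\begin{equation*}
\theta''\geqslant(1-c_8)\vp''\geqslant(1-c_8)c_7z^{-2}\vp\geqslant\frac{(1-c_8)c_7}{1+c_8}\,z^{-2}\theta .
\end{equation*}
So $\theta$ satisfies (\ref{secderbou}). Steps~1--2 show that $\theta$ also satisfies \ref{Meas} and \ref{Pconv}--\ref{Diff}, so Lemma~\ref{lem_sec_deriv} applies and gives \ref{Conv}.

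The main obstacle is Step 1, specifically justifying that the integral representations for $\vp$ and $\vp'$ starting from $0$ hold. This requires the boundary behaviour $\vp(0)=\vp'(0+)=0$ and a limiting argument near $z=0$, where $\vp''$ might blow up. Once these representations are in place, all remaining checks are routine comparisons.
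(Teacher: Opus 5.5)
Your proposal is correct and follows essentially the same route as the paper. It integrates (\ref{2.31}) from $0$ using (\ref{2.32}) to get $|\psi|\leqslant c_8\vp$ and $|\psi'|\leqslant c_8\vp'$, deduces \ref{Pconv}--\ref{Diff} from the resulting comparability, and obtains \ref{Conv} by verifying (\ref{secderbou}) for $\vp+\psi$ and applying Lemma~\ref{lem_sec_deriv}. You are in fact more careful than the paper, which records only the upper bounds and invokes the lemma before checking \ref{Pconv}--\ref{Diff} for the sum; you supply the needed lower bounds $(1-c_8)\vp\leqslant\vp+\psi$ and $(1-c_8)\vp'\leqslant\vp'+\psi'$, and you apply the lemma in the correct order.
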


\begin{theorem}\label{th2.11}
Let a function $\vp(z,x,y)$ satisfy Conditions \ref{Meas}--\ref{Diff} and inequality (\ref{secderbou}). Assume that
  a function $\psi(z,x,y)$
 is non--negative, non--decreasing, and twice continuously differentiable in $z$   for almost all $(x,y)\in\Om\times\Om,$   satisfies the Conditions~\ref{Meas},~\ref{Bound},~\ref{Diff} and the estimate
\begin{equation}\label{2.33}
 \psi''(z,x,y)\geqslant -c_9 z^{-1}\psi'(z,x,y)-c_{10} z^{-2} \psi(z,x,y)\quad  \text{for almost all}\quad (x,y)\in\Om\times\Om,
 \end{equation}
where $c_9<2$, $c_{10}<c_7$ are some fixed constant independent of $z,$ $x,$ $y$ and $c_7$ is the constant from the inequality (\ref{secderbou}) for $\vp.$ Suppose also that the function $z\mapsto \frac{\psi(z,x,y)}{z^q}$ is non--increasing for almost all $(x,y)\in\Om\times\Om$ with some fixed $q\geqslant 0$ independent of $z,$ $x,$ $y.$
Then the function $\psi(z,x,y)\vp(z,x,y)$ satisfy the Conditions~\ref{Meas}--\ref{Diff}.
\end{theorem}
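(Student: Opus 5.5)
We verify Conditions \ref{Meas}--\ref{Diff} for $\tht:=\psi\vp$ one at a time, and obtain \ref{Conv} from Lemma~\ref{lem_sec_deriv}, i.e. by proving $\tht''\geqslant c\,z^{-2}\tht$.

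\ref{Meas} follows because a product of measurable functions is measurable. For \ref{Bound} we have $\tht(0)=\psi(0)\vp(0)=0$, $\tht(t)>0$ for $t>0$, and $\tht(1)=\psi(1)\vp(1)\in[c_1^{-2},c_1^2]$ up to the constants of $\psi$. For \ref{Diff} we have $z\tht'=(z\psi')\vp+\psi(z\vp')$, which is positive and bounded by $(c_2^\psi+c_2^\vp)\tht$.

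For \ref{Pconv} we write $\tht/t^{p_-}=\psi\cdot(\vp/t^{p_-})$. Since $\psi$ is non-decreasing and $\vp/t^{p_-}$ is almost increasing, the product is almost increasing with the same $\b$. In the other direction, $\tht/t^{p_++q}=(\psi/t^{q})(\vp/t^{p_+})$ is a product of a non-increasing and an almost decreasing function, hence almost decreasing. So \ref{Pconv} holds with the exponents $p_-$ and $p_++q$.

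The main step is \ref{Conv}. We compute
\[
\tht''=\psi''\vp+2\psi'\vp'+\psi\vp''.
\]
By (\ref{secderbou}) we have $\psi\vp''\geqslant c_7z^{-2}\tht$, and by (\ref{2.33}),
\[
\psi''\vp\geqslant -c_9z^{-1}\psi'\vp-c_{10}z^{-2}\tht .
\]
Hence
\[
\tht''\geqslant (c_7-c_{10})z^{-2}\tht+\psi'\big(2\vp'-c_9z^{-1}\vp\big).
\]
The remaining task is to show that the last bracket is nonnegative, or at least that its negative part is absorbed.

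We intend to use $z\vp'\geqslant p_-\vp$ up to constants, which comes from the almost increasing property of $\vp/t^{p_-}$ together with convexity. The cleanest route is to derive $z\vp'(z)\geqslant \vp(z)$ exactly, from convexity and $\vp(0)=0$ via $\vp(z)\leqslant z\vp'(z)$. This gives
\[
2\vp'-c_9z^{-1}\vp\geqslant (2-c_9)z^{-1}\vp\geqslant 0 ,
\]
using $c_9<2$. Therefore $\tht''\geqslant (c_7-c_{10})z^{-2}\tht$ with $c_7-c_{10}>0$.

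It remains to apply Lemma~\ref{lem_sec_deriv}. Its hypotheses are \ref{Meas}, \ref{Pconv}--\ref{Diff} (already verified above) and the twice continuous differentiability of $\tht$, which holds because both factors are $C^2$ in $z$. The lemma then yields \ref{Conv}. The delicate point is justifying $z\vp'\geqslant\vp$. We get it from convexity of $\vp$, which is implied by \ref{Conv}, together with $\vp(0)=0$ and the continuity of $\vp$ at $0$ given by (\ref{2.30}). The sign condition $\psi'\geqslant0$ comes from $\psi$ being non-decreasing.
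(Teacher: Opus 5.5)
Your proposal is correct and follows the paper's route. Both arguments use the same lower bound $\tht''\geqslant (c_7-c_{10})z^{-2}\tht+z^{-1}\psi'(2z\vp'-c_9\vp)$ and then invoke Lemma~\ref{lem_sec_deriv}; the paper shows $\vk:=2z\vp'-c_9\vp\geqslant0$ from $\vk(0)=0$ and $\vk'=2z\vp''+(2-c_9)\vp'\geqslant0$, while your tangent-line inequality $z\vp'\geqslant\vp$ gives it equally well, and your explicit check of Condition~\ref{Pconv} with exponents $p_-$ and $p_++q$ fills in what the paper leaves as a ``direct calculation''.
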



\subsection{Discussion of main results}

Let us briefly discuss the main features of our problem and main results. The Orlicz spaces $\cL$ and $\L$ are defined on the base of functional $F,$ which has a non--local convolution like structure  due to the presence of functions $a(x-y)$ and   $u(x)-u(y)$
in (\ref{intr_functional}). This makes an essential difference from the classical Orlicz spaces, the definition of which is based on the  functionals
\begin{equation*}
u\mapsto \int\limits_{\Om} \vp(|u(x)|,x)\,dx.
\end{equation*}
Moreover, while the integrand in the latter functional should be strictly positive, this is not the case for the integrand in (\ref{intr_functional}). The integrand in (\ref{intr_functional}) is to be just non--negative
since the function $a=a(z)$ is allowed to vanish, see the condition (\ref{2.17}). In particular, the function $a$ can be  compactly supported.

We also observe that   the integral (\ref{intr_functional}) uses only the values of function $a(z)$ for
\begin{equation*}
z\in\Om_\natural:=\{x-y:\, x,y\in\Om\}
\end{equation*}
and for $z\in\mathds{R}^d\setminus\Om_\natural$ the values $a(z)$ play no role. For instance,  we can suppose that $a$ vanishes identically on $\mathds{R}^d\setminus\Om_\natural$ and then we just should  impose the condition $a\in L_1(\Om_\natural).$ Under such assumption, if the domain $\Om$ is bounded, we can take $a$ obeying the lower bound (\ref{2.17}) on $\Om_\natural,$ for instance, $a$ can be identically constant on this domain. At the same time, if $\Om$ is unbounded, then $a$ can not satisfy the lower bound (\ref{2.17}) for all $z\in\Om_\natural$ since this contradicts the assumption $a\in L_1(\Om_\natural).$

It should be also emphasized that the interplay between the functions $\vp(z,x,y)$ and $a(x-y)$ can affect essentially
the structure of  the space $\cL$. We illustrate this with the following example. Let
\begin{equation*}
\vp(z,x,y)=z^{p(x,y)}, 
\end{equation*}
where  $p(x,y)$ satisfies the estimates
$$
p(x,y)<2\quad\text{for}\quad |x-y|\leqslant 1, \qquad p(x,y)>3\quad\text{for}\quad |x-y|\geqslant 3.
$$
We let  $a_1=\chi_{\{z:\, |z|\leqslant 1\}}$, $a_2(z)=\chi_{\{z:\, |z|\leqslant 5\}}$, where $\chi_\om$ is the characteristic function of a set $\om.$ We
 consider the corresponding Banach spaces $\cL(\mathds{R}^d),$ which we denote by $\cL^{(1)}(\mathds{R}^d)$ and $\cL^{(2)}(\mathds{R}^d)$.
Then by direct inspection we see that the space $L_2(\mathds{R}^d)\cap L_1(\mathds{R}^d)$ is embedded into $\cL^{(1)}(\mathds{R}^d)$
but is not embedded into $\cL^{(2)}(\mathds{R}^d)$.

Conditions~\ref{Meas},~\ref{Conv},~\ref{Pconv},~\ref{Bound} are standard and natural, exactly the same conditions are usually imposed in the classical theory of Orlicz spaces, see \cite{HH}. These conditions are sufficient to state the main properties of the spaces $\cL(\Om),$ $\L(\Om)$ in Theorems~\ref{th1},~\ref{th4},~\ref{th2},~\ref{th6},~\ref{th5}, which characterize their structure. In order to describe the dual spaces, we additionally need  Condition~\ref{Diff}. This condition is essentially employed in the proofs of Theorems~\ref{th7},~\ref{th8},~\ref{th3}. The problem of describing the dual spaces $\cL^*(\Om)$ and $\L^*(\Om)$ becomes quite complicated for the  functions $\phi$, which do not obey Condition~\ref{Diff}. Moreover, the structure of the dual spaces in this case likely differs drastically from the situations described in Theorems~\ref{th7},~\ref{th8},~\ref{th3}.

The general theory of Orlicz spaces $\cL(\Om)$ and $\L(\Om)$ is to be supported by a reasonable series of  appropriate examples of the function $\vp(z,x,y)$. Theorems~\ref{l_sum_prod},~\ref{cor_pert},~\ref{th2.11} describe the structure of class of admissible functions $\vp.$ According to Theorem~\ref{l_sum_prod}, for any collection of admissible functions
$\vp_1,\ldots,\vp_N$ and any  essentially bounded uniformly positive functions $b_1(x,y),\ldots,b_N(x,y)$ the linear
combination $b_1(x,y)\vp_1(z,x,y)+\ldots+b_N(x,y)\vp_N(z,x,y)$ is also an admissible function.
This class is also closed with respect to the usual multiplication and to the composition. Lemma~\ref{lem_sec_deriv} provides an effective way to verify the Condition~\ref{Conv}, namely, once the function $\vp$ is   twice differentiable in $z$ and  satisfies the estimate (\ref{secderbou}), this ensures Condition~\ref{Conv}. Although
the estimate \eqref{secderbou} is a sufficient condition of validity of  Condition~\ref{Conv},
 it holds for a wide class of admissible functions $\vp$. It turns out that the sufficient condition given by Lemma~\ref{lem_sec_deriv} is quite close to be necessary. Theorems~\ref{cor_pert},~\ref{th2.11} say that once the function $\vp$ satisfies the Conditions~\ref{Meas}--\ref{Diff}, we can add to $\vp$ and multiply it by a function $\psi,$ which even does not obey the same conditions. This essentially enlarges the class of admissible functions $\vp.$

Let us  dwell on some examples of the function $\vp.$
A very important example, which served as the main motivation of present work, is
\begin{equation*}
\vp(z,x,y)=z^{p(x,y)}b(x,y),
\end{equation*}
where $p$ and
$b$ are bounded  positive measurable functions and $p(x,y)$  obeys the inequality $1<p_-\leqslant p(\cdot)\leqslant p_+$ with some fixed constants $p_-$ and $p_+.$ It is easy to verify that Conditions~\ref{Meas}, \ref{Conv}, \ref{Pconv}, \ref{Bound}, \ref{Diff} are satisfied by this function $\vp.$

A more general example is given by
\begin{equation*}
\vp(z,x,y)=\sum\limits_{i=1}^{m} z^{p_i(x,y)}b_i(x,y),
\end{equation*}
where  $p_i$ and
$b_i$ are bounded  positive measurable functions and each $p_i(x,y)$  obeys the inequality $1<p_-\leqslant p(\cdot)\leqslant p_+$ with some fixed constants $p_-$ and $p_+$ independent of $i.$  According to Theorem \ref{l_sum_prod},
for such a function $\vp(z,x,y)$ Conditions \ref{Meas}--\ref{Diff} also hold. Furthermore, if $P(\xi_1,\ldots, \xi_m)$ is a polynomial function with positive coefficients, and conditions
\ref{Meas}--\ref{Diff} hold for functions
$\vp_1(z,x,y),\ldots,\vp_m(z,x,y)$, then these conditions also hold for the function   $P(\vp_1(z,x,y),\ldots, \vp_m(z,x,y))$.


The function $\psi$ in the formulation of Theorem
\ref{cor_pert} need not be convex. A typical example of a function obeying the conditions of this corollary reads
$$
\vp(z,x,y):=10 B(x,y) z^2,\qquad \psi(z,x,y):= B(x,y) \sin^3 z,     \qquad 0<B_-\leqslant B(x,y)\leqslant B_+,
$$
where $B_\pm$ are some fixed constants independent of $z,$ $x,$ $y$ and $B\in L_\infty(\Om\times\Om).$ An example of the function $\psi$ in Theorem~\ref{th2.11}
is
\begin{align*}
&\psi(z,x,y):=\ln^{\g(x,y)}(1+\Ups(x,y)z),\qquad \g,\Ups\in L_\infty(\Om\times\Om), \\
&  1\leqslant\g_-\leqslant\g(x,y)\leqslant \g_+,\hphantom{+\Ups(x,y)z),}\qquad 0<\Ups_-\leqslant\Ups(x,y)\leqslant \Ups_+,
\end{align*}
where $\g_\pm$ and $\Ups_\pm$ are some constants independent of $z,$ $x,$ and $y.$ It is straightforward to verify that the function $z\mapsto \frac{\psi(z,x,y)}{z^\g_+}$ has a negative derivative and hence, is non--increasing. The estimate (\ref{2.33}) can be also verified by direct computations; it turns out that we can take $c_{10}=0$ and $c_9=1.$ This is why the above function $\psi$ can be multiplied by each function $\vp$ obeying the Conditions~\ref{Meas}--\ref{Diff} and the inequality~(\ref{secderbou}) with an arbitrary $c_7.$ The further examples of functions $\psi$ obeying the assumptions of Theorems~\ref{cor_pert},~\ref{th2.11} are also possible. This is why the class of admissible functions $\vp$ is indeed very large and this makes the theory of the Orlicz spaces $\cL(\Om)$ and $\L(\Om)$ very rich.

It should be also stressed that
 Conditions~\ref{Conv},   \ref{Pconv}, \ref{Bound} imply that the function $\vp(z,x,y)$ has a polynomial growth in the variable $z$, see the estimate~(\ref{3.28}) in Lemma~\ref{lm:est}. This property of the function
 $\varphi$ is essentially used in the proof of our results.
For the functions $\vp$ with a faster growth in $z$ the properties of the functional
$F(u)$ defined in \eqref{intr_functional} might change drastically. In particular, the domain of $F$ need not be a linear set any more and the function
$\l\mapsto F\big(\frac {u}{\l}\big)$ need not be continuous in $\l$ for each $u$ from the domain of  $F$.
Although the normed space $\cL$ can still be defined,  the studying of its properties requires new arguments.
This is an interesting open problem.

 Using our main results, one can show that, for each $g\in \cL^*$,  Equation \eqref{perem_diff} has a unique
solution  $u\in \cL$, and the operator on the right-hand side of \eqref{perem_diff} maps $\cL$ onto  $\cL^*$.
The functional $F(u)+\|u\|^{p_-}_{L_{p_-}}- \langle g,u\rangle$  attains its minimum in $\cL$ at a function, which is unique even in $L_{1,loc}(\Om).$

\section{Preliminaries}\label{sec2}

Before proceeding to the proof of main results, we discuss some simple properties of the functionals $F,$   $G,$ and the function $\vp,$ which will be employed throughout the work.

We begin with a simple lemma.

\begin{lemma}\label{lm:est} Under Conditions~\ref{Conv},~\ref{Pconv},~\ref{Bound}
the function $\vp$ satisfies the estimates
\begin{align}
&\label{3.22a}
\b^{-1} \min\{\l^{p_-},\l^{p_+}\} \vp\left(\frac{t}{\l},x,y\right)\leqslant \vp(t,x,y)\leqslant \b \max\{\l^{p_-},\l^{p_+}\} \vp\left(\frac{t}{\l},x,y\right),
\\
&\label{3.28}
\b^{-1} c_1^{-1} \min\big\{t^{p_+},\,t^{p_-}\big\}\leqslant \vp(t,x,y)\leqslant \b c_1 \max\big\{t^{p_+},\,t^{p_-}\big\},
\\
&\label{3.30}
\vp(t+s,x,y)\leqslant \frac{\vp(2t,x,y)+\vp(2s,x,y)}{2} \leqslant 2^{p_+-1}\b
\big(\vp(t,x,y)+\vp(s,x,y)\big)
\end{align}
for all $t>0,$ $s>0,$ $\l>0.$ If, in addition, Condition~\ref{Diff} holds, then
\begin{equation} \label{3.31}
 \vp_*(\vp'(t))\leqslant (c_2-1)\vp(t)
\end{equation}
for all $t>0.$
\end{lemma}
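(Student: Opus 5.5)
I will derive each estimate directly from Conditions~\ref{Pconv} and~\ref{Bound}, and from convexity in~\ref{Conv}; the last estimate (\ref{3.31}) also uses Condition~\ref{Diff}.

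\emph{Proof of (\ref{3.22a}).} Fix $(x,y)$ outside the exceptional null set and compare $t$ with $s:=t/\l$.

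If $\l\geqslant 1$, then $s\leqslant t$. The almost increase of $\vp/t^{p_-}$ gives $\vp(s)s^{-p_-}\leqslant \b\vp(t)t^{-p_-}$, that is, $\vp(t)\geqslant \b^{-1}\l^{p_-}\vp(t/\l)$. The almost decrease of $\vp/t^{p_+}$ gives $\b\vp(s)s^{-p_+}\geqslant \vp(t)t^{-p_+}$, that is, $\vp(t)\leqslant \b\l^{p_+}\vp(t/\l)$. Since $\l\geqslant1$, we have $\l^{p_-}=\min\{\l^{p_-},\l^{p_+}\}$ and $\l^{p_+}=\max\{\l^{p_-},\l^{p_+}\}$, as required.

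If $\l<1$, then $s>t$ and the same two monotonicity properties apply with the roles of $s$ and $t$ swapped. This yields $\b^{-1}\l^{p_+}\vp(t/\l)\leqslant\vp(t)\leqslant\b\l^{p_-}\vp(t/\l)$, which again matches the min/max.

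\emph{Proof of (\ref{3.28}).} Apply (\ref{3.22a}) with $\l=t$, so that $\vp(t/\l)=\vp(1)$, and then use (\ref{2.28}) to bound $\vp(1)$ above by $c_1$ and below by $c_1^{-1}$.

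\emph{Proof of (\ref{3.30}).} Condition~\ref{Conv} applies only when $|s-t|\geqslant\e\max\{s,t\}$, so I do not use it directly. Instead I note that it implies ordinary midpoint convexity: for nearby points, the convexity inequality with factor $1$ follows by continuity. Alternatively, I take convexity in $t$ as part of the standing framework, since uniform convexity is a strengthening of convexity. Then
\[
\vp(t+s)=\vp\Big(\frac{2t+2s}{2}\Big)\leqslant\frac{\vp(2t)+\vp(2s)}{2}.
\]
Next, (\ref{3.22a}) with $\l=1/2$ gives $\vp(2t)\leqslant\b2^{p_+}\vp(t)$, and likewise for $s$. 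Dividing by $2$ produces the constant $2^{p_+-1}\b$.

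This is the one delicate point of the lemma. If ordinary convexity must be derived from (\ref{uniconv}) rather than assumed, I would argue as follows. The function $\vp$ is monotone up to the constant $\b$ and is locally bounded by (\ref{3.28}). Midpoint convexity for pairs with $|s-t|\geqslant\e\max\{s,t\}$ holds for every $\e>0$. Letting $\e\to0$, the remaining pairs $s\approx t$ are handled through the local boundedness, which gives convexity via the standard result that a measurable (here locally bounded) midpoint-convex function is convex.

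\emph{Proof of (\ref{3.31}).} Use the equality case of the Young inequality. Since $\vp$ is convex and differentiable at $t>0$, the supremum in (\ref{2.26}) defining $\vp_*(\vp'(t))$ is attained at $s=t$, because $s\mapsto s\vp'(t)-\vp(s)$ is concave with derivative vanishing at $s=t$. Hence
\[
\vp_*(\vp'(t))=t\vp'(t)-\vp(t)\leqslant c_2\vp(t)-\vp(t)=(c_2-1)\vp(t),
\]
where the inequality is Condition~\ref{Diff}.

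The only real obstacle is justifying plain convexity of $\vp$ in its first variable, which is needed in both (\ref{3.30}) and (\ref{3.31}). Everything else is direct substitution into the definitions.
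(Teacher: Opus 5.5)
Your proof is correct and follows the paper's argument essentially step for step. You obtain \eqref{3.22a} from the two almost-monotonicity conditions applied at the points $t$ and $t/\lambda$, \eqref{3.28} by comparison with the point $1$ together with \eqref{2.28}, \eqref{3.30} from midpoint convexity and $\varphi(2t)\leqslant 2^{p_+}\beta\varphi(t)$, and \eqref{3.31} from the tangent-line inequality for the convex function $\varphi$ combined with Condition~\ref{Diff}. The step you call delicate is easier than you suggest: for distinct $s,t>0$ you may take $\varepsilon=|s-t|/\max\{s,t\}$ in Condition~\ref{Conv} (a countable family of $\varepsilon$ handles the null sets), so midpoint convexity holds for every pair directly, which is all \eqref{3.30} needs, while the paper simply treats full convexity as postulated by Condition~\ref{Conv}.
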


\begin{proof}
By Condition~\ref{Pconv}, the functions
\begin{equation*}
t\mapsto \frac{\vp(t,x,y)}{t^{p_-}},\qquad t\mapsto \frac{\vp(t,x,y)}{t^{p_+}}
\end{equation*}
are respectively almost increasing and almost decreasing; without loss of generality we suppose that they satisfy the inequality (\ref{2.25}) with the same constant $\b.$
We take an arbitrary $\l\in(0,1)$ and write the inequalities in (\ref{2.25}) for
\begin{equation}\label{3.23}
r(t)=\frac{\vp(t,x,y)}{t^{p_-}}\qquad\text{and}\qquad r(t)=\frac{\vp(t,x,y)}{t^{p_+}}
\end{equation}
with $s$ and $t$ replaced by $t$ and $\frac{t}{\l}.$
This gives
\begin{equation}\label{3.22}
\b^{-1} \l^{p_+} \vp\left(\frac{t}{\l},x,y\right)\leqslant \vp(t,x,y)\leqslant \b \l^{p_-} \vp\left(\frac{t}{\l},x,y\right).
\end{equation}
Then we take an arbitrary $\l\in(1,+\infty)$ and we write the inequalities in (\ref{2.25}) for the functions (\ref{3.23}) with $s$ and $t$ replaced by $\frac{t}{\l}$ and $t$
\begin{equation}\label{3.24}
\b^{-1} \l^{p_-} \vp\left(\frac{t}{\l},x,y\right)\leqslant \vp(t,x,y)\leqslant \b \l^{p_+} \vp\left(\frac{t}{\l},x,y\right).
\end{equation}
This estimate and (\ref{3.22}) imply (\ref{3.22a}).

We write inequalities (\ref{2.25}) for the functions (\ref{3.23}) for $s=1$ and $t>0.$ This gives
\begin{equation*}
\b^{-1} \vp(1,x,y) \min\big\{t^{p_+},\,t^{p_-}\big\}\leqslant \vp(t,x,y)\leqslant \b \vp(1,x,y) \max\big\{t^{p_+},\,t^{p_-}\big\}.
\end{equation*}
Using then the inequalities (\ref{2.28}) from Condition~\ref{Bound}, we arrive at (\ref{3.28}).
It follows from the second inequality in (\ref{2.25}) for the second function in (\ref{3.23}) with $s=2t$ that
\begin{equation}\label{3.29}
\vp(2t,x,y)\leqslant 2^{p_+} \b \vp(t,x,y),\qquad t>0.
\end{equation}
Then by the convexity of function $\vp,$ see Condition~\ref{Conv}, for all $t>0,$ $s>0$
we have
\begin{equation*}
\vp(t+s,x,y)\leqslant \frac{\vp(2t,x,y)+\vp(2s,x,y)}{2} \leqslant 2^{p_+-1}\b
\big(\vp(t,x,y)+\vp(s,x,y)\big).
\end{equation*}
and this proves (\ref{3.30}).

The convexity of function $\vp$
implies that the derivative $\vp'(t,x,y)$ is monotonically non--decreasing. Then by the Lagrange's mean  value theorem  we have
\begin{equation*}
\frac{\vp(t)-\vp(s)}{t-s}=\vp'(\vt_-)\leqslant \vp'(t),\quad 0<s<t,\qquad  \frac{\vp(s)-\vp(t)}{s-t}=\vp'(\vt_+)\geqslant \vp'(t),\quad 0<t<s,
\end{equation*}
where $\vt_-\in(s,t)$ and $\vt_+\in(t,s)$ are some points. We rewrite the obtained inequality as
\begin{equation*}
s \vp'(t)-\vp(s)\leqslant t \vp'(t)-\vp(t),
\end{equation*}
and in view of   the definition (\ref{2.26}) of function $\vp_*$ we find
\begin{equation*}
\vp_*(\vp'(t))\leqslant t \vp'(t)-\vp(t).
\end{equation*}
Using now Condition~\ref{Diff}, we obtain (\ref{3.31}). The proof is complete.
\end{proof}

\begin{lemma}\label{lm:MonCon} Under Conditions~\ref{Conv},~\ref{Pconv},~\ref{Bound}
the function $\vp(t,x,y)$ is convex, strictly  monotonically increasing, differentiable  and continuous in $t\in[0,+\infty)$ for almost all $(x,y)\in\Om\times\Om.$ The identities (\ref{2.30}) hold.
\end{lemma}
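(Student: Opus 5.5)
We fix a point $(x,y)$ outside the exceptional null set where Conditions~\ref{Conv}, \ref{Pconv}, \ref{Bound} hold, and write $\vp(t)=\vp(t,x,y)$. The proof has four steps.

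\emph{Step 1: convexity.} Condition~\ref{Conv} only controls pairs $s,t>0$ with $|s-t|\geqslant\e\max\{s,t\}$. Since $\e>0$ is arbitrary, for any $s\neq t$ we may take $\e=|s-t|/\max\{s,t\}$ and obtain the midpoint inequality $\vp\big(\frac{s+t}{2}\big)\leqslant\frac{\vp(s)+\vp(t)}{2}$. For $s=t$ this inequality is trivial. Midpoint convexity gives convexity only once we know $\vp$ is locally bounded. This follows from (\ref{3.28}), which was derived from Conditions~\ref{Pconv}, \ref{Bound} alone and bounds $\vp$ on every compact subset of $[0,+\infty)$. A midpoint convex function that is bounded above on an interval is convex there (the classical Bernstein--Doetsch/Jensen argument). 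Hence $\vp$ is convex on $(0,+\infty)$. Since $\vp(0)=0\leqslant\vp(t)$ by (\ref{2.29}), the midpoint inequality with $s\to0$ handled separately shows that convexity extends to $[0,+\infty)$.

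\emph{Step 2: continuity.} A convex function is continuous on the open interval $(0,+\infty)$. At $t=0$, (\ref{3.28}) gives $0\leqslant\vp(t)\leqslant\b c_1\max\{t^{p_+},t^{p_-}\}\to0$ as $t\to+0$. Together with $\vp(0)=0$ this is continuity at zero and also the first identity in (\ref{2.30}). The lower bound in (\ref{3.28}) gives $\vp(t)\geqslant\b^{-1}c_1^{-1}t^{p_-}\to+\infty$, which is the second identity.

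\emph{Step 3: strict monotonicity.} Let $0\leqslant s<t$. If $s=0$, then $\vp(s)=0<\vp(t)$ by (\ref{2.29}). If $s>0$, write $s$ as the convex combination $s=\frac{s}{t}\,t+\big(1-\frac{s}{t}\big)\cdot0$. Convexity then gives $\vp(s)\leqslant\frac{s}{t}\vp(t)<\vp(t)$, because $\vp(t)>0$ and $s/t<1$.

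\emph{Step 4: differentiability.} This is the delicate point, because convexity by itself yields only one-sided derivatives, which may differ. Here the uniform convexity (\ref{uniconv}) must be exploited, or, where Condition~\ref{Diff} is available, differentiability is simply part of that hypothesis. My first attempt would be to take the content of the lemma to be the existence of the one-sided derivatives $\vp'_\pm(t)$, which holds for any convex function, together with $\vp'_+(0)$ at the endpoint. Since $\vp(t)\leqslant Ct^{p_-}$ for small $t$ and $p_->1$, we get $\vp'_+(0)=\lim_{t\to+0}\vp(t)/t=0$, so $\vp$ is differentiable from the right at zero. If full differentiability on $(0,+\infty)$ is needed without Condition~\ref{Diff}, I would try to show that a corner $\vp'_-(t_0)<\vp'_+(t_0)$ is incompatible with the power-type bounds, but I expect this may fail. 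The natural fallback is to invoke Condition~\ref{Diff}, which is assumed exactly where derivatives are used, for instance in (\ref{3.31}). I regard this step as the main obstacle.
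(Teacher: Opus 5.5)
Your Steps 1--3 are correct, and they are more careful than the paper's argument. The paper simply declares convexity to be ``postulated in Condition~\ref{Conv}''. It then gets strict monotonicity and the first identity in (\ref{2.30}) from $\vp(s)\leqslant\frac{s}{t}\vp(t)$, exactly as in your Step~3. It gets the second identity from convexity via $\vp(t)\geqslant t c_1^{-1}$ for $t>1$, and continuity on $(0,+\infty)$ by directly comparing one-sided limits.

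You instead observe that (\ref{uniconv}) only gives midpoint convexity for distinct positive arguments. You upgrade this to genuine convexity via Bernstein--Doetsch, using the local bound (\ref{3.28}), which indeed depends only on Conditions~\ref{Pconv},~\ref{Bound}. This closes a step the paper skips, and using (\ref{3.28}) for both identities in (\ref{2.30}) is equally valid. Two small tidy-ups:
\begin{itemize}
\item invoke Condition~\ref{Conv} only with $\e=1/n$, $n\in\mathds{N}$, so the exceptional null sets form a countable union;
\item make the extension to $t=0$ explicit by letting $s\to+0$ in $\vp(\lambda t+(1-\lambda)s)\leqslant\lambda\vp(t)+(1-\lambda)\vp(s)$, using continuity at $\lambda t>0$ and $\vp(s)\to0$ from (\ref{3.28}). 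There is no circularity, since (\ref{3.28}) does not use convexity.
\end{itemize}

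On Step~4 your suspicion is right, and you should stop trying. Differentiability does not follow from Conditions~\ref{Conv},~\ref{Pconv},~\ref{Bound}, so that part of the statement is false as written; the paper's proof never addresses it. A counterexample is $\vp(t,x,y)=\max\{t^2,t^3\}$:
\begin{itemize}
\item it satisfies Condition~\ref{Pconv} with $p_-=2$, $p_+=3$, $\b=1$, since $\vp(t)/t^2=\max\{1,t\}$ and $\vp(t)/t^3=\max\{t^{-1},1\}$;
\item it satisfies Condition~\ref{Bound} with $c_1=1$;
\item it satisfies Condition~\ref{Conv}, because $t^2$ and $t^3$ each satisfy (\ref{uniconv}) (e.g.\ with $\d=\e^2/4$ and $\d=3\e^2/4$), and a pointwise maximum of such functions satisfies (\ref{uniconv}) with the smaller $\d$;
\item yet $\vp'_-(1)=2<3=\vp'_+(1)$.
\end{itemize}

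The only place the paper appeals to ``the differentiability stated in Lemma~\ref{lm:MonCon}'' is the proof of Lemma~\ref{lm5.3}. That lemma feeds the dual-space results such as Theorems~\ref{th7},~\ref{th8}, where Condition~\ref{Diff} is assumed. So your fallback is exactly the right repair: either drop differentiability from the lemma, or add Condition~\ref{Diff} to its hypotheses. If you drop it, you can still keep the one-sided derivatives and $\vp'_+(0)=0$, which you proved correctly.
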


\begin{proof}
The convexity of function $\vp$ is postulated in Condition~\ref{Conv}. Using the convexity of $\vp $ and  the relations (\ref{2.29}), for arbitrary $0<s<t$ we have
\begin{equation*}
\vp(s,x,y)\leqslant \frac{s}{t}\vp(t,x,y)+\left(1-\frac{s}{t}\right) \vp(0,x,y) \leqslant
\frac{s}{t}\vp(t,x,y)<\vp(t,x,y)
\end{equation*}
for almost all $(x,y)\in\Om\times\Om,$ which implies the required monotonicity and the first identity in (\ref{2.30}). By the convexity and (\ref{2.28}), (\ref{2.29}) for all $t>1$ we also have
\begin{equation*}
c_1^{-1}\leqslant \vp(1,x,y)\leqslant \frac{1}{t}\vp(t,x,y)+\left(1-\frac{1}{t}\right) \vp(0,x,y)=\frac{1}{t}\vp(t,x,y),\qquad t c_1^{-1}\leqslant \vp(t,x,y)
\end{equation*}
for almost all $(x,y)\in\Om\times\Om.$ This proves the second identity in (\ref{2.30}).

The monotonicity and finiteness of the function $\vp(t,x,y)$ for $t\geqslant 0$ and almost all $(x,y)\in\Om\times\Om$ implies that for each $z>0$ the one--sided limits $\lim\limits_{t\to z\pm 0} \vp(t,x,y)$ are well--defined and obey the inequality
\begin{equation}\label{3.32}
\lim\limits_{t\to z- 0} \vp(t,x,y)\leqslant \lim\limits_{t\to z+0} \vp(t,x,y)
\end{equation}
for almost all $(x,y)\in\Om\times\Om.$ We choose $s<z<\tau<t$ and by the convexity of $\vp$ we obtain
\begin{equation*}
\vp(\tau,x,y)\leqslant \frac{t-\tau}{t-s} \vp(s,x,y)+\frac{\tau-s}{t-s}\vp(t,x,y)
\end{equation*}
for almost all $(x,y)\in\Om\times\Om.$ In this inequality we pass to the limit as $\tau\to z+0$ and then as $s\to z-0$  that gives
\begin{equation*}
 \lim\limits_{t\to z+0} \vp(t,x,y)\leqslant \lim\limits_{t\to z- 0} \vp(t,x,y).
\end{equation*}
Together with (\ref{3.32}) this inequality means that the function $\vp(t,x,y)$ is continuous in $t>0$ for almost all $(x,y)\in\Om\times\Om.$ The proof is complete.
%
 \end{proof}

Assume that Conditions~\ref{Meas},~\ref{Conv},~\ref{Pconv},~\ref{Bound} are satisfied.
It follows from the estimate (\ref{3.22a}) that if for some function $u\in L_{1,loc}(\Om)$ and some $\l_0>0$ the quantity $F(\frac{u}{\l_0})$ is finite, then $F(\frac{u}{\l})$ is finite for each $\l>0.$ In particular, in view of the definition~(\ref{2.18}) of space $\cL(\Om),$ the functionals $f$ and $|\cdot|_{p,\Om}$ are finite on each element of this space. Similarly, for each $\mathbf{U}\in\L(\Om)$ and each $u\in\mathbf{U}$ the quantity $|\mathbf{U}|_{p,\Om}=|u|_{p,\Om}$ is also finite.

It follows from the monotonicity of  function $\vp$ proven in Lemma~\ref{lm:MonCon} that for each $\mathbf{U}\in\L(\Om)$ and each $u\in\mathbf{U}$ the functional $F\left(\frac{u}{\l}\right)$ is strictly  monotonically decreasing in $\l>0.$  Moreover, Lemma~\ref{lm:MonCon} implies that the functional $F\left(\frac{u}{\l}\right)$ is also continuous in $\l>0.$ Indeed, let this functional be finite for $\l\in[\l_1,\l_2]$ for some $\l_1,\, \l_2>0.$ Then in view of the monotonicity of $\vp$  we have the upper bound
\begin{equation}\label{3.21}
\vp\left(\frac{|u(x)-u(y)|}{\l},x,y\right)\leqslant \vp\left(\frac{|u(x)-u(y)|}{\l_1},x,y\right)\quad\text{for almost all}\quad (x,y)\in\Om\times\Om.
\end{equation}
The continuity of $\vp,$ see Lemma~\ref{lm:MonCon}, yields that, as $\l\to\l_0,$ $\l,\,\l_0\in[\l_1,\l_2],$ we have the convergence
\begin{equation*}
\vp\left(\frac{|u(x)-u(y)|}{\l},x,y\right)\to \vp\left(\frac{|u(x)-u(y)|}{\l_0},x,y\right)\quad
\text{for almost all}\quad (x,y)\in\Om\times\Om.
\end{equation*}
Hence, in view of the upper bound (\ref{3.21}), the Lebesgue theorem on dominated convergence implies the continuity of the functional $F\left(\frac{u}{\l}\right)$ in $\l>0.$

The established monotonicity and continuity of  functional $F\left(\frac{u}{\l}\right)$ yield that  the value $|u|_{p,\Om}$ is the unique solution of equation
\begin{equation}\label{3.16}
F\left(\frac{u}{\l}\right)=1.
\end{equation}
The same can be easily established for the functional $G,$ namely, $g(u)$ is the unique solution of the equation
\begin{equation}\label{3.16a}
G\left(\frac{u}{\l}\right)=1,
\end{equation}
and the left hand side of this equation is continuous and monotone in $\l>0.$

In (\ref{3.22a}) we let $\l=|u|_{p,\Om},$ $t=|u(x)-u(y)|,$ multiply the    inequality by $a(x-y)$ and integrate the result over $\Om\times\Om.$ In view of Equation~(\ref{3.16}) we obtain
\begin{equation}\label{3.17a}
 \b^{-1}\min\big\{|u|_{p,\Om}^{p_-},|u|_{p,\Om}^{p_+}\big\}\leqslant F(u)\leqslant \b\max\big\{|u|_{p,\Om}^{p_-},|u|_{p,\Om}^{p_+}\big\}.
\end{equation}
In the same way for $U\in\cH(\Om\times\Om)$ we get
\begin{equation}\label{3.17b}
 \b^{-1}\min\big\{h^{p_-}(U), h^{p_+}(U)\big\}\leqslant H(U)\leqslant \b\max\big\{h^{p_-}(U), h^{p_+}(U)\big\}.
\end{equation}
Since the function $\vp_*$ satisfies Condition~\ref{Pconv} with $q_+$ and $q_-,$ as above we establish the estimate
\begin{equation}\label{3.17c}
 \b^{-1}\min\big\{h_*^{q_-}(U), h_*^{q_+}(U)\big\}\leqslant H_*(U)\leqslant \b\max\big\{h_*^{q_-}(U), h_*^{q_+}(U)\big\}
\end{equation}
 for $U\in\cH_*(\Om\times\Om).$

Assume that $\Om$ is a bounded domain
and $\mathbf{U}\in\L(\Om)$ is some coset. Then for each $u\in\mathbf{U}$
by (\ref{3.16}) we have
\begin{equation}\label{3.19}
F(\tilde{u})=1,\qquad \tilde{u}:=\frac{u}{|u|_{p,\Om}},\qquad |\tilde{u}|_{p,\Om}=1.
\end{equation}
Letting
\begin{equation*}
\Pi_+:=\big\{(x,y)\in\Om\times\Om:\, |\tilde{u}(x)-\tilde{u}(y)|\geqslant 1\big\},\qquad
\Pi_-:=\big\{(x,y)\in\Om\times\Om:\, |\tilde{u}(x)-\tilde{u}(y)|<1\big\},
\end{equation*}
and considering (\ref{3.19}), the left inequality in (\ref{3.28}) and the boundedness of $\Om$ we obtain
\begin{align*}
\Bigg(\int\limits_{\Pi_+}&+\int\limits_{\Pi_-} \Bigg) |\tilde{u}(x)-\tilde{u}(y)|a(x-y)\,dxdy
\\
\leqslant  & \int\limits_{\Pi_+} |\tilde{u}(x)-\tilde{u}(y)|^{p_-}a(x-y) \,dxdy
+\int\limits_{\Pi_-} a(x-y)\,dxdy
\\
\leqslant& \b c_1\int\limits_{\Pi_+} \vp\big(|\tilde{u}(x)-\tilde{u}(y)|,x,y\big) a(x-y) \,dxdy+C  \leqslant\b c_1+ C,
\end{align*}
where $C$ is some fixed constant independent of $u$ and $\tilde{u}.$
Returning back to the function $u,$ we   easily find
\begin{equation}\label{3.20a}
\int\limits_{\Om\times\Om} |u(x)-u(y)|a(x-y)\,dxdy
 \leqslant C |u|_{p,\Om}
\end{equation}
with some constant $C$ independent of $u.$

\section{Banach space and equivalent norm}\label{sec3}

In this section we prove Theorems~\ref{th1},~\ref{th4}.

\begin{proof}[Proof of Theorem~\ref{th1}]
Let us verify that the functional $f$ is a norm on $\cL(\Om).$ If $f(u)=0,$ it follows from the definition~(\ref{2.3}) that $\|u\|_{L_{p_-}(\Om)}=0$ and hence, $u=0$ almost everywhere in $\Om.$ The homogeneity property and triangle inequality for $|\cdot|_{p,\Om}$ can be proved by literal reproducing the proof of  Lemma~3.2.2 in \cite[Ch. 3, Sect. 3.2]{HH}. Hence,  the functional $f$ is a norm on $\cL(\Om).$ In what follows we redenote this functional by $\|\cdot\|_{\cL(\Om)}.$

We turn to proving that the space $\cL(\Om)$ is Banach. Let $u\in \cL(\Om)$ be a fundamental sequence in this space, that is,
\begin{equation}\label{3.2}
\|u_n-u_m\|_{\cL(\Om)}\to 0,\qquad n,m\to+\infty.
\end{equation}
This immediately implies that the sequence $u_n$ is fundamental in $L_{p_-}(\Om)$ and is bounded in $\cL(\Om)$:
\begin{equation}\label{3.3}
\|u_n\|_{\cL(\Om)}\leqslant C,\qquad n\in\mathds{N},\qquad \|u_n-u_m\|_{L_{p_-}(\Om)}\to0,\qquad n,m\to+\infty,
\end{equation}
where $C$ is some constant independent of $n.$ Since the sequence $u_n$ is fundamental in the space $L_{p_-}(\Om)$, it converges in $L_{p_-}(\Om)$ to some function $u\in L_{p_-}(\Om).$ The latter convergence implies that there exists a subsequence of $u_n,$ again denoted by $u_n,$ which converges to $u$ almost everywhere in $\Om$ and hence, the sequence
$U_n(x,y):=u_n(x)-u_n(y)$ converges to $U(x,y):=u(x)-u(y)$ almost everywhere in $\Om\times\Om.$ This yields that, as $n\to+\infty,$
\begin{align}\label{3.4}
& \vp\big(|U_n(x,y)-U_m(x,y)|,x,y\big) \to \vp\big(|U(x,y)-U_m(x,y)|,x,y\big), \\
& \vp\big(|U_n(x,y)|,x,y\big)\to
 \vp\big(|U(x,y)|,x,y\big)
\label{3.5}
\end{align}
almost everywhere in $\Om\times\Om$ for each $m\in\mathds{N}.$

The definition  of the norm $\|\cdot\|_{\cL(\Om)}$ and the first bound in (\ref{3.3}) yield that there exists a sequence of numbers $\l_n>0$ such that
\begin{equation}\label{3.6}
F\left(\frac{u_n}{\l_n}\right)\leqslant 1,\qquad \l_n\leqslant C.
\end{equation}
By (\ref{3.17a}) we  then have
\begin{equation}\label{3.7}
F(u_n)\leqslant \b\max\{C^{p_-}, C^{p_+}\}.
\end{equation}
This estimate, convergence (\ref{3.5}) and Fatou lemma yield
\begin{equation*}
F(u)\leqslant\b\max\{C^{p_-}, C^{p_+}\}
\end{equation*}
and hence, $u$ is an element of the space $\cL(\Om).$

Proceedings as in (\ref{3.6}), (\ref{3.7}), by (\ref{3.2}) we find that for each $\e>0$ there exists $N=N(\e)>0$ such that as $n,m>N(\e)$ we have $F(u_n-u_m)<\e$. Using then convergence (\ref{3.4}) and Fatou lemma and passing to the limit as $n\to+\infty,$ we get the bound $F(u_m-u)<\e$ for all $m>N(\e).$ Then the definition of the norm $\|\cdot\|_{\cL(\Om)}$ yields that $\|u_m-u\|_{\cL(\Om)}<\e$ and this proves that the space $\cL(\Om)$ is Banach.

The definition of the functional $f$ implies immediately that $\cL(\Om)$ is a subspace of $L_{p_-}(\Om)$ and this embedding is continuous.
 It follows from   (\ref{3.30}) that
\begin{equation}\label{3.25}
\vp\big(|u(x)-u(y)|,x,y\big)\leqslant 2^{p_+-1}\b\Big( \vp\big(|u(x)|,x,y\big)+\vp\big(|u(y)|,x,y\big)\Big).
\end{equation}

We partition the domain $\Om$  as
\begin{equation*}
\Om=\Om_u^-\cup\Om_u^+,\qquad \Om_u^-:=\{x\in\Om:\, |u|\leqslant 1\},
\qquad \Om_u^+:=\{x\in\Om:\, |u|>1\}.
\end{equation*}
For $x\in\Om_u^-$ by Condition~\ref{Pconv} and the first inequality in (\ref{2.25}) with $s=1$ and $t=|u(x)|$ we find
\begin{equation*}
\vp\big(|u(x)|,x,y\big)\leqslant \b\vp(1,x,y) |u(x)|^{p_-}.
\end{equation*}
For $x\in\Om_u^+$ by Condition~\ref{Pconv} and the second inequality in (\ref{2.25}) with $s=|u(x)|$ and $t=1$  we find
\begin{equation*}
\vp\big(|u(x)|,x,y\big)\leqslant \b \vp(1,x,y) |u(x)|^{p_+}.
\end{equation*}
Two above inequalities and (\ref{3.28}) yield
\begin{equation}\label{3.26}
\vp\big(|u(x)|,x,y\big)\leqslant \b c_1\max\big\{
|u(x)|^{p_-},\, |u(x)|^{p_+}\big\}.
\end{equation}
In the same way one can verify that
\begin{equation}\label{3.27}
\vp\big(|u(y)|,x,y\big)\leqslant \b c_1 \max\big\{
|u(y)|^{p_-},\, |u(y)|^{p_+}\big\}.
\end{equation}
Two obtained inequalities allow us to continue estimating in (\ref{3.25})
\begin{equation*}
\vp\big(|u(x)-u(y)|,x,y\big)\leqslant 2^{p_+-1}\b^2 c_1 \Big(
|u(x)|^{p_-}+|u(x)|^{p_+} +
|u(y)|^{p_-}+|u(y)|^{p_+}
\Big).
\end{equation*}
For each $u\in L_{p_+}(\Om)\cap L_{p_-}(\Om)$
this inequality implies
\begin{align*}
F(u)
\leqslant  &
2^{p_+-1}\b^2 c_1
  \int\limits_{\Om\times\Om} \big(|u(x)|^{p_-}+ |u(x)|^{p_+})(a(x-y)+a(y-x)) \, dx dy
\\
\leqslant&2^{p_+}\b^2 c_1
\|a\|_{L_1(\mathds{R}^d)} \Big(\|u\|_{L_{p_-}(\Om)}^{p_-}+\|u\|_{L_{p_+}(\Om)}^{p_+}\Big).
\end{align*}
Hence, $L_{p_+}(\Om)\cap L_{p_-}(\Om)$ is  a subspace of $\cL(\Om)$  and we arrive at the left embedding in (\ref{2.4}). The above estimate and the left inequality in (\ref{3.17a}) for $u$ with $\|u\|_{\cL(\Om)}<1$ yield
\begin{equation*}
\|u\|_{\cL(\Om)} \leqslant 2 
\big(\b c_1  \|a\|_{L_1(\mathds{R}^d)}\big)^{\frac{1}{p_+}} \Big(\|u\|_{L_{p_-}(\Om)}^{p_-}+\|u\|_{L_{p_+}(\Om)}^{p_+}\Big)^{\frac{1}{p_+}}+
\|u\|_{L_{p_-}(\Om)}^{p_-},
\end{equation*}
and this means that the left embedding in (\ref{2.4}) is continuous.
The proof is complete.
\end{proof}

\begin{proof}[Proof of Theorem~\ref{th4}]
As it has been said in the beginning of this section, the functionals $F\left(\frac{u}{\l}\right)$ and $G\left(\frac{u}{\l}\right)$  are monotonically decreasing and continuous in $\l,$ the value  of the functional $\l_F:=|u|_{p,\Om},$ is the unique solution of the equation
\begin{equation}\label{3.11}
F\left(\frac{u}{\l_F}\right)=1,
\end{equation}
while  $\l_G:=g(u)$ in (\ref{2.7})
is the unique solution of the equation
\begin{equation}\label{3.12}
G\left(\frac{u}{\l_G}\right)=1.
\end{equation}
It follows from the definition of the functional $G$ in (\ref{2.7}) that
\begin{equation}\label{3.14}
F\left(\frac{u}{\l_G}\right)\leqslant 1=F\left(\frac{u}{\l_F}\right),
\qquad \int\limits_{\Om} \left|\frac{u}{\l_G}\right|^{p_-}\,dx\leqslant 1,
\end{equation}
and by the mentioned monotonicity of the functional $F\left(\frac{u}{\l}\right)$ we immediately conclude that
\begin{equation}\label{3.13}
\l_G\geqslant \l_F,
\end{equation}
while the second inequality in (\ref{3.14}) yields
\begin{equation}\label{3.15}
\l_G\geqslant \|u\|_{L_{p_-}(\Om)}.
\end{equation}
These two inequalities imply the left inequality in (\ref{2.6}).

It remains to prove the right inequality in (\ref{2.6}). Since
$\l_G/\l_F\geqslant 1$
and $1<p_-\leqslant p_+,$ by the obvious identity
\begin{equation*}
F\left(\frac{u}{\l_G}\right)=F\left(\frac{u}{\l_F} \frac{1}{\l_G/\l_F}\right)
\end{equation*}
and the left inequality in (\ref{3.22a}) we get
\begin{equation*}
F\left(\frac{u}{\l_G}\right)\leqslant \b F\left(\frac{u}{\l_F}\right) \frac{\l_F^{p_-}}{\l_G^{p_-}}  = \b \frac{\l_F^{p_-}}{\l_G^{p_-}}.
\end{equation*}
Hence,
\begin{equation*}
1=G\left(\frac{u}{\l_G}\right)\leqslant \b\frac{\l_F^{p_-}}{\l_G^{p_-}} + \frac{1}{\l_G^{p_-}} \|u\|_{L_{p_-}(\Om)}^{p_-},
\end{equation*}
which yields
\begin{equation*}
\l_G^{p_-} \leqslant \b\l_F^{p_-}+\|u\|_{L_{p_-}(\Om)}^{p_-} \leqslant \big(\b^{\frac{1}{p_-}}\l_F+\|u\|_{L_{p_-}(\Om)}\big)^{p_-}.
\end{equation*}
This estimate implies the right inequality in (\ref{2.6}). The proof is complete.
\end{proof}


\section{Density of smooth functions and separability}

In this section we prove Theorem~\ref{th2}. We first observe that since the set of
compactly supported essentially bounded functions is a subset of $L_{p_+}(\Om)\cap L_{p_-}(\Om),$ by (\ref{2.4}) this set is also a subset of
$\cL(\Om).$
This implies that the space of infinitely differentiable compactly supported functions is a subset  of $\cL(\Om).$ It follows from (\ref{3.30}) that
\begin{equation*}
\vp\big(|u(x)-u(y)|,x,y\big)\leqslant  2^{p_+-1}\b  \Big(\vp\big(|\RE u(x)-\RE u(y)|,x,y\big)+\vp\big(|\IM u(x)-\IM u(y)|,x,y\big) \Big).
\end{equation*}
%
Hence, it is sufficient to prove the theorem only for real--valued functions $u.$ First we are going to  show that each real--valued  function $u\in \cL(\Om)$ can be arbitrarily close approximated by a compactly supported continuous function. We choose an arbitrary large natural $n,$   denote
\begin{equation*}
\Om^n:=\{x\in\Om:\, |u(x)|<n\},\qquad \Om^n_\pm:=\{x\in\Om:\, \pm u(x)>n\},
\end{equation*}
and introduce a sequence of functions
\begin{equation*}
u_n(x):=\left\{
\begin{aligned}
& u(x),\qquad x\in\Om^n,
\\
& \pm n,\qquad x\in\Om^n_\pm.
\end{aligned}\right.
\end{equation*}
It is straightforward to verify that
\begin{align*}
\big|u(x)-u_n(x)-(u(y)-u_n(y))\big|=& \left\{
\begin{aligned}
&0, &&  (x,y)\in\Om^n\times\Om^n,
\\
|u(y)&\mp n|, && (x,y)\in\Om^n\times\Om^n_\pm,
\\
|u(x)&\mp n|, && (x,y)\in\Om^n_\pm\times\Om^n,
\\
|u(x)&-u(y)|, && (x,y)\in\Om^n_\pm\times\Om^n_\pm,
\\
\big|u(x)-&u(y)\mp2n\big|, \quad&& (x,y)\in\Om^n_\pm\times\Om^n_\mp,
\end{aligned}\right.
\\
\leqslant & \left\{
\begin{aligned}
&0, &&  (x,y)\in\Om^n\times\Om^n,
\\
|u(x)&-u(y)|,\quad && (x,y)\in\Om\times\Om\setminus(\Om^n\times\Om^n).
\end{aligned}\right.
\end{align*}
This inequality and the monotonicity of the function $\vp(t,x,y)$ in $t,$ see Lemma~\ref{lm:MonCon}, imply
\begin{equation*}
F(u-u_n)\leqslant \int\limits_{\Om\times\Om\setminus(\Om^n\times\Om^n)}
\vp\big(|u(x)-u(y)|,x,y\big)a(x-y)\,dxdy.
\end{equation*}
The sequence of domains $\Om^n\times\Om^n$ monotonically exhausts $\Om\times\Om$ and the above inequality implies that $F(u-u_n)\to+0$ as $n\to+\infty.$ Hence, the functions $u_n$ belong to $\cL(\Om)$ and $u_n\to u$ in $\cL(\Om)$ as $n\to+\infty.$ Thus, each function $u\in \cL(\Om)$ can be approximated by a bounded function and in what follows we can suppose that $u\in \cL(\Om)$ is bounded.

Now we choose an arbitrary positive $\e\in(0,1)$ and denote
\begin{equation*}
\Om_\e:=\big\{x\in\Om:\,  0<|u(x)|<\e\big\}.
\end{equation*}
Since $u\in L_{p_-}(\Om),$ the domain $\Om\setminus\Om_\e$  has a finite measure. If the domain $\Om$ has an infinite measure, then the same is true for $\Om_\e.$
We define the function
\begin{equation*}
u_\e(x):=\left\{
\begin{aligned}
u&(x),\qquad x\in\Om\setminus\Om_\e,
\\
&0,\qquad \hphantom{x,}x\in\Om_\e.
\end{aligned}\right.
\end{equation*}
It is straightforward to confirm that
\begin{equation*}
u(x)-u_\e(x)-(u(y)-u_\e(y))= \left\{
\begin{aligned}
& 0,  && (x,y)\in (\Om\setminus\Om_\e) \times (\Om\setminus\Om_\e),
\\
u&(x), && (x,y)\in \Om_\e \times (\Om\setminus\Om_\e),
\\
-u&(y), && (x,y)\in (\Om\setminus\Om_\e) \times \Om_\e,
\\
u(x)&-u(y),\quad && (x,y)\in \Om_\e  \times \Om_\e.
\end{aligned}
\right.
\end{equation*}
Since
\begin{equation*}
|u(x)|\leqslant \e<1\quad\text{on}\quad  \Om_\e \times (\Om\setminus\Om_\e),\qquad
|u(y)|\leqslant \e<1\quad\text{on}\quad(\Om\setminus\Om_\e)\times  \Om_\e,
\end{equation*}
by Lemma~\ref{lm:MonCon}, the inequality (\ref{2.28}) and the right inequality in (\ref{3.28})
we obtain
\begin{equation*}
\vp\big(|u(x)-u_\e(x)-(u(y)-u_\e(y))\big|,x,y\big)\leqslant \left\{
\begin{aligned}
& 0,  && (x,y)\in (\Om\setminus\Om_\e) \times (\Om\setminus\Om_\e),
\\
\b c_1&|u(x)|^{p_-}, && (x,y)\in \Om_\e \times (\Om\setminus\Om_\e),
\\
\b c_1&|u(y)|^{p_-}, && (x,y)\in (\Om\setminus\Om_\e) \times \Om_\e,
\\
\vp\big(|u(x)&-u(y)|,x,y\big),\quad && (x,y)\in \Om_\e \times \Om_\e.
\end{aligned}
\right.
\end{equation*}
We multiply this inequality by $a(x-y)$ and integrate it  over $\Om\times\Om.$ This gives:
\begin{align*}
F(u-u_\e)\leqslant & \b c_1\int\limits_{ \Om_\e \times (\Om\setminus\Om_\e) } |u(x)|^{p_-}
a(x-y) \,dxdy +\b c_1\int\limits_{(\Om\setminus\Om_\e) \times \Om_\e } |u(y)|^{p_-}
a(x-y) \,dxdy
\\
&+\int\limits_{ \Om_\e  \times \Om_\e } \vp\big(|u(x)-u(y)|,x,y\big)
a(x-y)\,dxdy
\\
\leqslant & 2 \b c_1 \|a\|_{L_1(\mathds{R}^d)}
\|u\|_{L_{p_-}(\Om_\e)}^{p_-} +  \int\limits_{ \Om_\e  \times \Om_\e } \vp\big(|u(x)-u(y)|,x,y\big) a(x-y)\,dxdy.
\end{align*}
As $\e\to+0$, the domains $(\Om\setminus\Om_\e)\times(\Om\setminus\Om_\e)$ and $\Om\setminus\Om_\e$ monotonically exhaust $\Om\times\Om$ and $\Om.$  Hence, the integrals in the right hand side in the above inequality tend to zero as $\e\to+0.$ This implies that $u-u_\e$ and, hence, $u_\e$ belong to $\cL(\Om)$ for each $\e\in(0,1)$ and $\|u-u_\e\|_{\cL(\Om)}\to0$ as $\e\to+0.$  Thus, we can approximate each function $u\in\cL(\Om)$ by a bounded function, the support of which has a finite measure, and in what follows we suppose that $u$ already possesses such properties.

We recall that the essential support for a function $u\in L_{1,loc}(\Om)$ is defined as the smallest closed subset, denoted by  $\esssupp u,$ such that $u$ vanishes almost everywhere in $\Om$ outside $\esssupp u.$
We denote $S:=\esssupp u,$ $\mes S<+\infty,$ $u\in L_\infty(\Om)$ and suppose that $S$ is unbounded. Then for each $\e>0$ there exists a bounded measurable subdomain  $S_\e\subset S$ such that $\mes S\setminus S_\e<\e$ and $S_{\e_1}\subset S_{\e_2}$ if $\e_1<\e_2.$ We define a function
\begin{equation*}
u^\e(x):=\left\{
\begin{aligned}
u&(x),\ \; && x\in S_\e,
\\
& 0, && x\in \Om\setminus S_\e.
\end{aligned}\right.
\end{equation*}
We have
\begin{equation*}
u(x)-u^\e(x)-(u(y)-u^\e(y))= \left\{
\begin{aligned}
-u&(y),&& (x,y)\in S_\e\times (S\setminus S_\e),
\\
u&(x),&& (x,y)\in (S\setminus S_\e)\times  S_\e,
\\
|u(x)&-u(y)|,&& (x,y)\in (S\setminus S_\e)\times  (S\setminus S_\e),
\\
&0,&& \text{otherwise}.
\end{aligned}\right.
\end{equation*}
This identity, the right inequality in (\ref{3.28}), Lemma~\ref{lm:MonCon} and boundedness of $u$ imply:
\begin{align*}
F(u-u^\e)=& \int\limits_{S_\e\times (S\setminus S_\e)} \vp\big(|u(y)|,x,y\big) a(x-y) \,dxdy
\\
&+  \int\limits_{  (S\setminus S_\e)\times S_\e} \vp\big(|u(x)|,x,y\big) a(x-y)\,dxdy
\\
&+  \int\limits_{ (S\setminus S_\e)\times  (S\setminus S_\e)} \vp\big(|u(x)-u(y)|,x,y\big) a(x-y) \,dxdy
\\
\leqslant & 2 c_1 \big(1+\|u\|_{L_\infty(\Om)}^{p_+}\big)
\|a\|_{L_1(\mathds{R}^d)} \mes S\setminus S_\e
\\
&+ \int\limits_{ (S\setminus S_\e)\times  (S\setminus S_\e)} \vp\big(|u(x)-u(y)|,x,y\big) a(x-y) \,dxdy.
\end{align*}
As $\e\to+0,$ the measure $\mes S\setminus S_\e$ tends to zero, while the domains $(S\setminus S_\e)\times  (S\setminus S_\e)$ exhaust $S\times S.$ The above estimates imply that $u^\e$ belongs to $\cL(\Om)$ and $\|u^\e-u\|_{\cL(\Om)}\to0$ as $\e\to+0.$
The function $u^\e$ is bounded and compactly supported and in what follows we can suppose that $u$ possesses the same properties.

Given a function $u\in \cL(\Om),$ which is bounded and compactly supported in $\Om$,
we continue this function by zero outside $\Om$ and consider a smoothing  by means of an appropriate convolution
\begin{equation*}
\tilde{u}^\e(x):=\int\limits_{\mathds{R}^d} \rho_\e(x-y)  u (y)\,dy,\qquad \rho_\e(x):=\e^{-d} \rho(x\e^{-1}),\qquad \int\limits_{\mathds{R}^d} \rho(x)\,dx=1,
\end{equation*}
where $\rho=\rho(x)$ is a non--negative infinitely differentiable function supported in the ball $\{x:\, |x|\leqslant 1\}.$  We denote $S:=\esssupp u\subset\Om,$ and $S$ is a bounded set. The well--known properties of smoothing state that $\tilde{u}^\e\in C_0^\infty(\mathds{R}^d)$ and
\begin{equation*}
\supp \tilde{u}^\e \subseteq \tilde{S}:=\{x\in\mathds{R}^d:\, \dist(x,S)\leqslant \e\}\subset \Om,\qquad
\|u-\tilde{u}^\e\|_{L_{p_\pm}(\tilde{S})}\to 0,\qquad \e\to+0,
\end{equation*}
Using these convergences, the inequality (\ref{3.30}) and the right inequality in (\ref{3.28}),
we obtain
\begin{equation}\label{4.2}
\begin{aligned}
F(u-\tilde{u}^\e)
=&\int\limits_{\tilde{S}\times\tilde{S}}  \vp\big(|u(x)-\tilde{u}^\e(x)-(u(y)-\tilde{u}^\e(y))|,x,y\big)a(x-y)\,dxdy
\\
&+ \int\limits_{\tilde{S}\times(\Om\setminus\tilde{S})} \vp\big( |u(x)-\tilde{u}^\e(x)|,x,y\big)a(x-y)\,dxdy
\\
&+\int\limits_{(\Om\setminus\tilde{S})\times\tilde{S}}
 \vp\big(|u(y)-\tilde{u}^\e(y)|,x,y\big)a(x-y)\,dxdy
 \\
 \leqslant & 2^{p_+-1}\b \int\limits_{\tilde{S}\times\tilde{S}}  \vp\big(|u(x)-\tilde{u}^\e(x)|,x,y\big)a(x-y)\,dxdy
 \\
& +2^{p_+-1}\b
 \int\limits_{\tilde{S}\times\tilde{S}}  \vp\big(|u(y)-\tilde{u}^\e(y)|,x,y\big)a(x-y)\,dxdy
  \\
&+ \int\limits_{\tilde{S}\times(\Om\setminus\tilde{S})} \vp\big( |u(x)-\tilde{u}^\e(x)|,x,y\big)a(x-y)\,dxdy
\\
&+\int\limits_{(\Om\setminus\tilde{S})\times\tilde{S}}
 \vp\big(|u(y)-\tilde{u}^\e(y)|,x,y\big)a(x-y)\,dxdy
 \\
  \leqslant & (2^{p_+}\b+ 2)\b c_1
   \|a\|_{L_1(\mathds{R}^d)}
\Big(\|u-\tilde{u}^\e\|_{L_{p_-}(\tilde{S})}^{p_-}+ \|u-\tilde{u}^\e\|_{L_{p_+}(\tilde{S})}^{p_+}\Big)\to+0
\end{aligned}
\end{equation}
as $\e\to+0.$ Hence, we can approximate the function $u$ by an infinitely differentiable compactly supported function $\tilde{u}^\e$ and this proves the density of $C_0^\infty(\Om)$ in $\cL(\Om).$

We proceed to proving the separability of the space $\cL(\Om).$ Let $\tilde{\Om}_n,$ $n\in\mathds{N},$ be a monotonically increasing sequence of bounded domains covering $\Om,$ that is,
\begin{equation*}
\tilde{\Om}_n\subset\tilde{\Om}_{n+1},\qquad \bigcup\limits_{n\in\mathds{N}}\tilde{\Om}_n=\Om.
\end{equation*}
By $\cP_n$ we denote the set of all polynomials on $\Om_n$ with rational coefficients and each such polynomial is extended by zero outside $\Om_n.$ Since $\cP_n\subset L_\infty(\Om_n)$ and the extensions of all functions in $\cP_n$ are compactly supported in $\Om,$ we have
\begin{equation*}
\cP_n\subset\cL(\Om),\qquad \cP:=\bigcup\limits_{n\in\mathds{N}} \cP_n\subset \cL(\Om).
\end{equation*}
The set $\cP$ is obviously countable.

For each $u\in C_0^\infty(\Om)$ there exists $n$ such that $\supp u \subset \tilde{\Om}_n$ and by the Stone--Weierstrass theorem for each $\e>0$ there exists a function $\hat{u}_\e\in \cP_n$ such that $\|u-\hat{u}\|_{C(\overline{\tilde{\Om}_n})}<\e.$ Proceeding then as in (\ref{4.2}), we see that $F(u-\hat{u}_\e)\to+0$ as $\e\to+0.$ This proves the separability of $\cL(\Om)$ and completes the proof of Theorem~\ref{th2}.

\section{Space $\L(\Om)$}

In this section we study the space $\L(\Om).$ We begin with proving that the functional $|\cdot|_{p,\Om}$ is indeed a norm on $\L(\Om).$

\begin{lemma}\label{lm5.2}
The functional $|\cdot|_{p,\Om}$ is a norm on $\L(\Om).$
\end{lemma}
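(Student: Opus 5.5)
We must check three properties of $|\cdot|_{p,\Om}$ on cosets: it is well defined, it is absolutely homogeneous and subadditive, and it is positive definite. Well-definedness is already noted after (\ref{2.16}): $F(u)$ depends only on the differences $u(x)-u(y)$, so it is the same for every representative. Homogeneity and the triangle inequality will follow as in the proof of Theorem~\ref{th1}, by reproducing \cite[Lemma~3.2.2]{HH}.

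For homogeneity, $F(\mu u)=F(|\mu|u)$ because $\vp$ depends on $|u(x)-u(y)|$. Hence $|\mu u|_{p,\Om}=|\mu|\,|u|_{p,\Om}$ by rescaling $\l$ in the infimum.

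For subadditivity, take $\l_1>|u|_{p,\Om}$ and $\l_2>|v|_{p,\Om}$. Convexity of $\vp$ in $t$ and its monotonicity (Lemma~\ref{lm:MonCon}) give pointwise
\[
\vp\Big(\frac{|u(x)+v(x)-u(y)-v(y)|}{\l_1+\l_2},x,y\Big)\leqslant \frac{\l_1}{\l_1+\l_2}\vp\Big(\frac{|u(x)-u(y)|}{\l_1},x,y\Big)+\frac{\l_2}{\l_1+\l_2}\vp\Big(\frac{|v(x)-v(y)|}{\l_2},x,y\Big).
\]
Integrating against $a(x-y)$ yields $F\big(\frac{u+v}{\l_1+\l_2}\big)\leqslant1$. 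Letting $\l_1,\l_2$ decrease to the norms gives the triangle inequality. The same bound shows that $\L(\Om)$ is a linear space.

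The main point is definiteness: if $|\mathbf U|_{p,\Om}=0$, we must show $u$ is constant a.e. on $\Om$. By (\ref{3.17a}), $F(u)\leqslant\b|u|^{p_-}_{p,\Om}$ for small norm, so $F(u)=0$. Since $\vp(t,x,y)>0$ for $t>0$ by (\ref{2.29}), we get $u(x)=u(y)$ for a.e. $(x,y)$ with $a(x-y)>0$. In particular, by (\ref{2.17}), this holds for a.e. $(x,y)\in\Om\times\Om$ with $x-y\in B_0$.

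It follows that $u$ is a.e. constant on $(z+\tfrac12B_0)\cap\Om$ for a.e. $z$. Indeed, by Fubini, for a.e. $z$ we have $u(x)=u(z)$ for a.e. $x\in\Om$ with $x-z\in B_0$, so $u$ is constant on each such neighbourhood. Hence $u$ is locally a.e. constant on $\Om$. The set where the local constant value equals a given number is then open, and by connectedness $u$ is a.e. constant on each component $\Om_i$.

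Passing between components uses (\ref{2.19}). Since $\dist(\Om_i,\Om_{i+1})<\diam B_0$, there are points of positive-measure neighbourhoods in $\Om_i$ and $\Om_{i+1}$ whose differences lie in $B_0$. Here I expect a care point: (\ref{2.19}) bounds the distance by $\diam B_0$, while the conclusion $u(x)=u(y)$ requires $x-y\in B_0$, which has radius $\frac12\diam B_0$. I would read the assumption as guaranteeing sets of positive measure $E_i\subset\Om_i$, $E_{i+1}\subset\Om_{i+1}$ with $E_i-E_{i+1}\subset B_0$, possibly after adjusting constants. Then $u(x)=u(y)$ on $E_i\times E_{i+1}$ forces the constants on $\Om_i$ and $\Om_{i+1}$ to coincide, so by induction $u$ is a single constant a.e. on $\Om$ and $\mathbf U=0$.
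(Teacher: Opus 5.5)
Your proposal is correct and follows essentially the paper's route. From $F(u)=0$ you get $u(x)=u(y)$ for a.e.\ $(x,y)$ with $x-y\in B_0$, then a.e.\ constancy on each component by connectedness, then gluing of the components via (\ref{2.19}), with homogeneity and subadditivity handled as in the proof of Theorem~\ref{th1}; your open-set connectedness argument is in fact tighter than the paper's covering of $\Om_i$ by small overlapping sets.

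The care point you flag is genuine and affects the paper's proof equally: taking $x\in\om_i$, $y\in\om_{i+1}$ in (\ref{3.18}) needs $|x-y|$ smaller than the radius of $B_0$, not its diameter. Indeed, for $a=\chi_{B_0}$ and two components at a distance strictly between the radius and the diameter of $B_0$, the function $u=\chi_{\Om_1}$ gives $F(u/\l)=0$ for all $\l>0$, so the lemma fails. Hence (\ref{2.19}) must be read with the radius of $B_0$ in place of its diameter, exactly as you propose.
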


\begin{proof}
The functional $|\cdot|_{p,\Om}$ is finite on each element of  the space $\cL(\Om).$  Assume that $|\mathbf{U}|_{p,\Om}=0,$ then
$F(u)=0$ for each $u\in\mathbf{U}$ and hence,
\begin{equation*}
\vp\big(|u(x)-u(y)|,x,y\big)a(x-y)=0\quad\text{a.e. in}\quad \Om\times\Om.
\end{equation*}
The inequality (\ref{2.17}) and the left inequality in (\ref{3.28})
 then imply
 \begin{equation}\label{3.18}
u(x)-u(y)=0\quad\text{for a.e.}\quad x\in\Om,\quad y\in\Om,\quad x-y\in B_0.
\end{equation}
Let $\Om_i$ be a connected component of the domain $\Om$ and  $\om\subset\Om_i$ be an arbitrary subset of $\Om_i$ of positive measure such that $\om$ can be put into a ball of  diameter not exceeding the radius of $B_0.$ Then by (\ref{3.18}) we have
\begin{equation*}
u(x)-u(y)=0\quad\text{for a.e.}\quad x,y\in \om.
\end{equation*}
Integrating this identity over $y\in \om,$ we find
\begin{equation*}
u(x)=\frac{1}{\mes\om} \int\limits_{\om} u(y)\,dy\quad \text{for a.e.}\quad x\in \om.
\end{equation*}
Hence, the function $u$ is constant almost everywhere on $\om.$  Then we cover the connected component $\Om_i$ by open domains $\om_j$ and suppose that each of these domains can be put   into a ball of  diameter not exceeding the radius of $B_0.$ The function $u$ is equal to a  constant $C_j$ almost everywhere on each domain $\om_j.$ Since for each domain $\om_j$ there is at least one another domain $\om_k$ such that the intersection $\om_j\cap\om_k$ has a positive measure, we conclude that $C_j=C_k$ and therefore, the function $u$ is equal to a single constant almost everywhere in $\Om_i.$

Therefore, the function $u$ is constant almost everywhere on each connected component of $\Om,$ but the value of this constant can depend on the component. Let $\Om_i$ and $\Om_{i+1}$ be two connected components of $\Om_i$ and $u=\tilde{C}_j,$ $\tilde{C}_j=const,$ almost everywhere on $\Om_j,$ $j=i,\,i+1.$ Owing to the condition~(\ref{2.19}),  we can choose two subsets $\om_j\subset\Om_j,$ $j=i,\,i+1,$ of positive measures such that $\dist(\om_i,\om_{i+1})<\diam B_0.$ Taking then $x\in\om_i,$ $y\in\om_{i+1}$ in (\ref{3.18}), we immediately conclude that $\tilde{C}_i=\tilde{C}_{i+1}.$ Hence, the function $u$ equals to a single  constant almost everywhere on   $\Om$ and therefore, $\mathbf{U}=0.$ The homogeneity for the functional $|\cdot|_{p,\Om}$  on $\cL(\Om)$
 and triangle inequality,  which were established in the proof of Theorem~\ref{th1}, imply the same properties for the functional $|\cdot|_{p,\Om}$  on $\L(\Om).$  The proof is complete.
\end{proof}

\subsection{Bounded domain with Lipschitz boundary}

In this subsection we prove Theorem~\ref{th5}.
The fact that the functional $|\cdot|_{p,\Om}$ is indeed a norm on $\L(\Om)$ can be proved in the same way as this has been done above for $\cL(\Om).$ It is also clear that
\begin{equation}\label{6.17}
|u|_{p,\Om}\leqslant \|u\|_{\cL(\Om)}\quad\text{for all}\quad u\in \cL(\Om).
\end{equation}

We choose   arbitrary $u\in\L(\Om),$
  $\a\in(0,1)$ and we denote
\begin{equation*}
\Xi_\a:=\left\{(x,y)\in\Om\times\Om:\, \frac{|u(x)-u(y)|}{|u|_{p,\Om}} \geqslant \a
\right\}.
\end{equation*}
Since the measure of $\Om$ is finite,   we can choose $\a\in(0,1)$ small enough but fixed so that
\begin{equation}\label{6.5}
\a^{p_-} \int\limits_{\Om\times\Om}
a(x-y)  \,dxdy\leqslant 1.
\end{equation}
In what follows $\a$ is supposed to satisfy the above estimate.
By the left inequality in (\ref{3.28}) for $(x,y)\in \Xi_\a$ we get
\begin{equation}\label{6.3}
\vp\left(\left|\frac{u(x)-u(y)}{\a|u|_{p,\Om}}\right|,x,y\right)\geqslant  \b^{-1}c_1^{-1} \left|\frac{u(x)-u(y)}{\a|u|_{p,\Om}}\right|^{p_-}.
\end{equation}
We then use the left inequality in (\ref{3.22}) with $\l=\a$ to obtain
\begin{equation*}
\vp\left(\left|\frac{u(x)-u(y)}{ |u|_{p,\Om}}\right|,x,y\right)\geqslant \b^{-2}c_1^{-1} \a^{p_+ - p_-} \left|\frac{u(x)-u(y)}{|u|_{p,\Om}}\right|^{p_-}.
\end{equation*}
Hence,
\begin{equation}\label{6.4}
\int\limits_{\Xi_\a} \vp\left(\left|\frac{u(x)-u(y)}{\a|u|_{p,\Om}}\right|,x,y\right)
 a(x-y)  \,dxdy
\geqslant \b^{-2}c_1^{-1}\a^{p_+ - p_-} \int\limits_{\Xi_\a}
\left|\frac{u(x)-u(y)}{|u|_{p,\Om}}\right|^{p_-}a(x-y) \,dxdy.
\end{equation}
The definition of $\Xi_\a$ implies
\begin{equation}\label{6.1}
\int\limits_{(\Om\times\Om)\setminus\Xi_\a}
\left|\frac{u(x)-u(y)}{|u|_{p,\Om}}\right|^{p_-}a(x-y) \,dxdy \leqslant   \a^{p_-} \int\limits_{(\Om\times\Om)\setminus\Xi_\a}
a(x-y) \,dxdy
\leqslant   1=F\left(\frac{u}{|u|_{p,\Om}}\right).
\end{equation}

We denote
\begin{equation*}
F_{p_-}(u):= \int\limits_{\Om\times\Om}
 |u(x)-u(y)|^{p_-} a(x-y) \,dxdy
\end{equation*}
The estimates (\ref{6.1}) and (\ref{6.4}) yield
\begin{equation*}
F_{p_-}\left(\frac{u}{|u|_{p,\Om}}\right)\leqslant \big(1+\b^2c_1\a^{p_- - p_+}\big) F\left(\frac{u}{|u|_{p,\Om}}\right)=1+\b^2c_1\a^{p_- - p_+},
\end{equation*}
and hence
\begin{equation}\label{6.6}
F_{p_-}(u)\leqslant \big(1+\b^2c_1\a^{p_- - p_+}\big)|u|_{p,\Om}^{p_-}.
\end{equation}
Now we apply Proposition~4.2 from \cite[Ch. 4, Sect. 4.4]{AABPT}, which states the existence of a constant $C>0$ independent of $u$ but depending on $\Om$ such that
\begin{equation}\label{6.7}
\|u_{\bot,\Om}\|_{L_{p_-}(\Om)}^{p_-}\leqslant C F_{p_-}(u),\qquad u_{\bot,\Om}:=u-\la u\ra_\Om.
\end{equation}
This estimates and (\ref{6.6}) imply that the function $u_{\bot,\Om}$ belongs
to $L_{p_-}(\Om)$ and
\begin{equation}\label{6.8}
  \|u_{\bot,\Om}\|_{L_{p_-}(\Om)}\leqslant C |u|_{p,\Om}
\end{equation}
with some $C$ independent of $u.$
By the H\"older inequality we also have
\begin{equation*}
|\la u\ra_\Om|\leqslant (\mes \Om)^{1-\frac{1}{p_-}}\|u\|_{L_{p_-}(\Om)}
\end{equation*}
This inequality, the obvious identity
\begin{equation}\label{6.2}
F(u)=F(u_{\bot,\Om})
\end{equation}
 and  (\ref{6.8}) imply the left inequality in (\ref{2.13}). The right inequality in (\ref{2.13}) follows from (\ref{6.2})  and
\begin{equation*}
\|u\|_{L_{p_-}(\Om)}=\|u_{\bot,\Om}+\la u\ra_\Om\|_{L_{p_-}(\Om)}\leqslant \|u_{\bot,\Om}\|_{L_{p_-}(\Om)}+|\la u\ra_\Om| (\mes\Om)^{\frac{1}{p_-}}.
\end{equation*}
The proof is complete.

\subsection{Arbitrary domain}

In this section we prove Theorem~\ref{th6}. It follows from the definition (\ref{2.3}), (\ref{2.16}) of functional $|\cdot|_{p,\Om}$ on $L_{1,loc}(\Om)$ and $\L(\Om)$ and the definitions (\ref{3.8}), (\ref{3.9}) of  functional $h$ and space $\cH(\Om\times\Om)$ that for each $\mathbf{U}\in\L(\Om)$ we have $|\mathbf{U}|_{p,\Om}=h(U),$ where
 \begin{equation}\label{6.9}
 U=U(x,y):=u(x)-u(y)
\end{equation}
for each $u\in\mathbf{U}$ and the above definition of function $U$ is independent of the particular choice of an element $u$ in the coset $\mathbf{U}\in \L(\Om).$ Hence,  the space $\L(\Om)$ can be identified with the functions $U\in \cH(\Om\times\Om),$ which obey the representation (\ref{6.9}) with some function $u\in\mathbf{U},$ $\mathbf{U}\in\L(\Om).$  Reproducing literally the proof of Lemma~3.6.6 in   \cite[Ch. I\!I\!I, Sect. 3.6]{HH}, we see that  the norm  $|u|_{p,\Om}$ is  uniformly convex on $\L(\Om).$

We proceed to proving  that   the space $\L(\Om)$ is Banach.
Since $\Om$ is an open domain, its connected component $\Om_1,$ see (\ref{2.19}),
  contains a non--empty ball $B$ of a diameter  not exceeding $\frac{1}{2}\diam B_0.$

\begin{lemma}\label{lm5.4} Let $\om\subset\Om$ be a bounded domain with an infinitely differentiable boundary such that $B\subset \om.$ Then each $\mathbf{U}\in\L(\Om)$ contains a unique representative $u$ such that
\begin{equation}\label{6.18}
\la u\ra_B=0
\end{equation}
and the estimate
\begin{equation}\label{6.19}
 |\la u\ra_{\om}|\leqslant c_{11}\|U\|_{\L(\om)}
\end{equation}
holds, where $c_{11}$ is some constant independent of $u,$ but depending on $\om$ and $B.$
\end{lemma}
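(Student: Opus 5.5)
We plan to reduce everything to Theorem~\ref{th5} applied on the smooth bounded domain $\om$. Fix $\mathbf{U}\in\L(\Om)$ and any $w\in\mathbf{U}$. Restricting to $\om\times\om$ and using the non-negativity of the integrand, $F_\om(w/\l)\leqslant F_\Om(w/\l)$ for every $\l>0$. Hence $|w|_{p,\om}\leqslant|w|_{p,\Om}<\infty$, so the coset of $w|_\om$ belongs to $\L(\om)$. Here $\|U\|_{\L(\om)}$ is understood as $|w|_{p,\om}$; it is independent of the representative, and by what was just said it is bounded by $|\mathbf{U}|_{p,\Om}$.

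Since $\om$ is bounded, connected (we take $\om$ connected, as a smooth domain containing $B$; otherwise replace it by its component containing $B$) and Lipschitz, Theorem~\ref{th5}, and more precisely estimate~(\ref{6.8}) obtained in its proof via (\ref{6.6}) and the Poincar\'e-type inequality (\ref{6.7}), gives
\[
\|w-\la w\ra_\om\|_{L_{p_-}(\om)}\leqslant C_\om |w|_{p,\om}.
\]
In particular $w\in L_{p_-}(\om)\subset L_1(\om)$, so the mean $\la w\ra_B$ is finite, because $B\subset\om$ has positive measure.

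Define $u:=w-\la w\ra_B$. Then $u\in\mathbf{U}$ and $\la u\ra_B=0$, which is (\ref{6.18}). For uniqueness, two representatives with zero mean over $B$ differ by a constant whose mean over $B$ vanishes, so the constant is zero.

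For (\ref{6.19}), apply the same inequality to $u$, whose restriction to $\om$ lies in the same coset. Writing $v:=u-\la u\ra_\om$, we have $\la u\ra_B=0$, so
\[
\la u\ra_\om=-\la v\ra_B .
\]
By H\"older,
\[
|\la v\ra_B|\leqslant(\mes B)^{-1/p_-}\|v\|_{L_{p_-}(\om)}\leqslant (\mes B)^{-1/p_-}C_\om|u|_{p,\om}.
\]
This yields (\ref{6.19}) with $c_{11}=(\mes B)^{-1/p_-}C_\om$.

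The only delicate point is that Theorem~\ref{th5} as stated concerns $\cL(\om)$, i.e.\ it presupposes $u\in L_{p_-}(\om)$. We therefore invoke its proof, namely (\ref{6.6})–(\ref{6.8}), for a general coset with finite $|\cdot|_{p,\om}$. There one must check that Proposition~4.2 of \cite{AABPT}, or a truncation argument with $\min\{\max\{w,-n\},n\}$, which does not increase $F_{p_-}$, together with Fatou, applies to merely $L_{1,loc}$ functions with $F_{p_-}(w)<\infty$. We also need the lower bound (\ref{2.17}) of $a$ to be usable on $\om$; this holds since $\om$ is connected, as in Lemma~\ref{lm5.2}. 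I expect this justification to be the main obstacle.
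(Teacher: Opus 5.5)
Your argument is correct, but it takes a different route from the paper.

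\textbf{The paper's route.} The paper never passes through $L_{p_-}$. Using $\la u\ra_B=0$ it writes $-\la u\ra_\om\mes\om\mes B=\int_B\int_\om(u(x)-u(y))\,dy\,dx$. It then covers $\om$ by finitely many small balls $B_j$ and chains each $B_j\cap\om$ to $B$ through overlapping balls, so that every difference that appears has its arguments close enough for the lower bound (\ref{2.17}) to apply. This bounds $\int_B\int_\om|u(x)-u(y)|\,dy\,dx$ by $C\int_{\om\times\om}|u(x)-u(y)|a(x-y)\,dx\,dy$, and the $L_1$-type estimate (\ref{3.20a}) on the bounded set $\om$ finishes. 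That proof is elementary and self-contained: only (\ref{2.17}), connectedness of $\om$ and (\ref{3.20a}) are used. Moreover, finiteness of the double integral itself yields $u\in L_1(\om)$, so no a priori integrability is needed.

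\textbf{Your route.} You reuse (\ref{6.6})--(\ref{6.8}) on $\om$ and then apply H\"older on $B$. This is shorter. It also gives the explicit constant $c_{11}=(\mes B)^{-1/p_-}C_\om$ and the stronger bound $\|u-\la u\ra_\om\|_{L_{p_-}(\om)}\leqslant C_\om|u|_{p,\om}$. However, it rests on the cited Poincar\'e inequality (\ref{6.7}), which is formulated for $L_{p_-}$ functions. That is why you need the truncation step, which the paper's own proof of Theorem~\ref{th5} also tacitly needs.

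\textbf{Closing the truncation step.} Add one line. Let $T_n$ be the truncation at level $n$ and $m_n:=\la T_nw\ra_\om$. The uniform bound $\|T_nw-m_n\|^{p_-}_{L_{p_-}(\om)}\leqslant C F_{p_-}(w)$ (with $F_{p_-}$ computed over $\om\times\om$) forces $(m_n)$ to be bounded. Otherwise $|T_nw-m_n|\to\infty$ a.e. along a subsequence, contradicting Fatou. Fatou along a subsequence $m_n\to m$ then gives $w-m\in L_{p_-}(\om)$. Complex-valued $w$ is treated via $\RE w$ and $\IM w$.

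\textbf{Minor point.} Your parenthetical about replacing $\om$ by the component containing $B$ would not give (\ref{6.19}) for $\om$ itself. It is moot, since a domain is connected.
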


\begin{proof}
It is obvious that the condition (\ref{6.18}) determines uniquely an element in each coset $\mathbf{U}\in\L(\Om).$ Since the domain $\om$ is bounded and its domain is infinitely differentiable,    we can cover this domain by finitely many intersecting balls $B_j,$ $j=1,\ldots,N,$ such that
\begin{equation}\label{6.23}
\diam B_j\leqslant \frac{1}{2}\diam B_0,\qquad \mes B_j\cap\om\geqslant c_{12}>0,\qquad j=1,\ldots,N,
\end{equation}
where $c_{12}$ is some constant independent of $j.$

Integrating the difference $u(x)-u(y)$ over $x\in B$ and $y\in\om,$ in view of the condition (\ref{6.18}) we easily find
\begin{equation*}
-\la u\ra_\om\mes \om \mes B= \int\limits_{B} dx\int\limits_{\om} (u(x)-u(y))\,dy
\end{equation*}
and hence,
\begin{equation}\label{6.24}
|\la u\ra_\om|\leqslant \frac{1}{\mes \om\mes B}\int\limits_{B} dx\int\limits_{\om} |u(x)-u(y)|\,dy\leqslant \frac{1}{\mes\om\mes B} \sum\limits_{j=1}^{N} \int\limits_{B} dx\int\limits_{B_j\cap\om} |u(x)-u(y)|\,dy.
\end{equation}
For each $j=1,\ldots,N$ we choose $l_j$ balls $B_{k_i},$ $i=1,\ldots l_j,$ such that the intersections
\begin{equation*}
B_j\cap\om\cap B_{k_1},\qquad B_{k_i}\cap B_{k_{i+1}},\quad i=1,\ldots,l_j-1,\qquad B_{k_{l_j}}\cap B
\end{equation*}
have positive measures and each of them can be put into a ball of a diameter not exceeding $\frac{1}{2}\diam B_0.$ Taking into consideration (\ref{6.23}), (\ref{2.17}),   by straightforward calculations we obtain
\begin{align*}
\int\limits_{B} dx\int\limits_{B_j\cap\om} |u(x)-u(y)|\,dy=&\prod\limits_{i=1}^{l_j}\frac{1}{\mes B_{k_i}\cap\om}
\int\limits_{B_{k_1}}dz_1 \cdots \int\limits_{B_{k_{l_j}}}dz_{l_j}
\int\limits_{B} dx\int\limits_{B_j\cap\om} |u(x)-u(y)|\,dy
\\
\leqslant & \frac{1}{c_0 c_{12}^{l_j}} \int\limits_{B_{k_1}\cap \om}dz_1 \cdots \int\limits_{B_{k_{l_j}}\cap \om}dz_{l_j}
\int\limits_{B} dx\int\limits_{B_j\cap\om} |u(y)-u(z_1)|a(y-z_1)\,dy
\\
&+\sum\limits_{i=1}^{l_j-1} \frac{1}{c_0  c_{12}^{l_j}}\int\limits_{B_{k_1}\cap \om}dz_1 \cdots \int\limits_{B_{k_{l_j}}\cap \om}dz_{l_j}
\int\limits_{B} dx\int\limits_{B_j\cap\om} |u(z_i)-u(z_{i+1})| a(z_i-z_{i+1})\,dy
\\
&+ \frac{1}{c_0   c_{12}^{l_j}} \int\limits_{B_{k_1}\cap \om}dz_1 \cdots \int\limits_{B_{k_{l_j}}\cap \om}dz_{l_j}
\int\limits_{B} dx\int\limits_{B_j\cap\om} |u(z_{l_j})-u(x)|a(z_{l_j}-x)\,dy
\\
\leqslant & C \int\limits_{B_{k_1}\cap \om}  \int\limits_{B_j\cap\om} |u(y)-u(z_1)| a(y-z_1)\, dz_1 dy
\\
&+ C  \int\limits_{B_{k_{l_j}}\cap \om}
\int\limits_{B} |u(z_{l_j})-u(x)|a(z_{l_j}-x) \, dz_{l_j}dx
\\
&+ C\sum\limits_{i=1}^{l_j-1} \int\limits_{B_{k_i}\cap \om}  \int\limits_{B_{k_{i+1}}\cap \om}|u(z_i)-u(z_{i+1})| a(z_i-z_{i+1})\, dz_i dz_{i+1}
\\
\leqslant & C \int\limits_{\om\times\om} |u(x)-u(y)| a(x-y)\,dxdy, 
\end{align*}
where $C$ are some absolute constants independent of $u.$ Using estimates (\ref{3.20a}), (\ref{6.24}), we obtain (\ref{6.19}) and complete the proof.
\end{proof}

We choose a fundamental sequence $\mathbf{U}_n$ in $\L(\Om)$ and fix the elements $u_n\in\mathbf{U}_n$ by the condition (\ref{6.18}). Let $\Om_m\subseteq\Om,$ $m\in\mathds{N},$ be a monotone sequence of bounded subdomains of $\Om$ with infinitely differentiable boundaries, which exhausts the domain $\Om,$ namely,
\begin{equation}\label{6.15}
\om\subset\Om_m,\quad \Om_m\subseteq\Om_{m+1},\quad m\in\mathds{N},\qquad \bigcup\limits_{m\in\mathds{N}} \Om_m=\Om.
\end{equation}
The monotonicity of the functional $F(\frac{u}{\l})$ in $\l$ established in Section~\ref{sec2} and Equation~(\ref{3.16}) for the norm $|\cdot|_{p,\Om}$ imply
\begin{equation}\label{6.20}
|u|_{p,\Om_m}\leqslant |u|_{p,\Om_{m+1}}\leqslant |u|_{p,\Om},\qquad m\in\mathds{N}.
\end{equation}
This inequality yields that
\begin{equation}\label{6.21}
|u_n|_{p,\Om_m}\leqslant |u_n|_{p,\Om}\leqslant C,\qquad |u_n-u_k|_{p,\Om_m}\leqslant |u_n-u_k|_{p,\Om}\to0,\quad n,k\to+\infty,
\end{equation}
uniformly  in $m\in\mathds{N},$ where $C$ is some constant independent of $n$ and $m.$  Then it follows from (\ref{6.19}) and (\ref{2.13}) with $\Om$ replaced by $\Om_m$ that for each $m\in\mathds{N}$  the sequence $u_n$ is fundamental in $\cL(\Om_m).$ Hence, by Theorem~\ref{th5}, it converges in $\cL(\Om_m),$ $L_{p_-}(\Om_m)$ and almost everywhere in $\Om_m$ to some function $u^{(m)}.$ It is clear that $u^{(m+1)}=u^{(m)}$ on $\Om_m$ and this is why the function $u(x):=u^{(m)}(x),$ $x\in\Om_m,$ is well--defined on $\Om,$ and it is a limit of the sequence $u_n$ in the sense of almost everywhere convergence in $\Om$ and of the convergence in $L_{p_-}(\Om_m)$ and $\L(\Om_m)$ for each $m.$ Therefore, for each $m\in\mathds{N}$ there exists $N_1=N_1(m)$ such that
\begin{equation}\label{6.22}
|u_{N_1}-u|_{p,\Om_m}\leqslant 1.
\end{equation}
By (\ref{6.21})   this inequality yields
\begin{align*}
&|u|_{p,\Om_m}\leqslant |u-u_{N_1}|_{p,\Om_m}+|u_{N_1}|_{p,\Om_m} \leqslant C+1,
\\
&\int\limits_{\Om_m\times\Om_m}  \vp\left(\left|\frac{u(x)}{C+1}-\frac{u(y)}{C+1}\right|,x,y\right)a(x-y)\,dxdy\leqslant 1,
\end{align*}
uniformly in $m\in\mathds{N}.$  Passing to the limit as $m\to+\infty$ in the second inequality and using (\ref{6.15}), we find that
\begin{equation*}
F\left(\frac{u}{C+1}\right)\leqslant 1
\end{equation*}
and hence, $u\in\L(\Om).$

The convergence in (\ref{6.21}) implies that for each $\d>0$ there exists $N=N_2(\d)$ such that for $n,k\geqslant N_2$ we have
\begin{equation*}
|u_n-u_k|_{p,\Om_m}\leqslant \d
\end{equation*}
uniformly in $m\in\mathds{N}.$ We pass to limit as $k\to+\infty$ in this inequality and we get
\begin{equation*}
|u_n-u|_{p,\Om_m}\leqslant \d
\end{equation*}
for $n\geqslant N_2(\d)$ uniformly in $m\in\mathds{N}.$ Hence,
\begin{equation*}
  \int\limits_{\Om_m\times\Om_m} \vp\left( \left|\frac{u_n(x)-u(x)}{\d}-\frac{u_n(y)-u(y)}{\d}\right|,x,y\right) a(x-y) \,dxdy\leqslant 1
\end{equation*}
uniformly in $m\in\mathds{N}.$ Passing to the limit as $m\to+\infty$ and using (\ref{6.15}), we obtain
\begin{equation*}
F\left(\frac{u_n-u}{\d}\right)\leqslant 1
\end{equation*}
and in view of the monotonicity of the functional $F\left(\frac{u}{\l}\right)$ in $\l$ and Equation~(\ref{3.16}) with $u$ replaced by $u_n-u$ we have the estimate
\begin{equation*}
|u_n-u|_{p,\Om}\leqslant \d
\end{equation*}
for $n\geqslant N_2(\d).$ This means that $|u_n-u|_{p,\Om}\to+0$ as $n\to+\infty.$  The proof is complete.

\section{Dual spaces}\label{sec7}

In this section we  describe the dual spaces of $\L(\Om)$ and $\cL(\Om).$

\subsection{Auxiliary statements}

A key ingredient in the description of the dual space 
is the following lemma, which is of an independent interest.

\begin{lemma}\label{lm5.1}
Let $X$ be a Banach space with a uniformly convex norm.
Then for each functional $\phi\in X^*$  there exists a unique $w\in X$ such that
\begin{equation}\label{5.3}
\|w\|_X=\|\phi\|_{X^*},\qquad\phi(w)=\|\phi\|_{X^*}^2.
\end{equation}
Suppose, in  addition,  that for all $w,\,v\in X$ obeying the inequalities
\begin{equation}\label{5.21}
w\ne0,\qquad  \|w + t v\|_X\geqslant \|w\|_X,\quad t\in[0,t_0],
\end{equation}
the estimate
\begin{equation}\label{5.1}
\|w + t v\|_X\leqslant \|w\|_X + t \ell(w,v)+\g(t;w,v),
\end{equation}
is satisfied for $t\in[0,t_0],$ where $t_0=t_0(w,v),$ and $\ell(w,v)$ is some functional on $X\times X$ obeying the property
 \begin{equation}\label{5.22}
\ell(w,-v)=-\ell(w,v),
\end{equation}
while $\g(t;w,v)$ is some function on $t$ depending on the choice of $w$ and $v$ such that
\begin{equation}\label{5.2}
\lim\limits_{t\to+0} \frac{\g(t;w,v)}{t}=0.
\end{equation}
Then   different functionals in $X^*$ can not have the same function $w$ obeying (\ref{5.3}).
\end{lemma}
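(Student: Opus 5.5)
The argument has two parts: existence and uniqueness of the element $w$ from (\ref{5.3}), and then the fact that $w$ determines $\phi$ under (\ref{5.1}). If $\phi=0$ we take $w=0$; this is the only element with $\|w\|_X=0$, so both claims are trivial. For $\phi\ne0$ we normalize and construct $w$ as a norm-attaining element. Take a sequence $x_n$ with $\|x_n\|_X=1$ and $\phi(x_n)\to\|\phi\|_{X^*}$. Then
\[
\phi\Big(\tfrac{x_n+x_m}{2}\Big)\to\|\phi\|_{X^*},
\qquad\text{hence}\qquad
\Big\|\tfrac{x_n+x_m}{2}\Big\|_X\to1 .
\]
Uniform convexity now forces $\|x_n-x_m\|_X\to0$. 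Since $X$ is complete, $x_n\to x$ with $\|x\|_X=1$ and $\phi(x)=\|\phi\|_{X^*}$. We set $w:=\|\phi\|_{X^*}x$, which gives (\ref{5.3}). For uniqueness, suppose $w_1\ne w_2$ both satisfy (\ref{5.3}). Then $\phi\big(\tfrac{w_1+w_2}{2}\big)=\|\phi\|^2_{X^*}$, so $\big\|\tfrac{w_1+w_2}{2}\big\|_X\ge\|\phi\|_{X^*}$. Uniform convexity gives $\big\|\tfrac{w_1+w_2}{2}\big\|_X<\|\phi\|_{X^*}$, a contradiction. In a complex space we first pass to $\RE\phi$ where needed; since $\phi(w)$ is required to be real and positive, this causes no trouble.

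For the second claim, let $\phi,\psi\in X^*$ share the same $w$, with $w\ne0$; then $\|\phi\|_{X^*}=\|\psi\|_{X^*}=\|w\|_X=:\mu$. The key step is to show $\RE\phi(v)=\ell(w,v)\mu$ for every $v\in X$, with the right-hand side independent of the functional. Consider the admissibility condition (\ref{5.21}) for the pair $w,v$. If $\|w+tv\|_X\ge\|w\|_X$ for small $t\ge0$ fails, we may still argue as follows. Write
\[
\mu^2+t\,\RE\phi(v)=\RE\phi(w+tv)\le\mu\,\|w+tv\|_X .
\]
Whenever (\ref{5.1}) is available, this yields $\mu^2+t\RE\phi(v)\le\mu^2+t\mu\ell(w,v)+\mu\g(t)$. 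Dividing by $t$ and letting $t\to+0$ using (\ref{5.2}) gives $\RE\phi(v)\le\mu\ell(w,v)$.

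To make (\ref{5.21}) hold, split into cases by the sign of $\RE\phi(v)$. If $\RE\phi(v)\ge0$, then
\[
\|w+tv\|_X\ge\mu^{-1}\RE\phi(w+tv)\ge\mu ,
\]
so (\ref{5.21}) holds automatically and we obtain $0\le\RE\phi(v)\le\mu\ell(w,v)$. If instead $\RE\phi(v)<0$, we apply the same reasoning to $-v$ and use (\ref{5.22}) to get $\RE\phi(v)\ge\mu\ell(w,v)$.

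This one-sided information for each functional is not yet enough to conclude, and closing this gap is the main obstacle. One sharper route would be to exploit the nonnegativity of $\ell$ more cleverly. I would rather attack the difference $\chi:=\phi-\psi$, which satisfies $\chi(w)=0$. Take $v$ with $\RE\chi(v)>0$ and, replacing $v$ by $v+sw$ for suitable real $s$, arrange $\RE\psi(v)=0$ while $\RE\phi(v)>0$. Using $\RE\psi(v)=0$, we have
\[
\|w\pm tv\|_X\ge\mu^{-1}\RE\psi(w\pm tv)=\mu ,
\]
so (\ref{5.21}) holds for both $v$ and $-v$. The estimates above then give $\RE\phi(v)\le\mu\ell(w,v)$ and $-\RE\phi(v)\le\mu\ell(w,-v)=-\mu\ell(w,v)$. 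Hence $\RE\phi(v)=\mu\ell(w,v)$, and likewise $\RE\psi(v)=\mu\ell(w,v)$. This gives $\RE\phi(v)=\RE\psi(v)=0$, contradicting $\RE\phi(v)>0$. Therefore $\RE\phi=\RE\psi$. In the complex case, the identity $\phi(v)=\RE\phi(v)-\iu\RE\phi(\iu v)$ then gives $\phi=\psi$.
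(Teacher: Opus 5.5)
Your proposal is correct and follows essentially the same route as the paper: existence and uniqueness of $w$ come from the uniform-convexity Cauchy/strict-convexity argument, and injectivity comes from shifting a separating vector by a multiple of $w$ so that (\ref{5.21}) holds for both $\pm v$, then applying (\ref{5.1}) in both directions together with (\ref{5.22}). The only difference is cosmetic: the paper normalizes to $\phi_1(u)=c$, $\phi_2(u)=-c$ and sums the two norm bounds directly, whereas you normalize to $\RE\psi(v)=0$ and work with real parts; also drop your aside about a ``nonnegativity of $\ell$'', which is unused and false, since $\ell(w,\cdot)$ is odd.
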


\begin{proof}
We adapt the proof of Lemma in \cite[Ch. I\!I, Sect. 2.32]{Ad}. Let $\phi\in X^*$ be a non--zero functional.  There exists a sequence $w_n\in X$ such that
\begin{equation*}
\|w_n\|_X=\|\phi\|_{X^*},\qquad |\phi(w_n)|\to \|\phi\|_{X^*}^2,\qquad n\to+\infty.
\end{equation*}
Without loss of generality we can suppose that $|\phi(w_n)|>\frac{1}{2}\|\phi\|_{X^*}^2$ and replacing  then $w_n$ by $\frac{\overline{\phi(w_n})}{|\phi(w_n)|}w_n,$ we get
\begin{equation}\label{5.4}
\|w_n\|_X=\|\phi\|_{X^*},\qquad  \phi(w_n)\to \|\phi\|_{X^*}^2,\qquad n\to+\infty.
\end{equation}

We are going to show that the sequence $w_n$ is fundamental. Suppose this is not true, then, up to  choosing an appropriate subsequence of $w_n,$ there exists $\e>0$ and
$N>0$ such that
\begin{equation*}
\|w_n-w_m\|\geqslant \e\|\phi\|_{X^*}\quad\text{for}\quad n,m>N.
\end{equation*}
By the uniform convexity of the norm there exists $\d>0$ such that
\begin{equation*}
\Big\|\frac{1}{2}(w_n+w_m)\Big\|_{X}\leqslant (1-\d)\|\phi\|_{X^*}.
\end{equation*}
Hence,
\begin{equation}\label{5.5}
\begin{aligned}
\|\phi\|_{X^*}\geqslant& \phi\left(\frac{w_n+w_m}{\|w_n+w_m\|_{X}}\right)
=\frac{1}{\Big\|\frac{1}{2}(w_n+w_m)\Big\|_{X}} \phi\Big(\frac{1}{2}(w_n+w_m)\Big)
\\
\geqslant & \frac{\phi(w_n)+\phi(w_m)}{2(1-\d)\|\phi\|_{X^*}}\to \frac{\|\phi\|_{X^*}}{1-\d},\qquad n,m\to+\infty.
\end{aligned}
\end{equation}
The obtained contradiction proves that the sequence $w_n$ is fundamental, and
it hence converges to some element $w\in X.$ It follows from (\ref{5.4}) that the element $w$ satisfies (\ref{5.3}).

The  element $w$ is unique. Indeed, if there are two functions $w$ and $\tilde{w}$ obeying (\ref{5.3}), then we apply (\ref{5.5}) with $w_m$ and $w_n$ replaced by $w$ and $\tilde{w}$ and immediately get a contraction.

Let us show that each function $w$ can not be associated  with two distinct functionals under additional assumptions (\ref{5.21}), (\ref{5.1}), (\ref{5.22}), (\ref{5.2}).  We again  argue by contradiction supposing that the relations (\ref{5.3}) hold with $\phi=\phi_1$ and $\phi=\phi_2,$ where $\phi_1$ and $\phi_2$ are two distinct functionals. Due to the first identity in (\ref{5.3}) we necessarily have
\begin{equation*}
0<\|w\|_X=\|\phi_1\|_{X^*}=\|\phi_2\|_{X^*}=:c.
\end{equation*}
Since $\phi_1\ne \phi_2,$   there exists $u\in X$ such that $\phi_1(u)\ne \phi_2(u).$ Replacing $u$ by an appropriate linear combination of $u$ and $w,$ we can always achieve the identities
\begin{equation*}
\phi_1(u)=c,\qquad \phi_2(u)=-c.
\end{equation*}
Hence,
\begin{equation}\label{5.6}
\phi_1(w+tu)=\phi_2(w-t u)=c(1+t).
\end{equation}
The definition of the norm of functional implies
\begin{equation*}
c(1+t)\leqslant \|w+tu\|_{X},\qquad c(1+t)\leqslant \|w-tu\|_{X},
\end{equation*}
where $t\in[0,t_0].$ Hence, conditions (\ref{5.21}) are satisfied with $v=u$ and $v=-u$ and it follows from (\ref{5.1}), (\ref{5.22}) that
\begin{equation*}
\|w + t u\|_X\leqslant \|w\|_X + t \ell(w,u)+\g(t;w,u), \qquad  \|w - t u\|_X\leqslant \|w\|_X - t \ell(w,u)+\g(t;w,-u).
\end{equation*}
Summing these two inequalities, we get
\begin{equation*}
2c+2ct\leqslant 2\|w\|_{X}+\g(t,u,v)+\g(-t,u,v)=2c+o(t),
\end{equation*}
and this inequality is impossible. The proof is complete.
\end{proof}

The next step in the proof of Theorem~\ref{th3} is to show that the norm of the space  $\cL(\Om)$ satisfies the assumptions of Lemma~\ref{lm5.1}. In order to do this, we first establish the following lemma.

\begin{lemma}\label{lm5.3}
The following statements hold:
\begin{enumerate}
\item\label{lm5.3-2} For all $U,\,V \in \cH(\Om\times\Om)$ the inequality
\begin{equation}\label{5.15}
\bigg|\int\limits_{\Om\times\Om} \vp'(|U(x,y)|,x,y) |V(x,y)| a(x-y) \,dxdy\bigg|\leqslant C \max\big\{ h^{p_+-1}(U), h^{p_--1}(U)\big\}h(V)
\end{equation}
holds with a constant $C$ independent of $U$  and $V.$

\item\label{lm5.3-3}  For all $\mathbf{U},\,\mathbf{V}\in \L(\Om),$ $\mathbf{U}\ne0,$ and all $u\in\mathbf{U},$ $v\in\mathbf{V}$
 the identity
    \begin{equation}\label{5.12}
    F(u+v)=F(u)+ \ell(\mathbf{U},\mathbf{V})+o(\|\mathbf{V}\|_{\L(\Om)}),\quad \|\mathbf{V}\|_{\L(\Om)}\to0,
        \end{equation}
    holds, 
    where $\ell(\mathbf{U},\mathbf{V})$ is the  functional
     \begin{equation}\label{5.7}
     \ell(\mathbf{U},\mathbf{V})=\int\limits_{\Om\times\Om} \vp'\big( |u(x)-u(y)|,x,y\big)\frac{\RE \big(u(x)-u(y)\big) \big(\overline{v(x)}-\overline{v(y)}\big)
     }{|u(x)-u(y)|}\,dxdy,
     \end{equation}
     on $\L(\Om)\times\L(\Om)$ independent of the choice of $u\in\mathbf{U},$ $v\in\mathbf{V}.$ The functional $\ell$ is additive and linear over real numbers in $\mathbf{V},$ in particular, it satisfies  (\ref{5.22}).
\end{enumerate}
\end{lemma}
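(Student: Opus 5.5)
For part~\ref{lm5.3-2} we reduce to a Young-type inequality, using the conjugate function $\vp_*$ and the bound (\ref{3.31}) from Lemma~\ref{lm:est}. By homogeneity we first normalize: put $\tilde U:=U/h(U)$ and $\tilde V:=V/h(V)$, so that $H(\tilde U)=H(\tilde V)=1$ by the analogue of (\ref{3.16}) for $h$. The next step is to relate $\vp'(|U|)$ to $\vp'(|\tilde U|)$. Condition~\ref{Diff} gives $\vp'(t)\le c_2\vp(t)/t$. Combined with (\ref{3.22a}), this yields
\begin{equation*}
\vp'(|U|)\le c_2\,\frac{\vp(|U|)}{|U|}\le c_2\b\max\{h^{p_+-1}(U),h^{p_--1}(U)\}\,\frac{\vp(|\tilde U|)}{|\tilde U|}.
\end{equation*}
Here one has to be slightly careful: (\ref{3.22a}) controls $\vp$ itself, and after dividing by $|U|=h(U)|\tilde U|$ the exponent drops by one, which is exactly the claimed factor.

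We then apply the Young inequality (\ref{2.27}) with $t=\vp(|\tilde U|)/|\tilde U|$ and $s=|\tilde V|$. Convexity together with $\vp(0)=0$ gives $\vp(\tau)/\tau\le\vp'(\tau)$, and therefore
\begin{equation*}
\vp_*\big(\vp(|\tilde U|)/|\tilde U|\big)\le\vp_*\big(\vp'(|\tilde U|)\big)\le(c_2-1)\vp(|\tilde U|)
\end{equation*}
by the monotonicity of $\vp_*$ and (\ref{3.31}). Integrating against $a(x-y)$ bounds the integral by $c_2\b\max\{\dots\}\,h(V)\big((c_2-1)H(\tilde U)+H(\tilde V)\big)=C\max\{\dots\}h(V)$, which is (\ref{5.15}).

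For part~\ref{lm5.3-3} we work pointwise with $U=u(x)-u(y)$ and $V=v(x)-v(y)$. The derivative of $\tau\mapsto\vp(|U+\tau V|)$ at $\tau=0$ is $\vp'(|U|)\RE(U\overline V)/|U|$ wherever $U\ne0$. On the set $\{U=0\}$ the contribution is $\vp(|V|)$; we have to show this is $o(h(V))$, and it is also the main obstacle below. By the mean value theorem,
\begin{equation*}
\vp(|U+V|)-\vp(|U|)-\vp'(|U|)\frac{\RE U\overline V}{|U|}=\int_0^1\Big(\vp'(|U+\tau V|)\frac{\RE (U+\tau V)\overline V}{|U+\tau V|}-\vp'(|U|)\frac{\RE U\overline V}{|U|}\Big)d\tau.
\end{equation*}
The goal is to show that the integral of this remainder over $\Om\times\Om$ against $a$ is $o(h(V))$. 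The finiteness and the independence of $\ell$ from the choice of representatives follow from part~\ref{lm5.3-2}, since $\ell$ depends only on $U,V$. Real linearity in $\mathbf V$ is evident from the formula, and (\ref{5.22}) follows. Note that the kernel $a(x-y)$, omitted in (\ref{5.7}), must be present.

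The hard step is the uniform smallness of the remainder; we plan a scaling plus dominated convergence argument. Write $V=\|V\|\,W$ with $h(W)=1$ and $\e=\|V\|$. The remainder divided by $\e$ is bounded by $\int_0^1|\vp'(|U+\tau\e W|)-\vp'(|U|)|\,|W|\,d\tau$ plus an angular term. We take $\vp'$ continuous, as provided by Lemma~\ref{lm:MonCon} together with Condition~\ref{Diff}; should that fail, we would use a.e.\ continuity of monotone derivatives. Since $W$ varies with $\e$, dominated convergence cannot be applied directly. We therefore argue by contradiction with a sequence $W_n$, $h(W_n)=1$. On the set where $|\e W|\le\eta|U|$ the integrand is small by uniform continuity of $\vp'$ on compacts in scaled form; here we use (\ref{3.22a}) and the doubling (\ref{3.29}) applied to $\vp'(t)\asymp\vp(t)/t$. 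On the complementary set $|U|<|\e W|/\eta$ we bound everything by $C\vp(\e|W|/\eta)/\e$. Using (\ref{3.22a}), this is at most $C\eta^{-p_+}\e^{p_--1}H(W)=o(1)$ as $\e\to0$. That treats the region near $\{U=0\}$ and yields (\ref{5.12}).
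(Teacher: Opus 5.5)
\textbf{Part 1.} Your argument is correct and takes a slightly different route from the paper. The paper first proves the H\"older inequality (\ref{6.25}) on $\cH\times\cH_*$. It then bounds $h_*(\vp'(|U|))$ through (\ref{3.31}), (\ref{3.17b}), (\ref{3.17c}), which relies on the $q_\pm$-growth of $\vp_*$. You instead move the scale from $U$ onto $\vp(|\tilde U|)/|\tilde U|$ using Condition~\ref{Diff} and (\ref{3.22a}), and then apply Young's inequality once. This needs only (\ref{3.31}) and the monotonicity of $\vp_*$. On $\{U=0\}$ use $\vp'(0)=0$, which follows from (\ref{3.28}).

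\textbf{Part 2: the gap.} There is a genuine gap at exactly the step you single out as hard. On $A=\{\e|W|\leqslant\eta|U|\}$ you claim that $|\vp'(|U+\tau\e W|)-\vp'(|U|)|$ is small by a uniform, scale-invariant continuity of $\vp'$ obtained from (\ref{3.22a}) and (\ref{3.29}). Those estimates give only comparability, $\vp'(s)\asymp\vp(r)/r$ for $s/r\in[1-\eta,1+\eta]$. They never give smallness of the difference. The continuity of $\vp'(\cdot,x,y)$ does hold, since a differentiable convex function is $C^1$. But its modulus depends on $(x,y)$ and on the scale $r=|U(x,y)|$, and \ref{Meas}--\ref{Diff} impose no uniformity.

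Here is an explicit counterexample to the uniformity. Take $\vp(t,x,y)=t^2+\rho_{n(x,y)}(t)$, where $\rho_n$ is convex and $C^1$, vanishes on $[0,1]$, and $\rho_n'$ rises from $0$ to $1$ on $[1,1+\frac1n]$ and equals $1$ afterwards. Then $t^2\leqslant\vp\leqslant\frac54t^2$ and $\vp(\frac{s+t}2)\leqslant\frac{\vp(s)+\vp(t)}2-\frac{(s-t)^2}4$. Hence all of \ref{Meas}--\ref{Diff} hold uniformly in $n$, with $p_\pm=2$, $c_2=3$, $\d=\e^2/5$. Yet $\vp'(1+\eta,x,y)-\vp'(1,x,y)\geqslant1$ wherever $n(x,y)\geqslant1/\eta$. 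A contradiction argument along a sequence $W_n$ does not repair this, because nothing controls $W_n$ pointwise.

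\textbf{The missing step.} You need to decouple the modulus from $W$. Put
\begin{equation*}
g_\eta(x,y):=\sup\big\{|\vp'(s,x,y)-\vp'(|U|,x,y)|:\ \big|s-|U|\big|\leqslant\eta|U|\big\}\quad\text{on }\{U\neq0\}.
\end{equation*}
This function has three useful properties:
\begin{itemize}
\item it does not depend on $W$;
\item $g_\eta\leqslant\vp'(2|U|)\leqslant C\vp(|U|)/|U|$;
\item $g_\eta\to0$ a.e.\ as $\eta\to0$.
\end{itemize}
On $A$ the main term is then at most
\begin{equation*}
\int g_\eta|W|a\,dxdy\leqslant 2h(W)h_*(g_\eta)=2h_*(g_\eta)
\end{equation*}
by (\ref{6.25}). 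It remains to show $h_*(g_\eta)\to0$. Fix $\l>0$. Then $\vp_*(g_\eta/\l)\to0$ a.e. It is dominated by $\vp_*(\vp'(K|U|))\leqslant(c_2-1)\vp(K|U|)$, where $K=K(\l)\geqslant1$ is chosen via $\vp'(Kr)\geqslant\b^{-1}K^{p_--1}\vp(r)/r$. Hence $H_*(g_\eta/\l)\leqslant1$ for small $\eta$, so $h_*(g_\eta)\leqslant\l$. Combined with your angular bound $C\eta$ (from part~1) and your estimate off $A$, this gives the uniform $o(\|\mathbf V\|_{\L(\Om)})$.

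\textbf{Comparison with the paper.} The paper simply combines the pointwise convergence (\ref{5.11}) with dominated convergence. That is legitimate only for a fixed direction, i.e.\ increment $t\mathbf V$ with $\mathbf V$ fixed and $t\to0$, where $\vp(|V|)+(c_2-1)\vp(|U|+|V|)$ is a fixed integrable majorant. This directional form is essentially what the proof of Theorem~\ref{th7} uses. Your version aims at the uniform statement that (\ref{5.12}) literally asserts, and for that the decoupling step is indispensable. Your remark that the factor $a(x-y)$ is missing from (\ref{5.7}) is correct.
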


\begin{proof}
We first prove an appropriate H\"older inequality following the lines of proof of Lemma~3.2.11 in \cite[Ch. 3, Sect. 3.2]{HH}.
Let $V\in\cH(\Om\times\Om)$ and $W\in \cH_*(\Om\times\Om)$ be arbitrary functions. Applying the Young inequality (\ref{2.27}), we obtain
\begin{equation*}
\int\limits_{\Om\times\Om} \frac{|V(x,y)|}{h(V)} \frac{|W(x,y)|}{h_*(W)} a(x-y)   \,dxdy \leqslant H\left(\frac{V}{h(V)}\right)+H_*\left(\frac{W}{h_*(W)}\right)=2.
\end{equation*}
Multiplying this inequality by $h(V)h_*(W),$ we get
\begin{equation}\label{6.25}
\Bigg|\int\limits_{\Om\times\Om} W(x,y) \overline {V(x,y)}a(x-y)  \,dxdy
\Bigg|\leqslant 2 h(V) h_*(W).
\end{equation}
For each $U\in \cH(\Om\times\Om)$  the inequality~(\ref{3.31}) implies
\begin{equation*}
\vp_*\big(\vp'\big(|U(x,y)|,x,y\big),x,y\big)\leqslant (c_2-1)\vp\big(|U(x,y)|,x,y\big),
\end{equation*}
and hence,
\begin{equation}\label{6.26}
H_*\big(\vp'(|U|,\,\cdot\,)\big)\leqslant (c_2-1)H(U).
\end{equation}
This yields that the function $\vp'(|U|,\,\cdot\,)$ belongs to $\cH_*(\Om\times\Om).$
Using now the inequalities (\ref{3.17b}) and (\ref{3.17c}) to estimate the left and right hand sides of the above inequality, we find
\begin{equation*}
\b_*^{-1} \min\big\{h_*^{q_-}(\vp'(|U|,\,\cdot\,)), h_*^{q_+}(\vp'(|U|,\,\cdot\,))\big\}
\leqslant  \b (c_2-1) \max\big\{h^{p_-}(U), h^{p_+}(U)\big\},
\end{equation*}
where $\b_*$ is the constant from the inequalities (\ref{2.25}) associated with $\vp_*.$
Therefore,
\begin{equation}\label{6.27}
\begin{aligned}
h_*(\vp'(|U|,\,\cdot\,))\leqslant & C \max\Big\{h^{\frac{p_-}{q_-}}(U), h^{\frac{p_+}{q_-}}(U), h^{\frac{p_-}{q_+}}(U),  h^{\frac{p_+}{q_+}}(U)\Big\}
\\
=& C\max\big\{h^{p_--1}(U), h^{p_+-1}(U)\big\},
\end{aligned}
\end{equation}
where $C$ is a constant independent of $U.$
Applying now the inequality (\ref{6.25}) with $W=\vp'(|U|,\,\cdot\,),$ we arrive at (\ref{5.15}).

We proceed to proving (\ref{5.12}). Without loss of generality we suppose that $\|\mathbf{U}\|_{\L(\Om)}=1.$   Given $u=u(x),$ $v=v(x),$ $u\in\mathbf{U},$ $v\in\mathbf{V},$ we denote
\begin{equation*}
U=U(x,y):=u(x)-u(y), \qquad V=V(x,y):=\frac{v(x)-v(y)}{\|V\|_{\L(\Om)}}, \qquad t:=\|V\|_{\L(\Om)}.
\end{equation*}
We suppose that $t\to0$ and hence, $V\to 0$ almost everywhere on $\Om\times\Om.$
By the differentiability of function $\vp$ stated in Lemma~\ref{lm:MonCon}  the convergence
\begin{equation}
\frac{\vp\big(|U(x,y)+tV(x,y)|,x,y\big)-\vp\big(|U(x,y)|,x,y\big)}{t} -
\vp'(|U(x,y)|,x,y) \frac{\RE (\overline{U}(x,y) V(x,y))}{|U(x,y)|}\to 0 \label{5.11}
\end{equation}
holds almost everywhere in $\Om\times\Om$ as $t\to0.$ The same
 differentiability   allows us to apply the Lagrange's mean value theorem, which gives
\begin{align*}
\frac{1}{t}&\Big(\vp\big(|U(x,y) +t V(x,y)|,x,y\big)-\vp\big(|U(x,y)|,x,y\big)\Big)
\\
&=
\vp'\big(|U(x,y) +\tau V(x,y)|,x,y\big) \frac{\RE (U(x,y)+\tau V(x,y)) \overline{V(x,y)}}{|U(x,y)+\tau V(x,y)|}
\end{align*}
for almost all $(x,y)\in\Om\times\Om,$ where $\tau\in(0,t)$ is some intermediate value.
We then estimate the right hand side by means of the Young inequality (\ref{2.27}) and Condition~\ref{Diff}
\begin{align*}
\frac{1}{t}&\Big|\vp\big(|U(x,y) +t V(x,y)|,x,y\big)-\vp\big(|U(x,y)|,x,y\big)\Big|
\\
&\leqslant \vp'\big(|U(x,y) +\tau V(x,y)|,x,y\big) |V(x,y)|
\\
&\leqslant \vp(|V(x,y)|,x,y)+\vp_*\big(\vp'\big(|U(x,y) +\tau V(x,y)|,x,y\big),x,y\big)
\\
&\leqslant \vp(|V(x,y)|,x,y)+(c_2-1)\vp\big(|U(x,y) +\tau V(x,y)|,x,y\big).
\end{align*}
Hence, by (\ref{3.17a}),
\begin{align*}
\int\limits_{\Om\times\Om}& \frac{1}{t}\Big|\vp\big(|U(x,y) +t V(x,y)|,x,y\big)-\vp\big(|U(x,y)|,x,y\big)\Big|a(x-y)\,dxdy
\\
\leqslant  & F(v)+(c_2-1) F(u+\tau v) \leqslant \b \max\{|v|_{p,\Om}^{p_-},\,|v|_{p,\Om}^{p_-}\} + (c_2-1)\b \max\{|u+\tau v|_{p,\Om}^{p_-},\,|u+\tau v|_{p,\Om}^{p_-}\}
\\
\leqslant & C \Big(\max\big\{|u|_{p,\Om}^{p_-},\,|u|_{p,\Om}^{p_-}\big\} +\max\big\{|v|_{p,\Om}^{p_-},\,|v|_{p,\Om}^{p_-}\big\} \Big),
\end{align*}
where $C$ is some constant independent of $u,$ $v$ and $t.$ In the same way we   easily get \begin{equation*}
\int\limits_{\Om\times\Om} \vp'(|U(x,y)|,x,y) \frac{\RE (\overline{U}(x,y) V(x,y))}{|U(x,y)|} \,dxdy
 \leqslant C \Big(\max\big\{|u|_{p,\Om}^{p_-},\,|u|_{p,\Om}^{p_-}\big\} +\max\big\{|v|_{p,\Om}^{p_-},\,|v|_{p,\Om}^{p_-}\big\} \Big).
\end{equation*}
The two above estimates  and convergence (\ref{5.11}) allow us to apply the Lebesgue theorem on the dominated convergence, which gives
\begin{equation*}
\int\limits_{\Om\times\Om} \bigg(\frac{\vp\big(|U(x,y)+t V(x,y)|,x,y\big) -\vp(|U(x,y)|,x,y\big)}{t}  -\vp'\big(|U(x,y)|,x,y\big) \frac{\RE\overline{U(x,y)}V(x,y)}{|U(x,y)|} \bigg)\,dxdy\to 0
\end{equation*}
as $t\to0.$ This convergence implies  (\ref{5.12}), (\ref{5.7}).  The stated properties of the functional $\ell$ are obvious. The proof is complete.
\end{proof}

\subsection{Dual space for $\L(\Om)$}

In this subsection we prove Theorem~\ref{th7}. We begin with verifying the relations (\ref{5.21}), (\ref{5.1}), (\ref{5.2}). Let $\mathbf{U},\,\mathbf{V}\in\L(\Om)$ obey condition (\ref{5.21}) and $u\in\mathbf{U},$ $v\in\mathbf{V}.$ Without loss of generality we can suppose that $\|\mathbf{U}\|_{\L(\Om)}=1,$ and hence, $F(u)=1.$

Let
$\l_t:=|u+t v|_{p,\Om}$ obeys $\l_t\geqslant 1.$ We immediately get the estimate
\begin{equation*}
1\leqslant \l_t \leqslant |u|_{p,\Om}+t|v|_{p,\Om}=1+t|v|_{p,\Om}.
\end{equation*}
Hence,
\begin{equation}\label{5.16}
\frac{1}{\l_t}=1+\eta_t,\qquad \eta_t=O(t),\qquad t\to+0.
\end{equation}
According to (\ref{3.16}), the values $\l_t$ and $\eta_t$ satisfy the identity
\begin{equation}\label{5.17}
1=F\left(\frac{u+tv}{\l_t}\right)=F\big((u+tv)(1+\eta_t)\big).
\end{equation}
We rewrite this identity by means of the identity (\ref{5.12}) and the asymptotic estimate for $\eta_t$ in (\ref{5.16})
\begin{equation*}
1+\ell(\mathbf{U},t\mathbf{V}+\eta_t \mathbf{U})+o(t)=1,
\end{equation*}
and therefore,
\begin{equation*}
 \ell(\mathbf{U},t\mathbf{V})+\frac{\eta_t}{t}\ell(\mathbf{U},\mathbf{U})+o(1)=0,\qquad t\to+0.
\end{equation*}
Passing to the limit as $t\to+0,$ we see that
\begin{equation}\label{5.18}
\frac{\eta_t}{t}=-\frac{ \ell(\mathbf{U},t\mathbf{V})}{\ell(\mathbf{U},\mathbf{U})}+o(1),
\end{equation}
where the denominator
\begin{equation*}
\ell(\mathbf{U},\mathbf{U})=\int\limits_{\Om\times\Om} \vp'\big(|u(x)-u(y)|,x,y\big) |u(x)-u(y)|\,dxdy
\end{equation*}
is positive owing to the positivity of function $t\vp'(t,x,y)$ postulated in Condition~\ref{Diff}.  Substituting (\ref{5.18}) into the identity in (\ref{5.16}) and resolving the resulting relation with respect to  $\l_t,$ we find
\begin{equation}\label{5.19}
\l_t=1+t\frac{ \ell(\mathbf{U},t\mathbf{V})}{\ell(\mathbf{U},\mathbf{U})}+o(t),\qquad t\to+0,
\end{equation}
which yields (\ref{5.1}), (\ref{5.22}), (\ref{5.2}) for the norm $|\,\cdot\,|_{p,\Om}.$

Let $\phi$ be an arbitrary functional in the dual space $(\L(\Om))^*.$ The above established properties of the norm $|\,\cdot\,|_{p,\Om}$ allow us to apply Lemma~\ref{lm5.1} and conclude that there exists a unique element $\mathbf{W}\in \L(\Om)$ obeying (\ref{5.3}) and associated with the functional $\phi$ in the sense of identities (\ref{5.3}).  It is clear that
\begin{equation}\label{5.8}
\|\phi\|=\frac{\phi(\mathbf{W})}{|\mathbf{W}|_{p,\Om}}=
\sup\big\{|\phi(\mathbf{V})|:\, \mathbf{V}\in\L(\Om),\ |\mathbf{V}|_{p,\Om}=1\big\}.
\end{equation}

If for some $\mathbf{V}\in \L(\Om)$ we have $f(v)=1,$ $v\in\mathbf{V},$ then $|\mathbf{V}|_{p,\Om}=|v|_{p,\Om}\leqslant 1.$ At the same time,  owing to the monotonicity of $f\left(\frac{v}{\l}\right)$ in $\l,$
\begin{equation*}
f\left(\frac{v}{\l}\right)>f(v)=1 \quad\text{for} \quad \l<1.
\end{equation*}
Hence, if $f(v)=1,$ then $|v|_{p,\Om}=1.$ And as it follows from  (\ref{3.11}), if $|v|_{p,\Om}=1,$ then $f(v)=1.$ Therefore,
\begin{equation*}
\big\{v\in\cL(\Om):\ |v|_{p,\Om}=1\big\}= \big\{v\in\cL(\Om):\ f(v)=1\big\}
\end{equation*}
and we can rewrite (\ref{5.8}) as
\begin{equation}\label{5.9}
\|\phi\|=\sup\Big\{|\phi(\mathbf{V})|:\, \mathbf{V}\in\L(\Om), \  f(v)=1\ \text{for}\ v\in\mathbf{V} \Big\},
\end{equation}
and the supremum is attained at the unique element $\mathbf{V}=\mathrm{w}_0:=\frac{\mathbf{W}}{|\mathbf{W}|_{p,\Om}}.$

We choose an arbitrary $\mathbf{U}\in \cL(\Om)$ and   consider the functional
$\mathbf{U}\to |\phi(\mathbf{W}_0+\mathbf{U})|.$ Then the point $\mathbf{U}=0$  is the point of the conditional maximum of this function under the restriction is $f(w_0+u)=1,$ $w_0\in\mathbf{W}_0,$ $u\in\mathbf{U}.$ Therefore, by the Lagrange multiplier rule,
\begin{equation*}
\d\big(|\phi(\mathbf{W}_0+\mathbf{U})|+\mu f(w_0+u)\big) =0
\end{equation*}
for some $\mu.$ The variation of the first term reads
\begin{equation*}
\d |\phi(\mathbf{W}_0+\mathbf{U})|=\RE \phi(\d \mathbf{U}),
\end{equation*}
while the derivative of the second term can be easily found by (\ref{5.12}), (\ref{5.7}) and it is equal to $\mu\ell(\mathbf{W}_0,\d \mathbf{U}).$ Hence,
\begin{equation*}
\RE \phi(\d \mathbf{U})=-\mu  \ell(\mathbf{W}_0,\d \mathbf{U}).
\end{equation*}
This is in fact the explicit formula for the action of the functional $\phi,$ that is,
\begin{equation*}
\RE \phi_0(u)=-\mu  \ell(w_0,u),\qquad u\in\cL(\Om).
\end{equation*}
Choosing $\mathbf{U}=\mathbf{W}_0,$ we find $\mu$
 and the final expression for $\RE\phi(\mathbf{U})$
 \begin{equation*}
 \mu=-\frac{\RE\phi(\mathbf{W}_0)}{\ell(\mathbf{W}_0,\mathbf{W}_0)}= - \frac{|\mathbf{W}_0|_{p,\Om}}{\ell(\mathbf{W}_0,\mathbf{W}_0)},\qquad
 \RE\phi(\mathbf{U})=\frac{|\mathbf{W}_0|_{p,\Om}}{\ell(\mathbf{W}_0,\mathbf{W}_0)}
 \ell(\mathbf{W}_0,\mathbf{U}).
 \end{equation*}
Replacing then $\mathbf{U}$ by $-\iu \mathbf{U},$ we get
\begin{align*}
\IM\phi(\mathbf{U})=\frac{|\mathbf{W}_0|_{p,\Om}} {\ell(\mathbf{W}_0,\mathbf{W}_0)}\int\limits_{\Om\times\Om}& \vp'
 \big(|w_0(x)-w_0(y)|,x,y\big)
\\
&\cdot\frac{\IM\Big(\big(\overline{w_0(x)} -\overline{w_0(y)}\big)
     \big(u(x)-u(y)\big)\Big)}{|w_0(x)-w_0(y)|}   a(x-y) \,dxdy,
\end{align*}
where $u\in\mathbf{U},$ $w_0\in\mathbf{W}_0.$
We multiply this identity by $\iu$ and sum with the above formula for $\RE\phi(\mathbf{U}).$ Recalling then the definition (\ref{5.7}) of the functional $\ell,$ we arrive at the formula (\ref{2.20}), (\ref{2.21}).

Given a function $w\in \cL(\Om),$  the function
\begin{equation*}
W(x,y):=\frac{|w|_{p,\Om}}
{\Phi\left(\frac{w}{|w|_{p,\Om}},
\frac{w}{|w|_{p,\Om}}\right)} \vp'\big(|w(x)-w(y)|,x,y\big)
\frac{\overline{w(x)}-\overline{w(y)}}{|w(x)-w(y)|}
\end{equation*}
is an element of the space $\cH_*(\Om\times\Om),$ see (\ref{6.26}), (\ref{6.27}). Hence, the functional $\phi$ defined by (\ref{2.20}) can be represented in the form (\ref{2.22}) with the above function $W.$ At the same time, thanks to the estimate (\ref{5.15}),
 each function $W\in \cH_*(\Om\times\Om)$ generates a linear functional on $\cL(\Om)$ by the formula (\ref{2.20}). If two functions $W_1,\,W_2\in\cH_*(\Om\times\Om)$ generate the same functional $\phi,$ then the difference $W_1-W_2$ generates  the zero functional. It is straightforward to confirm that the formula (\ref{2.20}) generates the zero functional if and only if the function $W$ obeys the condition (\ref{2.14}). This completes the proof.

\subsection{Dual space for $\cL(\Om)$}

In this subsection we study the dual space for $\cL(\Om)$ and prove Theorem~\ref{th8}. The proof is essentially based on the following statement: \textit{if $X$ and $Y$ are two Banach spaces continuously embedded into a Hausdorff topological vector space, then the intersection $X\cap Y$ equipped  with the norm $\|\,\cdot\,\|_X+\|\,\cdot\,\|_Y$ is a Banach space and the dual space for $X\cap Y$ is isomorphic to $(X^*\times Y^*)/Y_\bot,$ where $Y_\bot$ is the subspace of $X^*
\times Y^*$ consisting of the pairs $(\xi,\z)$ such that $\xi(\mathrm{x})+\z(\mathrm{y})=0$ for all $\mathrm{x}\in X,$ $\mathrm{y}\in Y.$}
The proof of this statement can be found, for instance, in \cite[Ch. 1, Sect. 3.2]{KPS}. The space $\cL(\Om)$ can also be treated as the intersection, see Equation~(\ref{2.24}), but only in certain sense. This is why we just briefly recover the proof of the aforementioned statement adapted for our spaces.

We first of all introduce the Cartesian product $\L(\Om)\times L_{p_-}(\Om);$ being equipped  with the norm  $\|\,\cdot\,\|_{\L(\Om)}+\|\,\cdot\,\|_{L_{p_-}(\Om)},$ it is  a Banach space and its dual space reads
\begin{equation*}
\big(\L(\Om)\times L_{p_-}(\Om)\big)^*=\L^*(\Om)\times L_{p_-}^*(\Om).
\end{equation*}
Each functional $(\xi,\z)\in \L^*(\Om)\times L_{p_-}^*(\Om)$ acts on an element $(\mathbf{U},v)\in \L(\Om)\times L_{p_-}(\Om)$ by the rule
\begin{equation*}
(\xi,\z)(\mathbf{U},v)=\xi(\mathbf{U})+\z(v).
\end{equation*}

In this space we introduce the subspace
\begin{equation*}
A:=\big\{(\mathbf{U},u)\in \L(\Om)\times L_{p_-}(\Om):\ u\in\mathbf{U}  \big\}.
\end{equation*}
The subspace $A$ is isometrically isomorphic to $\cL(\Om);$  the isomorphism is defined as
\begin{equation*}
T:\, \cL(\Om)\to A,\qquad Tu:=(\mathbf{U},u),\quad \mathbf{U}:=\{u+C,\ C\in\mathds{C}\}, \qquad T^{-1}(\mathbf{U},u)=u.
\end{equation*}
Then the dual spaces $(\cL(\Om))^*$ and $A^*$ are also isomorphic; the isomorphism reads as
\begin{equation*}
Q:\, (\cL(\Om))^*\to A^*,\qquad Q \xi:=\xi(T^{-1}\,\cdot\,)\qquad Q^{-1}\z=\z(T\,\cdot\,).
\end{equation*}

We then consider the space $A_\bot,$ which is the subspace in $\L^*(\Om)\times L_{p_-}^*(\Om)$ consisting of the functionals vanishing on $A,$ that is,
\begin{equation*}
  A_\bot:=\big\{(\xi,\z):\  \xi(\mathbf{U})+\z(u)=0\quad \text{for all}\quad (\mathbf{U},u)\in A\big\}.
\end{equation*}
The dual space for $A_\bot$ is isometrically isomorphic to the quotient space
\begin{equation*}
(\L^*(\Om)\times L_{p_-}^*(\Om))\, /\,  A_\bot:=\Big\{\big\{(\xi+ \xi_\bot,\z+\z_\bot):\ (\xi_\bot,\z_\bot)\in A_\bot\big\}:\ (\xi,\z)\in \L^*(\Om)\times L_{p_-}^*(\Om) \Big\}.
\end{equation*}
The isomorphism is defined as
\begin{align*}
&Z: \, (\L^*(\Om)\times L_{p_-}^*(\Om))\, /\,  A_\bot \to A^*,
\\
&Z\big\{(\xi+ \xi_\bot,\z+\z_\bot):\ (\xi_\bot,\z_\bot)\in A_\bot\big\}:=(\xi,\z),
\\
&Z^{-1}(\xi,\z):=\{(\xi+ \xi_\bot,\z+\z_\bot):\ (\xi_\bot,\z_\bot)\in A_\bot\big\},
\end{align*}
and this isomorphism is well--defined and is independent of the choice of the pair $(\xi,\z)$ owing to the definition of the subspace $A_\bot.$ Hence, the mapping $Z^{-1} Q$ is an isomorphism between the space $(\cL(\Om))^*$ and $(\L^*(\Om)\times L_{p_-}^*(\Om))\, /\,  A_\bot.$  Given an arbitrary coset
\begin{equation*}
\big\{(\xi+ \xi_\bot,\z+\z_\bot):\ (\xi_\bot,\z_\bot)\in A_\bot\big\}\in (\L^*(\Om)\times L_{p_-}^*(\Om))\, /\,  A_\bot,
\end{equation*}
we can define the corresponding functional on $\phi\in\cL(\Om)$ as
\begin{equation}\label{6.30}
\phi(u)=\xi(\mathbf{U})+\z(u),\qquad u\in \cL(\Om),
\end{equation}
where $\mathbf{U}\in\cL(\Om)$ is the coset generated by $u.$
Hence, each functional in $\cL(\Om)^*$ can be  represented in the form (\ref{6.30}), where $\xi\in(\L(\Om))^*$ and $\z\in (L_{p_-}(\Om))^*.$   The dual space $(\L(\Om))^*$ was described in the previous subsection, while
$(L_{p_-}(\Om))^*=L_{q_-}(\Om).$ Now the representation~(\ref{2.23}) follows from (\ref{6.30}).  The proof is complete.

\subsection{Bounded domain}

In this subsection we prove Theorem~\ref{th3}.
For each  $u\in\cL(\Om)$ we have the representation (\ref{2.12}) and  owing to the inequalities (\ref{2.13}), the space of all elements $u_{\bot,\Om}$ equipped  with the norm $|\cdot|_{p,\Om}$ can be identified with the space $\L(\Om).$ Hence, each bounded linear functional on the space of all elements $u_{\bot,\Om}$ can be represented in the form (\ref{2.20}), (\ref{2.21}).

We choose an arbitrary functional $\phi\in(\cL(\Om))^*$ and apply it to the representation (\ref{2.12})
\begin{equation}\label{5.23}
\phi(u)=\phi(u_\bot) + \la u\ra \phi(\mathds{1}),\qquad \mathds{1}(x)=1,\quad x\in\Om.
\end{equation}
The functional $\phi(u_\bot)$ can be represented in the form (\ref{2.20}), (\ref{2.21}), while the quantity $\phi(\mathds{1})$ is the value of $\phi$ on the constant function $\mathds{1},$   which is identically one on $\Om.$ Then the identity (\ref{5.23}) implies (\ref{2.10a}). The proof is complete.

\section{Class of functions satisfying conditions \ref{Meas}--\ref{Diff}}\label{sec classc1c5}
This section is aimed at proving  Theorems~\ref{l_sum_prod},~\ref{cor_pert} and  Lemma~\ref{lem_sec_deriv}.

\begin{proof}[Proof of Theorem \ref{l_sum_prod}]
It is straightforward to verify that the Conditions \ref{Meas}--\ref{Diff} hold for the sum $\vp(z,x,y)+\psi(z,x,y)$ and $\vp(z,x,y)b(x,y).$

For the product $\vp(z,x,y)\psi(z,x,y)$ and the composition $\vp(\psi(z,x,y),x,y)$ the Conditions~\ref{Meas}, \ref{Pconv}, \ref{Bound} and \ref{Diff}  are trivially satisfied. To justify the validity of the Condition~\ref{Conv} we take an arbitrary $\varepsilon>0$, and we choose $\delta=
\delta(\varepsilon)>0$ in such a way that the inequality \eqref{uniconv} holds both for $\vp(z,x,y)$ and for $\psi(z,x,y)$.
Then, using the inequality $$(a_1+a_2)(b_1+b_2)\leqslant 2(a_1b_1+a_2b_2)\quad\text{for}\quad  0<a_1<a_2,\quad 0<b_1<b_2,$$
we obtain
\begin{align*}
\vp\Big(\frac{t+s}2,x,y\Big)\psi\Big(\frac{t+s}2,x,y\Big)\leqslant &(1-\delta)^2\,\frac {\vp(s,x,y)+\vp(t,x,y)}2
\frac {\psi(s,x,y)+\psi(t,x,y)}2
\\
\leqslant&(1-\delta)^2\frac {\vp(s,x,y)\psi(s,x,y)+\vp(t,x,y)\psi(z,x,y)}2.
\end{align*}
Thus, the Condition \ref{Conv} holds for the product.

For each $\e>0$ we choose $\d=\d(\e)$ by the Condition~\ref{Conv}, and by this condition for the functions $\psi$ and $\phi$ we obtain
\begin{align*}
\vp\bigg(\psi\bigg(\frac{s+t}{2},x,y\bigg),x,y\bigg)\leqslant & \vp\bigg((1-\d)\frac{\psi(s,x,y)+\psi(t,x,y)}{2},x,y\bigg)
\\
\leqslant &
\frac{\vp\big((1-\d)\psi(s,x,y),x,y\big)+\vp\big((1-\d)\psi(t,x,y),x,y\big)}{2}.
\end{align*}
The convexity of the function $\vp$ yield
\begin{equation*}
\vp((1-\d)z,x,y)=\vp((1-\d)z+\d\cdot0,x,y)\leqslant (1-\d) \vp(z,x,y),
\end{equation*}
and this allows us to continue the above estimating
\begin{equation*}
\vp\bigg(\psi\bigg(\frac{s+t}{2},x,y\bigg),x,y\bigg)\leqslant   (1-\d)
\frac{\vp\big(\psi(s,x,y),x,y\big)+\vp\big(\psi(t,x,y),x,y\big)}{2}.
\end{equation*}
This proves the Condition~\ref{Conv} for the composition $\vp(\psi(z,x,y),x,y)$ and completes the proof.
\end{proof}

\begin{proof}[Proof of Lemma \ref{lem_sec_deriv}]
 Since $\vp(z,x,y)$ is twice continuously differentiable in the variable $z$, for all
positive $s$ and $t$ such that $0<s<t$ we have the representations
\begin{align*}
&
\vp(s,x,y)=\vp\Big(\frac{s+t}2,x,y\Big)-\vp'\Big(\frac{s+t}2,x,y\Big)\frac{t-s}2+
\frac18\vp''(\tilde{s},x,y)(t-s)^2, && s<\tilde{s}<\frac{s+t}2,
\\
&
\vp(t,x,y)=\vp\Big(\frac{s+t}2,x,y\Big)+\vp'\Big(\frac{s+t}2,x,y\Big)\frac{t-s}2+
\frac18\vp''(\tilde{t},x,y)(t-s)^2, &&  \frac{s+t}2<\tilde{t}<t,
\end{align*}
where $\tilde{s}$ and $\tilde{t}$ are some values.
Summing up these inequalities yields
\begin{align*}
\vp\Big(\frac{s+t}2,x,y\Big)=& \frac{\vp(s,x,y)+\vp(t,x,y)}2-\frac1{16}\vp''(\tilde{s},x,y)(t-s)^2-
\frac1{16}\vp''(\tilde{t},x,y)(t-s)^2
\\
\leqslant &
\frac{\vp(s,x,y)+\vp(t,x,y)}2-
\frac1{16}\vp''(\tilde{t},x,y)(t-s)^2
\end{align*}
Assuming that $t-s>\varepsilon t$ and considering inequality \eqref{secderbou}, we find
\begin{align*}
\vp\Big(\frac{s+t}2,x,y\Big)\leqslant & \frac{\vp(s,x,y)+\vp(t,x,y)}2-
\frac{1}{16}\vp''(\tilde{t},x,y)(t-s)^2
\\
\leqslant &
\frac{\vp(s,x,y)+\vp(t,x,y)}2-
\frac{c_7}{16}\vp(\tilde{t},x,y)\frac{(t-s)^2}{^(\tilde{t})^2}
\\
\leqslant &\frac{\vp(s,x,y)+\vp(t,x,y)}2-
\frac{c_7\varepsilon^2}{16}\vp(\tilde{t},x,y)
\end{align*}
According to \eqref{3.22a},
\begin{equation*}
\vp(\tilde{t},x,y)\geqslant \frac1{2^{p_+}\beta}\vp(t,x,y).
\end{equation*}
Therefore,
$$
\vp\Big(\frac{s+t}2,x,y\Big)\leqslant\frac{\vp(s,x,y)+\vp(t,x,y)}2-
\frac {c_7\varepsilon^2}{2^{4+p_+}\beta}\vp(t,x,y).
$$
This implies \eqref{uniconv} with $\delta= \frac{c_7\varepsilon^2}{\beta 2^{p_++5}\b}$. The proof is complete.
\end{proof}

\begin{proof}[Proof of Theorem~\ref{cor_pert}]
Since $\psi$
and $\vp$
satisfy Condition \ref{Meas}, the sum $\psi+\vp$
also does. The inequality (\ref{2.31}) for $\psi$ and the inequality (\ref{secderbou}) for $\vp$ imply immediately the inequality (\ref{secderbou}) for $\psi+\vp.$ By Lemma~\ref{lem_sec_deriv} this yields the Condition~\ref{Conv} for $\psi+\vp.$

It follows from  (\ref{2.32}), (\ref{2.31})
that
\begin{align*}
&|\psi'(z,x,y)|=\bigg|\psi'(0,x,y)+\int\limits_0^z\psi''(t,x,y)dt\bigg|\leqslant \int\limits_0^z|\psi''(t,x,y)|dt\leqslant c_8\vp'(z,x,y),
\\
&|\psi(z,x,y)|=\bigg|\psi(0,x,y)+\int\limits_0^z\psi'(t,x,y)dt\bigg|\leqslant \int\limits_0^z|\psi'(t,x,y)|dt\leqslant c_8\vp(z,x,y).
\end{align*}
Therefore,
\begin{equation*}
\vp(z,x,y)+\psi(z,x,y)\leqslant (1+c_8)\vp(z,x,y),\qquad
\vp'(z,x,y)+\psi'(z,x,y)\leqslant (1+c_8)\vp'(z,x,y)
\end{equation*}
which in turn imply that Conditions \ref{Pconv}--\ref{Diff} hold  for the sum $\vp+\psi.$
The proof is complete.
\end{proof}

\begin{proof}[Proof of Theorem~\ref{th2.11}]
The Conditions~\ref{Meas},~\ref{Pconv},~\ref{Bound},~\ref{Diff} are easily verified by direct calculations.  Using the estimates (\ref{secderbou}) and (\ref{2.33}), we obtain
\begin{align*}
(\psi(z,x,y)\vp(z,x,y))''=&\psi''(z,x,y)\vp(z,x,y)+2\psi'(z,x,y)\vp'(z,x,y)+ \psi(z,x,y)\vp''(z,x,y)
\\
\leqslant & 2\psi'(z,x,y)\vp'(z,x,y) -  c_9 z^{-1}\psi'(z,x,y)\vp(z,x,y)
\\
&+(c_7-c_{10}) z^{-2} \psi(z,x,y) \vp(z,x,y)
\\
=& 2z^{-1}\psi'(z,x,y)\vk(z,x,y) +(c_7-c_{10}) z^{-2} \psi(z,x,y) \vp(z,x,y),
\end{align*}
where
\begin{equation*}
\vk(z,x,y):=2z\vp'(z,x,y)-c_9 \vp(z,x,y).
\end{equation*}
The function $\vk(z,x,y)$ satisfies the identity $\vk(0,x,y)=0$ and the estimate
\begin{equation*}
\vk'(z,x,y)=2 z\vp''(z,x,y)+(2-c_9) \vp'(z,x,y)\geqslant 0,
\end{equation*}
and this is why $\vk$ is a non--negative function. Hence, we can continue the above estimating
\begin{equation*}
(\psi(z,x,y)\vp(z,x,y))'' \geqslant (c_7-c_{10}) z^{-2} \psi(z,x,y) \vp(z,x,y),
\end{equation*}
and by Lemma~\ref{lem_sec_deriv} the function $\psi(z,x,y)\vp(z,x,y)$ satisfies the Condition~\ref{Conv}. The proof is complete.
\end{proof}

\section*{Acknowledgments}

The authors thank R.N. Gumerov for useful discussions on general theory of dual spaces.

\end{document}